\newtheorem{theorem}{Theorem}[section]
\newtheorem{lemma}[theorem]{Lemma}
\newtheorem{claim}[theorem]{Claim}
\newtheorem{question}{Question}
\theoremstyle{definition}
\newtheorem{remark}[theorem]{Remark}
\title{Reconstruction of graph colourings}
\author{Yu. Demidovich, Ya. Panichkin, M. Zhukovskii}
\author{Yury Demidovich\thanks{AI Initiative, KAUST, Kingdom of Saudi Arabia; \texttt{yury.demidovich@kaust.edu.sa}} \,\,\,\, Yaroslav Panichkin\thanks{Moscow Institute of Physics and Technology, Russia; \texttt{panichkin.yak@gmail.com}} \,\,\,\,    Maksim Zhukovskii\thanks{Department of Computer Science, University of Sheffield, UK; \texttt{m.zhukovskii@sheffield.ac.uk} }
 }
\date{}
\begin{document}

\maketitle

\begin{abstract}

A $k$-deck of a (coloured) graph is a multiset of its induced $k$-vertex subgraphs. Given a graph $G$, when is it possible to reconstruct with high probability a uniformly random colouring of its vertices in $r$ colours from its $k$-deck? In this paper, we study this question for grids and random graphs. 

Reconstruction of random colourings of $d$-dimensional $n$-grids from the deck of their $k$-subgrids ($k\times\ldots\times k$ grids) is one of the most studied colour reconstruction questions. The 1-dimensional case is motivated by the problem of reconstructing DNA sequences from their `shotgunned' stretches. It was comprehensively studied and the above reconstruction question was completely answered in the '90s. In this paper, we get a very precise answer for higher $d$. For every $d\geq 2$ and every $r\geq 2$, we present an almost linear algorithm that reconstructs with high probability a random $r$-colouring
of vertices of a $d$-dimensional $n$-grid from the deck of all its $k$-subgrids for every $k\geq(d\log_r n)^{1/d}+1/d+\varepsilon$. We also prove that the random $r$-colouring is not reconstructible with high probability if $k\leq (d\log_r n)^{1/d}-\varepsilon$. This answers the question of Narayanan and Yap (that was asked for $d\geq 3$) on ``two-point concentration'' of the minimum $k$ so that $k$-subgrids determine the entire colouring.

Next, we prove that with high probability a uniformly random $r$-colouring of the vertices of a uniformly random graph $G(n,1/2)$ is reconstructible from its full $k$-deck if $k\geq 2\log_2 n+8$ and is not reconstructible with high probability if $k\leq\sqrt{2\log_2 n}$. We further show that the colour reconstruction algorithm for random graphs can be modified and used for graph reconstruction: we prove that with high probability $G(n,1/2)$ is reconstructible from its full $k$-deck if $k\geq 2\log_2 n+11$ (while it is not reconstructible with high probability if $k\leq 2\sqrt{\log_2 n}$).

\end{abstract}

%\thispagestyle{empty}

%\break

%\setcounter{page}{1}

\section{Introduction}

The problem of reconstructing global properties of discrete structures from their substructures (in particular, from `local' observations) is widely studied (see, e.g.,~\cite{Intro_ACKR,Intro_BHZ,Intro_BH,Intro_FT,Intro_Peb,Intro_RS}) and has applications, e.g., in graph isomorphism problem~\cite{Babai}, in DNA sequencing~\cite{DNA,DNA2,DNA3}, and recovering neural networks~\cite{Soudry}. One of the most famous open problems in this spirit is the reconstruction conjecture of Kelly and Ulam~\cite{K2,K1,Ulam}: any graph on $n\geq 3$ vertices can be reconstructed from the multiset of its unlabelled induced $(n-1)$-subgraphs. The inspiring recent work of Mossel and Ross~\cite{RM} renewed the interest to this topic by introducing the problem of graph shotgun assembly. Since then there has been extensive study on various shotgun assembly questions~\cite{Shotgun_BBN,Shotgun_BFM,Shotgun_GM,Shotgun_JKRS} and, in particular, shotgun assembly for vertex-colourings~\cite{DL,NP,PRS}. 

In this paper, we study the problem of reconstruction of random colourings of vertices of graphs from their (small) subgraphs. Let us formally state the question. Let $G$ be a simple graph on $n$ vertices, and $r\geq 2$, $k\geq 1$ be integers. Assign uniformly at random $r$ colours to the vertices of $G$: every vertex is coloured in one of $r$ colours uniformly at random independently of all the others. A {\it $k$-deck of $G$} is a multiset of (not necessarily all) its $k$-vertex (unlabelled) {\it coloured} induced subgraphs. 

\begin{question}
Is it possible to reconstruct the random $r$-colouring $C$ of $G$ from its given $k$-deck $\mathcal{D}$ with high probability\footnote{With probability approaching 1 as $n\to\infty$. In what follows, we will write simply `whp' for brevity.}? 
\label{question}
\end{question}

In other words, is it true that whp, for any {\it other} $r$-colouring $C'$ of $G$ (i.e. such that the $C$-coloured $G$ and  $C'$-coloured $G$ are not isomorphic), the multiset of all $C'$-coloured induced subgraphs of $G$ does not contain $\mathcal{D}$ as a submultiset? We address this question for $d$-dimensional grids, which is one of the most studied shotgun assembly questions, and random graphs.

\subsection{Grids}

Motivated by the problem of reconstructing DNA sequences from their `shotgunned' stretches, Arratia, Martin, Reinert and Waterman~\cite{DNA} and Dyer, Frieze and Suen~\cite{DNA2} answered Question~\ref{question} for a 1-dimensional $n$-lattice $G=P_n$ and the deck consisting of all subpaths of length $k$. If $k=2\log_r n-\omega(1)$, then whp (as $n\to\infty$) it is impossible to reconstruct the colouring of $G$; if $k=2\log_r n+\omega(1)$, then whp the colouring of $G$ is reconstructible. This is tight since for $k=2\log_r n+O(1)$ the limit probability of reconstructibility is non-trivial. 

Note that, for $k=\log_r n-\omega(1)$, which is less than a half of the reconstruction threshold $2\log_r n$, the non-reconstructibility statement is straightforward. Indeed, since the deck has exactly $n-k+1$ paths and there are at most $r^k$ differently coloured $k$-paths, we get that the number of different decks is at most ${n+r^k\choose r^k}$. On the other hand, the number of non-isomorphic colourings of $P_n$ is at least $\frac{1}{2}r^{n}$. Since $2{n+r^k\choose r^k}r^{-n}=o(1)$ whenever $k=\log_r n-\omega(1)$, we immediately get the 0-statement. Improving the lower bound for the threshold function for reconstructibility by a factor of 2 appears to be possible since whp there are non-overlapping $k$-subgrids $x_1,x_2,y_1,y_2$ such that intervals between $x_1,x_2$ and between $y_1,y_2$ do not overlap, and, for every $j\in\{1,2\}$, $x_j$ and $y_j$ are isomorphic. So the intervals between $x_1,x_2$ and between $y_1,y_2$ can be swapped.

%(the order is induced by the path $P_n$) such that $x_j$ and $y_j$ are isomorphic as coloured graphs for both $j=1$ and $j=2$, and so the intervals between $x_1,x_2$ and between $y_1,y_2$ can be swapped. 

Similar swaps are not possible whp in higher dimensions since they require the entire `frames' around swapped intervals to be isomorphic. On the other hand, a similar simple counting argument (see Section~\ref{sc:grids_proof_l}) can be applied to get the following 0-statement for the $d$-dimensional $n$-lattice 
$$
G=H^d_n:=\underbrace{P_n\Box\ldots\Box P_n}_{d}
$$ 
(as usual, $\Box$ denotes the cartesian product of graphs) and the deck consisting of all its $d$-dimensional $k$-subgrids: if $k=(d\log_r n)^{1/d}-\Omega(1)$, then whp the colouring of $G$ is not reconstructible. Quite surprisingly, in contrast to the 1-dimensional case, this bound appears to be sharp up to an additive constant term, as we explain in the next paragraph.

%Quite surprisingly, this bound appears to be sharp up to a constant additive term in contrast to 1-dimensional lattices, as we explain in the next paragraph.

In~\cite{RM}, Mossel and Ross conjectured that there exists a reconstructibility threshold, i.e.\ a function $k_{d,r}=k_{d,r}(n)$ such that, for every $\varepsilon>0$, the random $r$-colouring of $G$ is not reconstructible whp if $k<(1-\varepsilon)k_{d,r}$ and % the colouring of $G$ is
 reconstructible whp if $k>(1+\varepsilon)k_{d,r}$. Ding and Liu~\cite{DL} resolved the conjecture and proved that $k_{d,r}=(d\log_r n)^{1/d}$. Narayanan and Yap~\cite{NP} proved the ``two-point concentration'' for $d=2$: if $k\geq (2\log_r n)^{1/2}+\frac{3}{4}$, then the colouring of $G$ is reconstructible whp, and if $k< (2\log_r n)^{1/2}-\frac{1}{4}$, then the colouring of $G$ is not reconstructible whp. They noted that a modification of their argument may give reasonable bounds on the threshold, though to get ``two-points concentration'', a higher-dimensional generalisation of so called ``interface paths'' seems {\it necessary} to be developed. In this paper, we establish the ``two-point concentration'' in all dimensions $d\geq 2$, and present an almost linear algorithm that whp reconstructs a random colouring from the $k$-deck, for any $k$ above the threshold.

% that, for $k$ above the threshold, there exists an effective reconstruction algorithm (its running time is essentially linear and it successfully reconstructs the random colouring whp).

% "we establish 'two-point concentration' in all dimentions d>=2, and present an almost(?) linear algorithm that whp reconstructs a random colouring from a $k$-deck, for any $k$ above the threshold." ?

\begin{theorem}
Let $d\geq 2$ and $\varepsilon>0$.
\begin{enumerate}
    \item If $k\leq (d\log_r n)^{1/d}-\varepsilon$, then whp the uniformly random $r$-colouring of $H^d_n$ is not reconstructible.
    \item If $k\geq(d\log_r n)^{1/d}+\frac{1}{d}+\varepsilon$, then whp the uniformly random $r$-colouring of $H^d_n$ is reconstructible and there exists a linearithmic-time\footnote{The running time is $O(N\log N)$, where $N=(n-k+1)^d k^d$ is the input size.} algorithm that reconstructs the colouring of $H^d_n$.
\end{enumerate} 
\label{th:grids}
\end{theorem}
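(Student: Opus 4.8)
\medskip
\noindent\emph{Proof strategy.}

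\noindent\textbf{Part 1} is a counting bound. The full $k$-deck of $H^d_n$ is a multiset of $(n-k+1)^d$ coloured $k$-subgrids out of at most $r^{k^d}$ distinct ones, so the number of possible decks is at most $\big((n-k+1)^d+1\big)^{r^{k^d}}=2^{\,r^{k^d}\cdot O(d\log n)}$, whereas (since $|\mathrm{Aut}(H^d_n)|=2^dd!$) the number of isomorphism classes of $r$-colourings of $H^d_n$ is at least $r^{n^d}/(2^dd!)$. Expanding $\big((d\log_r n)^{1/d}-\varepsilon\big)^d$, the hypothesis $k\le (d\log_r n)^{1/d}-\varepsilon$ gives $k^d\le d\log_r n-\Omega\big((\log n)^{(d-1)/d}\big)$; since $d\ge2$ this saving tends to infinity, so $r^{k^d}=o\big(n^d/\log n\big)$, the number of decks is $2^{o(n^d)}$, and hence (by injectivity of the deck map on reconstructible colourings) at most $o\big(r^{n^d}/(2^dd!)\big)$ colourings are reconstructible. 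As the random colouring meets each isomorphism class with probability at most $2^dd!\,r^{-n^d}$, whp it is not reconstructible.

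\noindent\textbf{Part 2.} Fix $k\ge (d\log_r n)^{1/d}+\tfrac1d+\varepsilon$. I would organise the algorithm around one growing patch. Store the deck in a hash table keyed by coloured $k$-box, with multiplicities (recovering $n$ from $|\text{deck}|=(n-k+1)^d$ and the known $k$); pick an arbitrary deck element as a seed, place it in a virtual coordinate frame, and grow a box-shaped reconstructed region by repeatedly adjoining, in each of the $2d$ axis directions, a single new layer: to adjoin the layer just beyond an already known $(k-1)$-thick slab, look up in the deck a coloured $k$-box consistent with the slab whose multiplicity is not yet exhausted, check it is the \emph{unique} such continuation, place it, and decrement. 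Extension in a direction stops when no valid continuation exists — thereby locating a boundary face of $H^d_n$ — and the whole run stops once the region has side $n$ in every direction; the output colouring is determined only up to $\mathrm{Aut}(H^d_n)$, as it must be. This performs $O(n^d)$ steps with $O(k^d)$-sized lookups, yielding the claimed $O(N\log N)$ running time.

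\noindent Both the correctness of this procedure and plain reconstructibility rest on a structural statement: whp the random colouring $C$ contains no ``ambiguating configuration'', and any failure of reconstructibility or of a greedy step forces $(H^d_n,C)$ to contain one of a short explicit list of bad patterns. The decisive one — the source of the additive $\tfrac1d$ — is \emph{a $k$-box all $k$ of whose layers in some coordinate direction coincide}: near it the $(k-1)$-thick slab ``that layer repeated'' admits two legitimate continuations (the layer once more, from the box itself, and the true next layer), so that step is not forced. A uniformly random $k$-box has all its $e_i$-layers equal with probability exactly $r^{-(k^d-k^{d-1})}$ (it is then determined by one $k^{d-1}$-cell layer), so the expected number of such boxes is at most
\[
d\,(n-k+1)^d\, r^{-(k^d-k^{d-1})}\;\le\; d\cdot r^{\,d\log_r n-(k^d-k^{d-1})}.
\]
Since $x\mapsto x^d-x^{d-1}$ is increasing, a direct expansion at $k_0=(d\log_r n)^{1/d}+\tfrac1d+\varepsilon$ gives $k^d-k^{d-1}\ge k_0^d-k_0^{d-1}=d\log_r n+d\varepsilon\,(d\log_r n)^{(d-1)/d}(1-o(1))$, so this expectation is $o(1)$ and whp no such box occurs (for $k\le (d\log_r n)^{1/d}+\tfrac1d-\varepsilon$ such boxes do occur whp, so this route gives nothing sharper). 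The remaining local ambiguities — and $(k-1)$-thick slabs do repeat with genuinely distinct continuations, $\Theta(n^d)$ times near the threshold — should be \emph{benign}, creating no truly different deck-consistent colouring. To confirm this I would encode a hypothetical confusion $C\neq C'$ by a fixed-point-free matching of $k$-boxes over the $(k-1)$-neighbourhood of $\{v:\,C(v)\neq C'(v)\}$, extract from it a \emph{monotone interface hypersurface} — the $d$-dimensional analogue of the interface paths of~\cite{NP} — together with a repeat of the colouring on a $(k-1)$-thick collar of that hypersurface, and union-bound over such witnesses: a collar repeat costs $r^{-(k^d-O(k^{d-1}))}$, rigidity forces the minimal witness to have side $\Theta(k)$ in every direction, and the $e^{O(k^{d-1})}$ hypersurface shapes are then overwhelmed.

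\noindent The main obstacle is precisely this structural statement. Two points are delicate: (a) that the \emph{absence} of the listed bad patterns really certifies reconstructibility together with the correctness and termination of the greedy algorithm — in particular that the abundant coincidental repeats of $(k-1)$-thick slabs do no harm once deck multiplicities and the grid's exact shape are exploited; and (b) constructing the higher-dimensional interface hypersurfaces and carrying out the union bound sharply enough (all error terms $o(k^{d-1})$) that the threshold lands at $(d\log_r n)^{1/d}+\tfrac1d+\varepsilon$ rather than at a cruder constant (for $d=2$, the $+\tfrac34$ of~\cite{NP}). This is exactly the higher-dimensional generalisation of interface paths that~\cite{NP} flagged as necessary.
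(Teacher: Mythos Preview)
Your Part~1 is correct and is essentially the paper's own counting argument.

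For Part~2 there is a genuine gap, and your proposed route diverges from the paper's in a way that matters. Your algorithm extends by checking that the next $k$-box is the \emph{unique} deck element consistent with the current $(k-1)$-thick slab. But you yourself note that such slabs repeat with genuinely distinct continuations roughly $n^{d-o(1)}$ times near the threshold --- so your greedy procedure, as written, will reject whp. The all-layers-equal box you single out is not the obstruction to your algorithm; the obstruction is any pair $x\neq y$ with $x[\mathbf{e}]\cong y[-\mathbf{e}]$, and these are abundant. The paper handles this not by hoping the ambiguities are benign but by replacing single-step extensions with \emph{look-ahead} extensions: at each step it searches for a chain of $k$ consecutive fitting boxes and accepts only if the \emph{first} box in any such chain is uniquely determined. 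This is what makes the algorithm actually terminate correctly, and the linearithmic time bound relies on controlling the branching in these look-ahead trees.

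On the structural side, you propose to build higher-dimensional interface hypersurfaces generalising~\cite{NP}, and you correctly flag this as the main obstacle. The paper's point is precisely that this generalisation is \emph{not} needed. Its substitute is Lemma~\ref{lm:lattices_main}: in the auxiliary digraph $\mathcal{G}$ on $k$-subgrids (blue edges for lattice-adjacent subgrids, red edges for pairs whose facing $(k-1)$-slabs are isomorphic), there are whp no rainbow paths of length $\le 3k$ that return to the $G$-neighbourhood of their start. This single first-moment lemma simultaneously (i) forces any competing look-ahead chain to consist of pairwise non-overlapping pieces, which makes the union bound for Claim~\ref{cl:look_e} go through with the sharp $+\tfrac1d$ constant, and (ii) bounds the branching in look-ahead trees (Claim~\ref{cl:few_wrong_paths}), which is what gives the running time. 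Your interface-hypersurface plan might in principle be pushed through, but even for $d=2$ it gave~\cite{NP} only $+\tfrac34$, not $+\tfrac12$, so getting $+\tfrac1d$ out of it would require an additional idea you have not supplied; the rainbow-path lemma sidesteps the issue entirely.
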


%As we show in Section~\ref{sc:grids_time}, our algorithm  that reconstructs a uniformly random colouring whp is essentially linear: its running time is $\mathcal{O}(|V(G)|)$. The notation $\mathcal{O}(\cdot)$ means a linear function up to a polylogarithmic factor. 
We shall note that the linearithmic-time algorithm is a derandomised version of a randomised algorithm that reconstructs the colouring in linearithmic time %$\mathcal{O}(|V(G)|)$
whp (in product measure) as well. Though a straightforward derandomisation requires an extra $|V(G)|$-factor, we show that the colouring of the deck contains enough randomness to get input (pseudo)random bits from it --- see details in Section~\ref{sc:grids_time}. For the sake of simplicity and clarity of presentation we however present the randomised algorithm (instead of presenting its derandomised version straight away), and then explain why the entire proof of the algorithm's success works well for the (pseudo)random\footnote{It is actually random in the sense that it is a function of the random colouring $C$ and a labelling of the deck.} input. Let us also mention that our algorithm can be used to reconstruct randomly coloured tori $\mathbb{Z}_n^d$ in contrast to the algorithm of Ding and Liu~\cite{DL} --- see Remark~\ref{rmk:tori}.

Finally, note that in~\cite{DNA,DL,DNA2,NP} it is assumed that the orientation of $k$-subgrids is observed (i.e. there are exactly $r^{k^d}$ non-isomorphic coloured subgrids). For consistency with the previous study, in our proof of Theorem~\ref{th:grids} in Section~\ref{sc:grids_proof} we make the same assumption. Nevertheless, the `unoriented' case can be treated similarly. We discuss such a modification in Remark~\ref{rmk:unoriented}.

\paragraph{Proof strategy.}

Most steps of our randomised algorithm are similar to those in the randomised algorithm suggested by Narayanan and Yap (though they did not prove that it admits a derandomisation and that the successful run of this algorithm implies reconstruction in the strong sense --- the fact that the algorithm outputs the initial colouring does not necessarily imply that the colouring with the given deck is unique, see Section~\ref{sc:verification}): start from a random $k$-subgrid from the deck and then, at each step, extend the reconstructed lattice by a single subgrid that shares with the previous subgrid a $(k-1)\times k$ rectangle. Depending on the current position, there are three ways to do an extension: naive extension, corner extension and internal extension. Corner and internal extensions allow to reduce the probability that the algorithm rejects by looking ahead: find in the deck a bunch of subgrids that form a large enough rectangle that extends the previous subgrid. If there is a unique sequence of such subgrids, then the algorithm does not reject. The above mentioned ``interface paths'' allow to bound from above the probability of rejection for corner extensions. We manage to get rid of the corner extensions (and thus avoid a generalisation of the concept of ``interface path'') by showing, roughly, that it is not likely that a randomly coloured lattice contains a short cycle consisting of rectangles such that the `right-end' of every rectangle is isomorphic to the `left-end' of its successor in the cycle. Below, we state this claim. It is also worth mentioning that our reconstruction algorithm is slightly faster since in corners we ``look ahead'' only $k$ subgrids while the algorithm of Narayanan and Yap explores $k^2$ subgrids.

For convenience, let us assume that the vertices of $G$ are elements of $[n]^d$ that are adjacent whenever the distance between them is exactly 1. Let us label all $k$-subgrids of $G$ by their smallest element. In particular, for any two subgrids $x,y\in[n-k+1]^d$, there exists $\mathbf{w}\in\mathbb{Z}^d$ such that $x+\mathbf{w}=y$. For any $x\in[n-k+1]^d$ and $\mathbf{w}\in\mathbb{Z}^d$, we let $x[\mathbf{w}]=x\cap(x+\mathbf{w})$ to be the subgrid of $x$ consisting of vertices that also belong to $x+\mathbf{w}$. Set $\mathbf{e}=(1,0,\ldots,0)\in\mathbb{Z}^d$. Consider an auxiliary directed graph $\mathcal{G}$ on $[n-k+1]^d$ with blue and red edges defined as follows. Let us draw a blue edge from $x$ to $y$ if $x+\mathbf{e}=y$. Clearly, blue edges constitute a disjoint union of directed $(n-k+1)$-paths. We draw a red edge from $x$ to $y$ if $x[\mathbf{e}]$ is isomorphic to $y[-\mathbf{e}]$ (as a coloured graph) and there is no blue edge from $x$ to $y$. A {\it rainbow path} in $\mathcal{G}$ is a (directed) path comprising both blue and red edges. % may contain edges of any of the colours (in particular, it may contain edges of both colours).
 For $x\in[n-k+1]^d$, let $\mathcal{N}(x)$ be the {\it $G$-neighbourhood} of $x$, i.e. the set of all subgrids $y\in[n-k+1]^d$ that have common vertices with $x$ in $G$. 

\begin{lemma}
For any $k\geq(d\log_r n)^{1/d}+\frac{1}{d}+\varepsilon$ whp in $\mathcal{G}$ there are no rainbow paths $x_1x_2\ldots x_{\ell}$ with $\ell\leq 3k$ vertices such that $x_{\ell}\in\mathcal{N}(x_1)$.
\label{lm:lattices_main}
\end{lemma}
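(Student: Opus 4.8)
The plan is to bound the expected number of rainbow cycles (more precisely, rainbow paths $x_1\ldots x_\ell$ with $x_\ell\in\mathcal N(x_1)$) by a union bound over their combinatorial ``shape'', and show this expectation is $o(1)$. First I would set up the relevant parameters: a rainbow path alternates between \emph{blue runs} (maximal subpaths of consecutive blue edges, i.e.\ translations by multiples of $\mathbf e$) and \emph{red edges}. Since $\ell\le 3k$ and each subgrid has side $k$, the whole path stays inside a bounded region; in fact the condition $x_\ell\in\mathcal N(x_1)$ forces the path to ``come back'', so the displacement vectors of the red edges must roughly cancel the displacement of the blue runs. The key structural point is that a red edge from $u$ to $v$ with $u+\mathbf w=v$ imposes the constraint that the colouring restricted to the overlap $u[\mathbf w]$ equals (up to the relevant isometry) the colouring restricted to $v[-\mathbf w]$ — and crucially $|u[\mathbf w]\cap V(G)|$ is large: for a red edge the subgrids overlap in all but (at most) one ``hyperplane'' in some coordinate, so the overlap has at least $k^{d-1}(k-1)\ge k^d - k^{d-1}$ vertices. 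Hence each red edge is an event of probability at most $r^{-(k^d-k^{d-1})}$ (or a sum of such over the $O(k)$ possible shifts $\mathbf w$ and $O(1)$ isometries), and these events are \emph{independent across red edges whose overlaps are disjoint}.

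Next I would fix the shape of the rainbow path: the sequence of red-edge shift vectors $\mathbf w_1,\ldots,\mathbf w_s$ (where $s$ is the number of red edges, $1\le s\le \ell-1\le 3k$) together with the lengths of the intervening blue runs. The number of such shapes is at most polynomial in $k$: each $\mathbf w_i$ lies in a box of side $O(k)$ so there are $(O(k))^d$ choices, and there are $s\le 3k$ of them, and $(O(k))^d$ choices of starting vertex $x_1$, giving at most $k^{O(k)}$ shapes overall — but note $k^{O(k)} = e^{O(k\log k)}$, which we must beat with the probability bound $r^{-(k^d - k^{d-1})s}$. Since $k^d - k^{d-1} = k^{d-1}(k-1) \gg k\log k$ already for $s=1$ once $k\to\infty$ (which holds since $k\ge (d\log_r n)^{1/d}+\dots\to\infty$), each individual red edge contributes a probability factor that dominates the shape-counting cost. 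The remaining point is to multiply by $n^d$ for the choice of which vertex of $G$ the path passes near (to upgrade from ``a rainbow cycle exists at a fixed location'' to ``somewhere in $G$''): this costs a factor $n^d$, i.e.\ $e^{O(\log n)}$, and here we need $s\cdot(k^d - k^{d-1}) \ge (1+\Omega(1))\cdot d\log_r n / \log r$, equivalently $s(k^d-k^{d-1})\log r \ge (1+\Omega(1)) d\log n$. For $s\ge 2$ this is comfortable; the tight case is $s=1$, a single red edge, where we need $(k^d - k^{d-1})\log r > d\log n$, i.e.\ $k^d - k^{d-1} > d\log_r n$. This is exactly where the hypothesis $k\ge (d\log_r n)^{1/d} + \tfrac1d + \varepsilon$ is used: writing $k = t + \tfrac1d + \varepsilon$ with $t = (d\log_r n)^{1/d}$, a first-order expansion gives $k^d \ge t^d + d\,t^{d-1}(\tfrac1d+\varepsilon) = d\log_r n + t^{d-1}(1+d\varepsilon)$, while $k^{d-1} \le (1+o(1))t^{d-1}$, so $k^d - k^{d-1} \ge d\log_r n + t^{d-1}(d\varepsilon - o(1)) > d\log_r n$ for large $n$. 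So the $+\tfrac1d$ is precisely the correction needed to absorb the $k^{d-1}$ loss coming from the overlap being a face of ``thickness $k-1$'' rather than the full $k$.

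The main obstacle I anticipate is handling \emph{correlations} between red edges whose overlap regions intersect, and more generally making the union bound honest when the rainbow path self-intersects or when several red edges ``see'' overlapping sets of vertices — a naive product of probabilities is only valid for independent events. The remedy is to select, along the path, a maximal sub-collection of red edges with pairwise vertex-disjoint overlaps; a greedy argument shows that because each overlap has diameter $O(k)$ and there are at most $O(k)$ red edges, one can always extract $\Omega(s)$ (in the worst case at least one, and this is the case that matters) red edges whose defining events are mutually independent, and restrict the probability estimate to those. A second, more technical point: one must be careful that a ``red edge'' event is really the disjunction over the $O(k)$ admissible shift vectors and $O(1)$ coordinate isometries consistent with being a red (non-blue) edge in $\mathcal G$ — but since we have already fixed the shape (hence $\mathbf w_i$ and the isometry) before applying the probability bound, this is automatically accounted for and only costs us in the shape count, which we have shown is negligible. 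Finally one should double-check the degenerate short cases $\ell \le$ small constant and $s$ close to $\ell$ (many red edges, short blue runs), but these only make the probability smaller, so they cause no trouble.
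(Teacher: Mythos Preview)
Your overall plan (first moment plus Markov) and your identification of the critical inequality $k^d-k^{d-1}>d\log_r n$ are correct, but the argument rests on a misreading of what a \emph{red edge} is, and this invalidates both your enumeration and your independence analysis.

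A red edge from $x$ to $y$ in $\mathcal G$ is defined by the condition $x[\mathbf e]\cong y[-\mathbf e]$, where $\mathbf e=(1,0,\dots,0)$ is \emph{fixed}; it is \emph{not} a condition on the overlap $x\cap y$ determined by the displacement $\mathbf w=y-x$. In particular, $y$ is completely unconstrained in position: there are roughly $n^d$ possible targets for each red edge, not $(O(k))^d$. Your claims that ``the whole path stays inside a bounded region'' and that ``each $\mathbf w_i$ lies in a box of side $O(k)$'' are therefore false, and your shape count $k^{O(k)}$ undercounts by a factor of $n^{d(h-1)}$ when there are $h$ red edges. (Relatedly, your description of the constraint as ``the colouring restricted to $u[\mathbf w]$ equals the colouring restricted to $v[-\mathbf w]$'' is vacuous, since with $\mathbf w=v-u$ those two sets are literally the same vertex set $u\cap v$.) The constraint that actually has probability $r^{-(k^d-k^{d-1})}$ is that the right $(k-1)\times k\times\cdots\times k$ face of $x$ be colour-identical to the left face of $y$; you arrived at the right number for the wrong reason.

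Once you correct the counting to pay $n^d$ per red edge, you also need the full product $r^{-h(k^d-k^{d-1})}$ of probabilities, so extracting ``at least one'' independent red edge is no longer enough. The paper handles independence not by a greedy extraction but by restricting attention to \emph{inclusion-minimal} rainbow paths with $x_\ell\in\mathcal N(x_1)$: minimality forces subgrids lying on different blue runs to be pairwise disjoint in $G$, which makes all $h$ red-edge events genuinely independent. The union bound then reads, up to polynomial-in-$k$ factors, as $\sum_h (2k)^d\binom{\ell-1}{h}\bigl(n^d r^{-(k^d-k^{d-1})}\bigr)^h$, and the hypothesis on $k$ makes the bracketed quantity $o(1)$.
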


Another advantage of Lemma~\ref{lm:lattices_main} is that it makes it possible to prove the linearithmic time bound in Theorem~\ref{th:grids} --- see the proof of Claim~\ref{cl:few_wrong_paths} in Section~\ref{sc:grids_time}.\\

We prove Lemma~\ref{lm:lattices_main} in Section~\ref{sc:lm_proof}, and then describe an efficient algorithm of reconstructing colours and prove the 1-statement in Theorem~\ref{th:grids} in Section~\ref{sc:grids_proof_u}.

\subsection{Random graphs}
\label{intro:rg_colour}

For an arbitrary graph $G$, an answer on Question~\ref{question} strongly depends on the group of automorphisms of $G$ denoted by $\mathrm{Aut}(G)$. In particular, if $\mathrm{Aut}(G)=\mathrm{Sym}(V(G))$ is the full symmetric group (this happens if and only if $G$ is either complete or empty), then, obviously, {\it any} colouring of $G$ is reconstructible from the {\it full} 1-deck (i.e. from the set of all coloured vertices), since it is enough to know the total number of vertices coloured in each of the colours. On the other hand, if $G=G(n)$ is a sequence of asymmetric graphs ($\mathrm{Aut}(G)$ is trivial) on $[n]:=\{1,\ldots,n\}$, then the minimum $k$ such that a random $r$-colouring of $G$ is reconstructible whp from its full $k$-deck admits a non-trivial lower bound. Indeed, let $k<\sqrt{2\log_2 n}$. Since asymptotically almost all labelled graphs are asymmetric~\cite{ER}, there are at most $\frac{1+o(1)}{k!}2^{{k\choose 2}}$ graph isomorphism classes presented in the full $k$-deck, and then the number of non-isomorphic coloured graphs in the $k$-deck is at most $\frac{(er)^k}{k^k} 2^{{k\choose 2}}$ for large enough $k$. Therefore, the number of different decks is at most 
$$
F_k:=
{
{n\choose k}+
\left\lceil\frac{(er)^k}{k^k} 2^{{k\choose 2}}\right\rceil
\choose 
\left\lceil\frac{(er)^k}{k^k} 2^{{k\choose 2}}
\right\rceil}.
$$
On the other hand, the number of non-isomorphic colourings of $G$ is exactly $r^n$ due to asymmetry of $G$. Since $F_k r^{-n}=o(1)$, we get that whp the colouring of $G$ is not reconstructible.

There are asymmetric graphs such that their random colourings are not reconstructible from their full $k$-decks whp for a certain $k=(2-o(1))\log_r n$. Indeed, let us make $P_{n-1}=v_1\ldots v_{n-1}$ asymmetric by, say, adding $v_n$ adjacent to $v_{n-3}$ and $v_{n-2}$. But then the proof of non-reconstructibility is the same as for $P_n$: for $k<(2-\varepsilon)\log_r n$, whp we can find in our graph $k$-paths
$x_1,x_2,y_1,y_2$ such that paths $(x_1\ldots x_2)$ and $(y_1\ldots y_2)$ between $x_1,x_2$ and between $y_1,y_2$ respectively do not overlap, and $x_j\cong y_j$, $j\in\{1,2\}$.
%$x_1<x_2<y_1<y_2$ such that $x_j$ and $y_j$ are isomorphic, and distances between consecutive paths are large enough.
 Then, if we do the swap $(x_1\ldots x_2)\leftrightarrow (y_1\ldots y_2)$, the modified graph and the initial one would have equal full $k$-decks but would not be isomorphic. Though full $k$-decks contain disconnected subgraphs as well, they would not prevent the coincidence of the decks. % would be still equal . % disconnected $k$-subgraphs do not help).

%Almost all induced $k$-subgraphs of the obtained graph are disjoint unions of paths. But then this problem is roughly the same as the colour reconstruction of $P_n$ from all its subpaths of length at most $k$. It is easy to see that, as soon as a random colouring of $P_n$ is not reconstructible from the deck of its subpaths of a given length $2\log_r n-\omega(1)$ whp, it is also not reconstructible from the deck of all its subpath of length at most $2\log_r n-\omega(1)$. Indeed, if we can find $k$-paths $x_1<x_2<y_1<y_2$ such that $x_j$ and $y_j$ are isomorphic and do a swap, then the swapped graph and the initial path would have equal full $k$-decks.

Whenever $k=o(\log n)$, we do not have an example of an asymmetric $G$ such that its colouring is whp reconstructible from its full $k$-deck. Nevertheless, we manage to  prove that, for asymptotically almost all labelled graphs (and, thus, for asymptotically almost all labelled asymmetric graphs), their colourings are whp reconstructible from their $\lceil 2\log_2 n+8\rceil $-decks. As usual, we denote by $G(n,p)$ the binomial random graph with the probability of appearance of an edge equal to $p$. Everywhere below, when we deal with random colourings of random graphs, we consider the product measure.

\begin{theorem}
Let $r\geq 2$.
\begin{enumerate}
\item If $k\leq \sqrt{2\log_2 n}$, then, for any sequence of asymmetric graphs $G=G(n)$ on $[n]$, whp its random $r$-colouring is not reconstructible from its full $k$-deck.
\item If $k\geq 2\log_2 n+8$, then whp the random $r$-colouring of $G(n,1/2)$ is reconstructible from its full $k$-deck.
\end{enumerate}
\label{th:random_graphs_colour}
\end{theorem}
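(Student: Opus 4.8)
\textbf{Part 1} is exactly the first‑moment bound already carried out in the paragraph preceding the statement: when $k\le\sqrt{2\log_2 n}$ one has $2^{\binom k2}=n^{1+o(1)}$ and $(er/k)^k=n^{-o(1)}$, so $F_k=2^{o(n)}=o(r^n)$, and since the number of reconstructible colourings is at most the number of possible decks $F_k$, it is $o(r^n)$; thus whp $C$ is not reconstructible. So I concentrate on Part~2. As $G=G(n,1/2)$ is whp asymmetric, two colourings of $G$ with the same full $k$‑deck must be equal, so it suffices to exhibit a rule that, using $G$ and $\mathcal D:=\mathcal D(G,C)$ but not $C$, outputs $C$, and to check that every decision of the rule is forced by the pair $(G,\mathcal D)$. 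The rule first recovers $C$ on two small disjoint \emph{anchor sets}, then recovers the colour of every other vertex by a single look‑up in $\mathcal D$ against these anchors.

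\textbf{The two–anchor lemma.} Call a $k$‑subset $A$ \emph{good} if $G[A]$ is rigid and \emph{graph‑twin‑free} (no $S\neq A$ with $G[S]\cong G[A]$) and $G[A\setminus\{a\}]$ is rigid for every $a\in A$; call a pair of disjoint $k$‑subsets $(A_1,A_2)$ \emph{great} if both $A_i$ are good and for every $v\notin A_1\cup A_2$ it is \emph{not} the case that both ``$\exists u\ne v$ with $N(u)\cap A_1=N(v)\cap A_1$ and $C(u)\ne C(v)$'' and the same statement with $A_2$ hold, and moreover there is no ``exotic'' witness of the type described below. The plan is to show that whp a great pair exists, and in fact that a uniformly random disjoint pair is great with probability $1-o(1)$ (so existence follows by Markov on the number of non‑great pairs). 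Good‑ness of a fixed $A_i$, and absence of exotic witnesses, hold with probability $1-O(\operatorname{poly}(k)/n)$ by routine first‑moment estimates using $\Pr[G[S]\cong G[S']]\le\operatorname{poly}(k)\,2^{-\Omega(|S\triangle S'|\,k)}$ for $|S\triangle S'|=o(k)$. The delicate part is the joint condition, and this is exactly where $k\ge 2\log_2 n+8$ is needed: for a \emph{single} vertex $v$ and a fixed $k$‑set $A$, $\Pr[\exists u\colon N(u)\cap A=N(v)\cap A]\le n\,2^{-k}=\Theta(n^{-1})$, which is too large to union over the $n$ vertices; but for two \emph{disjoint} anchors the two collision events, after conditioning on the trace of $N(v)$ on $A_1\cup A_2$ and on $C$, depend on disjoint sets of edges of $G$, hence are essentially independent, so their conjunction has probability $O(n^{-2})$ and the union bound over $v$ costs only $o(1)$.

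\textbf{The reconstruction rule.} Fix a great pair $(A_1,A_2)$, which depends only on $G$. \emph{Anchors:} by graph‑twin‑freeness $\mathcal D$ contains exactly one element whose underlying graph is isomorphic to $G[A_i]$, namely $(G[A_i],C|_{A_i})$; since $G[A_i]$ is rigid, the unique isomorphism of this abstract graph onto the labelled graph $G[A_i]$ transports its colouring to $C|_{A_i}$, which is thereby learned. \emph{Extension:} for $v\notin A_1\cup A_2$, $i\in\{1,2\}$ and $a\in A_i$, scan $\mathcal D$ for all pairs $(D,\phi)$ with $D=(G[S],C|_S)$ a member of $\mathcal D$ and $\phi$ a graph isomorphism $D\to G[(A_i\setminus\{a\})\cup\{v\}]$ with $C(\phi^{-1}(w))=C(w)$ for all $w\in A_i\setminus\{a\}$, and record $C(\phi^{-1}(v))$. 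The genuine member ($S=(A_i\setminus\{a\})\cup\{v\}$, $\phi=\mathrm{id}$) always occurs and records $C(v)$. Any other recorded colour comes either from an automorphism of $G[(A_i\setminus\{a\})\cup\{v\}]$ — harmless, since the matching conditions force the colour to be constant along the $\phi$‑orbit of $v$, so the recorded value is again $C(v)$ — or from some $S\ne(A_i\setminus\{a\})\cup\{v\}$; the only such contribution not excluded by good‑ness of $A_i$ or negligibly rare (these ``exotic'' witnesses force $\Omega(k)$ coincidences beyond the factor $2^{-(k-1)}$ and altogether contribute $o(1)$) is $S=(A_i\setminus\{a\})\cup\{u\}$ with $N(u)\cap A_i=N(v)\cap A_i$ and $C(u)\ne C(v)$, recording $C(u)\ne C(v)$. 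Great‑ness forbids this last contribution for \emph{both} $i=1$ and $i=2$, so for some $(i,a)$ all recorded colours equal $C(v)$ and we output that value; doing this for every $v$ recovers $C$. Since every step is a function of $(G,\mathcal D)$ only, any $C'$ with $\mathcal D(G,C')=\mathcal D(G,C)$ is returned by the same rule, hence $C'=C$; with asymmetry of $G$ this is precisely the required reconstructibility.

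\textbf{Main obstacle.} The whole difficulty lies in the additive constant. A one‑anchor (or one‑witness‑set‑per‑vertex) implementation only reaches the threshold $2\log_2 n+\omega(1)$, because a single collision $N(u)\cap A=N(v)\cap A$ has probability $\Theta(n^{-2+o(1)})$ precisely at $k=2\log_2 n+O(1)$ while there are $\Theta(n^2)$ relevant pairs $(u,v)$; the two‑disjoint‑anchor trick squares this probability at the cost of a union bound over $n$ vertices only. Turning this into a proof requires (a) a clean treatment of the conditional independence of the two collision events, (b) arguing that a great pair exists even though $A_1,A_2$ are chosen as functions of $G$ (via the fact that almost all disjoint pairs are great), and (c) a careful accounting of all non‑near‑identity isomorphism witnesses so that the stray $\operatorname{poly}(k)$ and $k!$ factors are absorbed — and it is in this last bookkeeping that the precise value $8$ is calibrated.
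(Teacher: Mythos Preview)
Part~1 matches the paper. For Part~2 your approach is genuinely different. The paper anchors not on an arbitrary $k$-set but on a \emph{longest induced path} $P$ (whp of length $\ell^*\le 2\log_2 n+1.9$), and shows that whp some such $P$ has a colouring unlike that of every other longest induced path, is asymmetric as a coloured path, and admits no three outside vertices with identical $P$-neighbourhoods. This single coloured anchor already separates all but $o(n)$ vertices; the residual two-way ties are broken with auxiliary witnesses inside the same card. The constant $+8$ then falls out cleanly: the deepest step requires a card containing $P$ together with six further specified vertices, so $k\ge\ell^*+6$ suffices. Your two-disjoint-anchor device --- squaring the collision probability so that a union over $n$ vertices costs only $o(1)$ --- is a different and pleasant mechanism; if carried through it would produce its own additive constant, with no a priori reason to land on exactly $+8$.

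There is, however, a genuine gap in your uniqueness deduction. Your ``great'' condition carries the clause ``$C(u)\ne C(v)$'', so it depends on $C$; you nevertheless assert the pair ``depends only on $G$'' and conclude ``any $C'$ with $\mathcal D(G,C')=\mathcal D(G,C)$ is returned by the same rule, hence $C'=C$''. That inference fails: the rule outputs a single colouring $\hat C$, and you have argued $\hat C=C$ because the pair is great \emph{for $C$}; to force $C'=C$ you would also need $\hat C=C'$, i.e.\ greatness for $C'$, which is nowhere arranged. The fix is to drop the colour clause and demand merely that no $v$ has a neighbourhood collision in \emph{both} anchors --- a property of $G$ alone. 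Your own estimate $n\cdot(n\,2^{-k})^2=O(n^{-1})$ already bounds this $G$-event for one \emph{fixed} pair such as $\{1,\dots,k\},\{k+1,\dots,2k\}$ (and note that ``almost all pairs are great'' does not by itself let you select one as a function of $G$; what you are really using is vertex-symmetry of the distribution of $G(n,1/2)$). Once this $G$-event holds, the rule is correct for \emph{every} colouring, and uniqueness follows. As secondary matters: the ``exotic'' witnesses --- cards $S$ not of the form $(A_i\setminus\{a\})\cup\{u\}$, together with non-identity isomorphisms --- are promised rather than bounded, and no calculation in your sketch pins the additive constant to $8$. Your orbit argument that automorphisms of $G[(A_i\setminus\{a\})\cup\{v\}]$ are harmless is, on the other hand, correct.
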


The 0-statement is already proven. We prove the 1-statement in Section~\ref{sc:random_graphs_colour_proof} by introducing a colour reconstruction algorithm that actually does not use the knowledge about the input graph. Thus, a modification of this algorithm can be used for the problem of graph reconstruction that we discuss in the next section.

\paragraph{Proof strategy.} 

The colour reconstruction algorithm (Algorithm A) we introduce in Section~\ref{sc:random_graphs_colour_proof} heavily relies on the fact that whp in randomly coloured $G(n,1/2)$ there is an induced path of the maximum length such that its colouring differs from colourings of all the other induced paths. Thus, it is possible to find in the graphs from the deck such a unique induced path $P$ just by counting the number of graphs from the deck it belongs to. Moreover, we show that there are no three vertices in $G(n,1/2)$ that have exactly the same neighbourhoods in the unique path $P$ (or its image under a unique non-trivial automorphism, if it exists). It means that, in every graph $D$ from the deck that contains $P$, all vertices from $D\setminus P$ could be divided into equivalence classes of sizes at most 2 with respect to their neighbourhoods in $P$. Omitting some technical details, all that remains is to distinguish pairs of vertices in equivalence classes of size 2. This appears to be possible since whp in $G(n,1/2)$ differences of neighbourhoods for pairs of vertices are sufficiently large.

\subsection{Graph reconstruction}

The problem of reconstructing colours is closely related to the well-known {\it graph reconstruction} problem. The famous reconstruction conjecture of Kelly and Ulam~\cite{K2,K1,Ulam} asserts that every graph $G$ with $n\geq 3$ vertices can be reconstructed (up to isomorphism) from the multiset of all its unlabelled induced $(n-1)$-subgraphs. M\"{u}ller~\cite{Muller} and Bollob\'{a}s~\cite{Bol_reconstruction} showed that the conjecture holds for asymptotically almost all graphs, i.e. whp holds for the binomial random graph $G_n\sim G(n,1/2)$. %Moreover, Bollob\'{a}s proved that actually any three graphs from the full $(n-1)$-deck is enough to reconstruct $G_n\sim G(n,1/2)$ whp. 
%The {\it $\ell$-reconstruction number} $f_G(\ell)$ of a graph $G$ is the minimum size of an $(n-\ell)$-deck $\mathcal{D}$ that is sufficient to reconstruct $G$, i.e. there is no other graph such that its full $(n-\ell)$-deck contains $\mathcal{D}$ as a submultiset. The result of Bollob\'{a}s immediately implies that whp $f_{G_n}(1)=3$. 
%Finally,
Moreover, M\"{u}ller~\cite{Muller} and Spinoza and West~\cite{SW} proved that, for any $\varepsilon>0$ and any integer $k\geq (1+\varepsilon)\frac{n}{2}$, whp $G_n$ is reconstructible from its $k$-deck. Finally, it was proved by Pikhurko~\cite{Pikhurko} that the latter bound on the reconstruction threshold can be siginfincantly improved: for any $\varepsilon>0$ and any integer $k\geq (2+\varepsilon)\log_2n$, whp $G_n$ is reconstructible from its $k$-deck. Unfortunately, this result was never published. For a more complete survey on the graph reconstruction we refer a reader to~\cite{KW_survey}. %, and generalised the result of Bollob\'{a}s by showing that $f_{G_n}(n-k)\leq{n-k+2\choose 2}$ whp.\\

Let us now build a bridge between colour reconstruction and graph reconstruction.

\begin{lemma}
If, for any sequence of asymmetric graphs $G=G(n)$ on $[n]$, its uniformly random 2-colouring is reconstructible from the full (coloured) $k(n)$-deck with probability $o(n^{-1/2})$, then whp $G(n,1/2)$ is not reconstructible from its full (uncoloured) $k(2n)$-deck. 
\label{lm:reconstruction_bridge}
\end{lemma}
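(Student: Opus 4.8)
The plan is to build a bijection-with-multiplicity argument that converts colour-reconstruction failures into graph-reconstruction failures. Given a sequence of asymmetric graphs $G=G(n)$ on $[n]$, I would encode a $2$-colouring $C$ of $G$ as an uncoloured graph $\widehat{G}_C$ on $2n$ vertices as follows: take the vertex set $[n]\times\{0,1\}$, put a copy of $G$ on $[n]\times\{0\}$ and on $[n]\times\{1\}$ (so $(u,i)\sim(v,i)$ iff $uv\in E(G)$), and then add a ``colour-encoding'' attachment at each vertex $v$ that depends only on $C(v)$ — for instance, join $(v,0)$ to $(v,1)$ exactly when $C(v)=1$, or attach a small asymmetric gadget whose isomorphism type records $C(v)$. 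The key structural point to verify is that $\widehat{G}_C\cong\widehat{G}_{C'}$ as uncoloured graphs iff $C$ and $C'$ give isomorphic coloured copies of $G$; this uses the asymmetry of $G$ (plus asymmetry of the gadget) to pin down which vertices are the ``$[n]\times\{0\}$ copy'', so that any isomorphism of the $2n$-vertex graphs restricts to an automorphism of $G$, which must be trivial.

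The second ingredient is a \emph{deck translation}: I would show that the uncoloured $k(2n)$-deck of $\widehat{G}_C$ is determined by (a function of) the coloured $k(n)$-deck of the $C$-coloured $G$, and conversely that the coloured $k(n)$-deck can be \emph{read off} from the uncoloured $k(2n)$-deck. Intuitively, a $k$-vertex induced subgraph of $\widehat{G}_C$ that sits entirely inside one of the two copies of $G$ "looks like" a $k$-vertex induced subgraph of $G$ together with, for each vertex, the data of whether its gadget/cross-edge is present — i.e. exactly a coloured $k$-subgraph of $G$. Subgraphs straddling the two copies contribute extra deck elements, but these are themselves determined by smaller pieces of the coloured deck, so they add no new information. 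Thus reconstructing $\widehat{G}_C$ up to isomorphism from its uncoloured $k(2n)$-deck is \emph{equivalent} to reconstructing the $C$-coloured $G$ from its coloured $k(n)$-deck. Since the hypothesis says the latter succeeds with probability $o(n^{-1/2})$ for \emph{every} sequence of asymmetric $G$, it succeeds with probability $o(n^{-1/2})$ in particular for the (asymmetric whp) random graph $G(n,1/2)$.

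Finally I would run the union bound that gives the conclusion for $G(2n,1/2)\sim G(n,1/2)$. The graph $H\sim G(2n,1/2)$ is reconstructible from its $k(2n)$-deck only if there is \emph{no} other graph $H'$ on $2n$ vertices with the same deck; in particular, $H$ is \emph{non}-reconstructible whenever $H=\widehat{G}_C$ for some asymmetric $G$ on $[n]$ and some $2$-colouring $C$ such that the $C$-coloured $G$ is \emph{not} colour-reconstructible from its coloured $k(n)$-deck — because then some $C'\neq C$ yields the same coloured $k(n)$-deck, hence $\widehat{G}_{C'}\ncong\widehat{G}_C$ has the same uncoloured $k(2n)$-deck. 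So it suffices to show that with probability $1-o(1)$ the random graph $H\sim G(2n,1/2)$ arises this way. Condition on $H$ having the ``doubled'' form $\widehat{G}_C$: one checks that a constant fraction — indeed a $1-o(1)$ fraction once we absorb the low-probability non-asymmetry event — of such $H$ has $G$ asymmetric, and that, \emph{conditioned} on the doubled structure, the pair $(G,C)$ is distributed as (random asymmetric graph, uniform random $2$-colouring) up to negligible corrections. The number of colourings $C$ that are colour-reconstructible is, by hypothesis and linearity of expectation, at most $o(n^{-1/2})\cdot 2^n$ in expectation, so by Markov's inequality with probability $1-o(1)$ the particular $C$ we drew is \emph{not} reconstructible.

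The main obstacle I anticipate is the book-keeping around conditioning and the factor $n^{-1/2}$: the naive union bound over all $G$ and all $C$ would lose a factor like $2^{n^2}$, so one cannot afford to fix $G$ first. The right move is to set things up so that one samples $H\sim G(2n,1/2)$ directly, observe that $H$ has the doubled form $\widehat{G}_C$ only on a set of probability roughly $2^{-\binom{2n}{2}+\binom{n}{2}+O(n)}$ — which is tiny — so this approach as literally stated \emph{cannot} work, and instead the intended statement must be that we take $H\sim G(n,1/2)$, \emph{build} $\widehat{G}_C$ ourselves (so $\widehat{G}_C$ is a random variable on $2n$ vertices that is \emph{not} distributed as $G(2n,1/2)$), and the claim ``whp $G(n,1/2)$ is not reconstructible from its $k(2n)$-deck'' in the lemma is really shorthand for ``whp the graph $\widehat{G}_C$ we construct from $G(n,1/2)$ and a uniform colouring is not reconstructible.'' Reconciling the precise probabilistic model in the lemma statement with this construction — in particular tracking where the $o(n^{-1/2})$ hypothesis is exactly strong enough to beat a single factor of $n$ coming from a union bound over the $n$ possible "special vertices'' that witness asymmetry — is the delicate part; everything else is routine gadget-checking and deck-translation.
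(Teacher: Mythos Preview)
Your gadget construction goes in the wrong direction and, as you yourself notice, cannot be made to land on $G(2n,1/2)$ or $G(n,1/2)$: the graphs $\widehat{G}_C$ you build form an exponentially thin subset of graphs on $2n$ vertices, so no conditioning argument will make a random graph look like one of them with non-negligible probability. Your final paragraph then tries to reinterpret the lemma, but the statement means exactly what it says --- the conclusion concerns $G(n,1/2)$ on $n$ vertices and its $k(2n)$-deck --- so the speculation there is off track. The guess that $n^{-1/2}$ comes from a union bound over ``special vertices'' is also wrong.

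The paper's argument is essentially the reverse of yours and avoids gadgets entirely. Work by contradiction: suppose $G_n\sim G(n,1/2)$ is reconstructible from its $k=k(2n)$-deck with probability at least some $\delta>0$. Now take $G_{2n}\sim G(2n,1/2)$ and give it a uniformly random $2$-colouring. With probability $\Theta(n^{-1/2})$ (de Moivre--Laplace) the colouring is exactly balanced, and conditioned on that event the two colour classes $V_1,V_2$ induce independent copies of $G(n,1/2)$. The key observation is that the \emph{monochromatic} cards in the coloured $k$-deck of $G_{2n}$ are precisely the uncoloured $k$-decks of $G_{2n}[V_1]$ and $G_{2n}[V_2]$; so by the assumed graph-reconstructibility, with probability at least $\delta^2$ both halves are determined up to isomorphism. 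A separate (routine) argument shows that whp each $G_{2n}[V_i]$ is isomorphic to no other $n$-vertex induced subgraph of $G_{2n}$, so knowing the halves up to isomorphism pins down $V_1$ and $V_2$ exactly --- hence the colouring. Averaging, there exists a deterministic asymmetric graph on $2n$ vertices whose random $2$-colouring is reconstructible from its coloured $k(2n)$-deck with probability $\Omega(n^{-1/2})$, contradicting the hypothesis applied at $2n$.
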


Lemma~\ref{lm:reconstruction_bridge} is proven in Section~\ref{sc:bridge_lemma_proof}.\\

Set $k(n)=\left\lfloor \sqrt{2\log_2 n}\right\rfloor$. Note that the probability bound in the 0-statement in Theorem~\ref{th:random_graphs_colour} can be easily specified: since $F_{k(n)} 2^{-n}=o(n^{-1/2})$, we get that a random $2$-colouring of an asymmetric $G$ is reconstructible with probability $o(n^{-1/2})$. Then, by Lemma~\ref{lm:reconstruction_bridge}, we get that whp $G_n$ is not reconstructible from its full $k(2n)$-deck, i.e. whp $G_n$ is not reconstructible from the $\left\lfloor\sqrt{2\log_2 n+2}\right\rfloor$-deck.
Though, as we will see below, the usual counting argument applied directly to the reconstruction problem gives a slightly better lower bound for the reconstruction threshold, Lemma~\ref{lm:reconstruction_bridge} may be beneficial for the reconstruction threshold if one may improve the lower bound in Theorem~\ref{th:random_graphs_colour} by at least a factor of 2.

Let us apply the counting argument. Let $k\leq 2\sqrt{\log_2 n}$. As we mentioned in Section~\ref{intro:rg_colour}, there are at most $\frac{1+o(1)}{k!}2^{{k\choose 2}}$ graph isomorphism classes presented in the full $k$-deck, and then the number of different decks is at most 
$$
\tilde F_k:=
{
{n\choose k}+
\left\lceil (e/k)^k 2^{{k\choose 2}}\right\rceil
\choose 
\left\lceil (e/k)^k 2^{{k\choose 2}}
\right\rceil}.
$$
On the other hand, the number of unlabelled graphs is at least $\frac{1}{n!}2^{{n\choose 2}}$. Since $n!\tilde F_k 2^{-{n\choose 2}}=o(1/n!)$, we get that whp $G_n$ is not reconstructible. Moreover, in Section~\ref{sc:random_graphs_rec_proof}, we significantly improve the upper bound of M\"{u}ller and Spinoza and West on the reconstruction threshold which has been best known published result. Our upper bound is fairly close to the obtained lower bound.

\begin{theorem}
$\,$
\begin{enumerate}
\item If $k\leq 2\sqrt{\log_2 n}$, then whp $G(n,1/2)$ is not reconstructible from its full $k$-deck. 
\item If $k\geq 2\log_2 n+11$, then whp $G(n,1/2)$ is reconstructible from its full $k$-deck. %, and $f_{G_n}(n-k)\leq{n-k+2\choose 2}.$
\end{enumerate}
\label{th:random_graphs_rec}
\end{theorem}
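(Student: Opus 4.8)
The 0-statement is already established by the counting argument displayed just above the theorem statement, so the plan is to prove the 1-statement: whp $G_n\sim G(n,1/2)$ is reconstructible from its full $k$-deck when $k\geq 2\log_2 n+11$. The strategy is to \emph{reuse} Algorithm~A --- the colour reconstruction algorithm promised in Section~\ref{sc:random_graphs_colour_proof} --- after observing that it never looks at the edges of the host graph $G$, only at the (coloured) subgraphs in the deck; this is exactly the remark made right after Theorem~\ref{th:random_graphs_colour}. The bridge is the standard trick of encoding a graph on vertex set $[n]$ as a $2$-colouring of some larger fixed asymmetric graph, so that reconstructing the graph becomes reconstructing the colour. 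Concretely, I would first argue that whp $G_n$ itself is asymmetric (Erd\H{o}s--R\'enyi~\cite{ER}) and in fact ``richly asymmetric'': it has an induced path $P$ of length close to $2\log_2 n$ whose colouring-by-membership-pattern is unique among all induced paths of that length, and no three vertices share the same neighbourhood in $P$ --- these are precisely the structural facts the colour-reconstruction proof strategy uses, now applied to the random graph in the role of the coloured structure. The slight loss from $2\log_2n+8$ (colours) to $2\log_2 n+11$ (graphs) should come from (i) the factor-of-$2$ weakness discussed around Lemma~\ref{lm:reconstruction_bridge} not actually being invoked here --- instead one runs the algorithm directly --- and (ii) a few extra vertices needed to anchor the ``uncoloured'' information, i.e. to tell apart a vertex-pair in a size-$2$ equivalence class using only adjacency information rather than colour information.

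The key steps, in order, would be: (1) Show that whp $G_n$ has a \emph{canonical long induced path} $P$ with $|P|=\lceil 2\log_2 n\rceil - O(1)$ that is distinguishable in the $k$-deck simply by counting how many deck-members it embeds into, and that $P$ (together with its image under the unique nontrivial automorphism of the relevant substructure, if one exists) ``separates'' $V(G_n)$: for every pair of vertices $u\neq v$ outside $P$, their neighbourhoods in $P$ differ, with at most doubly-many exceptions, and those exceptional pairs are further separated by neighbourhoods elsewhere --- this is the union-bound / second-moment part and is essentially the random-graph analogue of the claims underlying Algorithm~A. (2) Given the deck $\mathcal{D}_k(G_n)$, run Algorithm~A (or its graph-reconstruction modification) to recover, for each vertex $w$, its neighbourhood profile against $P$; since $k\geq 2\log_2n+11 > |P| + (\text{slack})$, every $w$ appears jointly with all of $P$ in some $k$-subset whose induced subgraph lies in the deck, so the profile is readable. (3) Use the separation property from step~(1) to assign a label in $[n]$ to almost every vertex, and handle the $O(1)$ ambiguous pairs by exhibiting, whp, a third vertex (or short gadget) visible in the deck that distinguishes them --- here one uses that in $G(n,1/2)$ the symmetric difference of any two vertices' neighbourhoods has size $(1/2+o(1))n$, so ambiguities are broken with room to spare. (4) Once all vertices are labelled, every edge $uv$ is certified by finding any deck-member containing $\{u,v\}$ and reading off adjacency; assemble and output $G_n$.

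The main obstacle, I expect, is step~(1): making the ``unique canonical path + separation'' package work \emph{simultaneously} and with failure probability small enough that a union bound over all $\binom n2$ vertex-pairs (for separation) goes through, while keeping the required length $|P|$ small enough that $k=2\log_2n+11$ actually dominates $|P|$ plus the extra vertices needed for disambiguation. The delicate point is the tension between wanting $P$ long (so it is unique and separating) and wanting $k-|P|$ large enough to house an arbitrary extra vertex together with all of $P$ inside a single $k$-subgraph in the deck; threading the constant $11$ through this inequality, together with the constant losses from the second-moment estimates for path-uniqueness and from the ``no three equal neighbourhoods in $P$'' claim, is where the careful bookkeeping lives. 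A secondary subtlety is dealing with the (whp unique, possibly trivial) automorphism of the canonical substructure: if it is nontrivial one can only recover the colouring/graph up to that automorphism, but since we are reconstructing the genuine unlabelled graph $G_n$ this is harmless --- one must simply phrase the conclusion as reconstruction up to isomorphism, which is exactly what the reconstruction problem asks. Routine concentration inequalities (Chernoff for neighbourhood sizes, first/second moment for path counts) will fill in the estimates, so I would not grind through them in the plan.
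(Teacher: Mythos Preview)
Your overall direction is right --- reuse the machinery of Algorithm~A, anchor on a longest induced path, read off neighbourhood profiles --- but there is a genuine gap at step~(1), and it is precisely the point where the uncoloured problem diverges from the coloured one.

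In the coloured setting, Lemma~\ref{cl:uniqlue_coloured_path} furnishes an induced $\ell^*$-path whose \emph{colouring} is unlike that of any other $\ell^*$-path, so ``count how many deck members contain a copy'' isolates it. In the uncoloured setting every induced $\ell^*$-path is, as an unlabelled graph, just $P_{\ell^*}$, and there are typically many of them (indeed $X_{\ell^*}/\mathbb{E}X_{\ell^*}\stackrel{\mathbb{P}}{\to}1$ with $\mathbb{E}X_{\ell^*}$ possibly of order $n$). Counting embeddings therefore cannot single out one specific path, and your phrase ``colouring-by-membership-pattern'' does not supply a well-defined substitute. Your attribution of the $+3$ in the constant (from $8$ to $11$) to pair-disambiguation is also off: separating the size-$2$ equivalence classes is already handled inside Algorithm~A by the witness vertices $w,z$ in steps A6--A8, and that part uses no colours at all.

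The missing idea, which the paper supplies in Claim~\ref{cl:colouring_paths}, is to \emph{attach three external vertices} $u_1,u_2,u_3$ to $P$ so that the enlarged graph $G_0=G_n[V(P)\cup\{u_1,u_2,u_3\}]$ is (i) asymmetric, (ii) not isomorphic to $G_n[V(P')\cup\{u_1',u_2',u_3'\}]$ for any other $\ell^*$-path $P'$ sharing at most one vertex with $P$ (because for every such $P'$ some $u_i$ has no twin over $P'$), and (iii) each $u_i$ has a neighbourhood in $P$ matched by no other vertex. This $G_0$, not $P$ alone, is the anchor: step~B2 locates the subdeck of size $\binom{n-\ell-3}{k-\ell-3}$ of cards containing a copy of $G_0$, and then A3--A8 run verbatim with $G_0$ in place of the coloured $P$. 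The $+3$ in $k\geq 2\log_2 n+11=(2\log_2 n+8)+3$ is exactly the room for $u_1,u_2,u_3$; the requirement $\ell\leq k-9$ in step~B1 is $3$ (for the $u_i$) plus $6$ (for the six auxiliary vertices in A8) on top of $|P|$. Without this anchoring device your plan does not get past identifying which longest path to work with.
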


\paragraph{Proof strategy.}

Though the bound from~\cite{Pikhurko} is asymptotically close to our bound,
the reconstruction from the full $k$-deck for $k\geq (2+\varepsilon)\log_2 n$ is reasonably easier. Indeed, by the union bound, for every isomorphism class of graphs of size $k$ as above, whp $G_n$ does not contain a representative of this class, which is not the case for $k=2\log_2n +O(1)$. Fix a $k$-set and observe that whp it induces an asymmetric subgraph $H$ such that all its induced subgraphs of size at least $k/2+\varepsilon$ are non-isomorphic and asymmetric as well. From this it can be derived that there are no other subgraphs in $G_n$ isomorphic to $H$. Then we can extract a $k$-subgraph with such properties from the full $(k+2)$-deck. It is easy to see that it helps to distinguish between all the other vertices and reconstruct adjacencies between them. Removing the $\varepsilon$-term requires analysing specific {\it explicit} asymmetric maximum subgraphs in $G_n$. For that, we use a similar strategy as in the proof of the colour reconstruction (Theorem~\ref{th:random_graphs_colour}). Actually, the algorithm for colour reconstruction does not use the input graph $G$ and reconstructs $G$ simultaneously with its colouring. However, for the graph reconstruction we can not use colours to identify a specific maximum induced path. We overcome this by showing that there is a maximum induced path $P$ and a 3-tuple of vertices $\mathbf{u}$ such that  the neighbourhood of $\mathbf{u}$ in $P$ is unique, see details in Section~\ref{sc:random_graphs_rec_proof}.

%To prove that $G_n$ is reconstructible whp, we use a similar strategy as in the proof of the colour reconstruction (Theorem~\ref{th:random_graphs_colour}). Actually, the algorithm for colour reconstruction does not use the input graph $G$ and reconstructs $G$ simultaneously with its colouring. However, for the graph reconstruction we can not use colours to identify a specific maximum induced path. We overcome this by showing that there is a maximum induced path $P$ and a 3-tuple of vertices $\mathbf{u}$ such that  the neighbourhood of $\mathbf{u}$ in $P$ is unique, see details in Section~\ref{sc:random_graphs_rec_proof}.

\section{Proof of Theorem~\ref{th:grids}}
\label{sc:grids_proof}

We let $G=H_n^d$ to be a $d$-dimensional $n$-lattice, and let $\mathcal{D}_k(G)$ stand for the deck of randomly $r$-coloured $G$ comprising all its $k$-subgrids.

\subsection{Lower bound}
\label{sc:grids_proof_l}
Let $\varepsilon>0$ be a small constant. Consider any positive integer $k\leq (d\log_r n)^{1/d} - \varepsilon$. We shall prove that whp the random colouring $C$ of $G$ is not reconstructible from $\mathcal{D}_k(G)$. Note that
\begin{align}
    r^{k^d} \log_r (n^d) & \leq r^{\left((d\log_r n)^{1/d} - \varepsilon\right)^d}\log_r (n^d) \notag\\
    & = dr^{d\log_r n  -\varepsilon d(d\log_r n)^{(d-1)/d}(1+o(1))}\log_r n \notag\\
    & = dn^dr^{-\varepsilon d(d\log_r n)^{(d-1)/d}(1+o(1))}\log_r n \notag\\
    & =n^d \exp\left[-\Theta\left(\log^{1-1/d} n\right)%+\log\log n
    \right]=o(n^d)\label{eq:thm_thr_aux}.
\end{align}

Observe that the probability that the random colouring of the graph is reconstructible from its $k$-deck is at most
\begin{align*}        \sum_{\mathcal{D}}\mathbb{P}\left(\mathcal{D}_k\left(G\right) = \mathcal{D}\;\text{ and }\;C \text{ is reconstructible from }\mathcal{D}\right) & \leq \sum_{\mathcal{D}} \frac{1}{r^{n^d}}\\
    & = {r^{k^d}+(n-k+1)^d-1\choose (n-k+1)^d}r^{-n^d},
\end{align*}
where the sum is over all possible decks $\mathcal{D}$; the number of decks equals ${r^{k^d}+(n-k+1)^d-1\choose (n-k+1)^d}.$ Due to \eqref{eq:thm_thr_aux}, we get $r^{k^d}<(n-k+1)^d<n^d$. Therefore,
\begin{align*}
    {r^{k^d}+(n-k+1)^d-1\choose (n-k+1)^d}& \leq \frac{(2n^d)^{r^{k^d}}}{(r^{k^d})!} \leq\left(\frac{2en^d}{r^{k^d}}\right)^{r^{k^d}}
    \leq n^{d\cdot r^{k^d}}\\
    &=r^{r^{k^d}\log_r(n^d)}\stackrel{\eqref{eq:thm_thr_aux}}{=}r^{o(n^d)}=o(r^{n^d}).
\end{align*}
Thus, the probability of reconstruction approaches 0, concluding the 0-statement.

\subsection{Proof of Lemma~\ref{lm:lattices_main}}
\label{sc:lm_proof} 

%For a vertex $x$ of $\mathcal{G}$, let us define the extended neighbourhood $\mathcal{N}'(x)\supset\mathcal{N}(x)$ as the set of all subgrids that are at $\ell_{\infty}$-distance at most $k$ from $x$. Note that $|\mathcal{N}'(x)|\leq (4k)^d$. 

We shall prove that whp there are no {\it inclusion-minimal} rainbow paths $x_1\ldots x_{\ell}$ in $\mathcal{G}$ with $x_{\ell}\in\mathcal{N}(x_1)$ (minimality means that we exclude subpaths $x_i\ldots x_j$, where $x_j\in\mathcal{N}(x_i)$, $\ell\ge j>i\geq 1$). It would obviously imply the desired assertion. Let us bound the expected number of such paths. First we choose the number of vertices in the path $\ell\leq 3k$. Then we choose the number of red edges $h\leq\ell-1$ and their positions in ${\ell-1\choose h}$ ways. Let us bound the number of choices of $h+1$ blue subpaths (here we assume that a blue subpath may be empty, i.e. consisting of a single vertex). Since lengths of blue paths are already fixed, the number of choices of the first $h$ paths (all but the last one) is the number of choices of their first vertices, which is at most $n^{d h}$. As for the last path, the number of choices of its last vertex is at most $(2k)^d$, since it must belong to $\mathcal{N}(x_1).$ Let $x_{\nu}$ be the first vertex of the last path. %Choose $x_1,\ldots,x_{\ell-1}$ (at most $n^{d(\ell - 1)}$ possibilities).
Since we count inclusion-minimum paths, we may assume that all $x_i,x_j$, $i<j<\ell$, that are not along the same blue subpath, are disjoint in the $n$-lattice. It immediately implies that, for every $i\in[\ell-2]$ such that $x_i$ and $x_{i+1}$ are not blue-adjacent, the probability that there is a red edge between them in $\mathcal{G}$ equals % exist $\textbf{w}_{i},\textbf{w}_{i+1}\in\mathbb{Z}_d,$ such that $x_i[\textbf{w}_i]$ is isomorphic to $x_{i+1}[\textbf{w}_{i+1}],$ $i\in[\ell-2],$ is equal to
  $r^{-(k^d-k^{d-1})},$ and these events are independent (over $i$). 
Morever, for the same reason, we may assume that $x_{\ell}$ does not overlap with $x_2,\ldots,x_{\nu-1}$. So, even if $\nu=\ell$, we get that $x_{\nu}$ does not overlap with $x_2,\ldots,x_{\nu-1}$. Thus, the red adjacency of $x_{\nu-1}$ and $x_{\nu}$ is also independent of all the other red adjacencies and has the same probability $r^{-(k^d-k^{d-1})}$ (even when $\nu-1=1$ and $\nu=\ell$).

%are at $\ell_{\infty}$-distance more than $k$ from each other. In particular, it means that any subgrid that intersects with $x_i$ does not have common vertices with $x_j$ whenever $\{x_i,x_j\}$ is not a blue edge.  
%can consider that $x_{1},\ldots,x_{\ell - 1}$ do not intersect since otherwise we obtain a path with the end in the neighbourhood of the beginning.
% For every $i\in[\ell-2]$ such that $x_i$ and $x_{i+1}$ are not blue-adjacent, the probability that there is a red edge between them in $\mathcal{G}$ equals % exist $\textbf{w}_{i},\textbf{w}_{i+1}\in\mathbb{Z}_d,$ such that $x_i[\textbf{w}_i]$ is isomorphic to $x_{i+1}[\textbf{w}_{i+1}],$ $i\in[\ell-2],$ is equal to
%  $r^{-(k^d-k^{d-1})},$ and these events are independent (over $i$). 

%  It remains to estimate the probability that the last missing edge in the path is red. 
  
  %compute the expected number of $x_{\ell}$ subject to $(x_1,\ldots,x_{\ell-1})$. There are at most $(4k)^d$ possibilities to choose a $k$-subgrid that belongs to $\mathcal{N}'(x_1)$. Fix such an $x_{\ell}$. Assume first that $\{x_{\ell},x_{\ell-1}\}$ is blue.  

%  Note that $x_{\ell}$ may overlap with subgrids that belong to at most one blue subpath of $x_1\ldots x_{\ell-1}$. %  at most $2^d$ variants to choose the intersection subgraph with $x_{\ell-1}.$
%  Thus, the probability that there exist $\textbf{w}_{\ell - 1}, \textbf{w}_{\ell}$ such that $x_{\ell}[\textbf{w}_{\ell}]$ is isomorphic to $x_{\ell-1}[\textbf{w}_{\ell-1}]$ is equal to $r^{-(k^d-k^{d-1})}.$

Therefore, the expected number of rainbow paths $x_1\ldots x_{\ell}$ with $x_{\ell}\in\mathcal{N}(x_1)$ is at most
$$
 \sum_{\ell=1}^{3k}\sum_{h=1}^{\ell-1} {\ell-1\choose h} n^{dh} (2k)^d  r^{-h(k^d-k^{d-1})} \leq (2k)^d\sum_{\ell=1}^{3k}\sum_{h=1}^{\ell - 1}\left(3k n^{d}r^{-\left(k^d-k^{d-1}\right)}\right)^h.
$$
Letting $k_{\mathrm{th}} = (d\log_r n)^{1/d}$, we get that
\begin{equation}
 k n^{d} r^{-(k^d-k^{d-1})} \leq  
(k_{\mathrm{th}}+1/d+\varepsilon)n^d r^{-k_{\mathrm{th}}^d-\varepsilon dk_{\mathrm{th}}^{d-1}(1+o(1))}\to 0.
\label{eq:helpful}
\end{equation}
Thus, the expected number does not exceed %Therefore, the expected number of rainbow paths is at most
\begin{align*} 
 (2k)^d(3k)^3n^dr^{-(k^d-k^{d-1})}
 &\leq (3k)^{d+3}n^dr^{-(k^d-k^{d-1})}\\
 &\leq \left(3k_{th}+\frac{3}{d}+3\varepsilon\right)^{d+3}r^{-\varepsilon dk_{\mathrm{th}}^{d-1}(1+o(1))}\\
 &=r^{-\varepsilon dk_{\mathrm{th}}^{d-1}(1+o(1))}=o(1).
\end{align*}
It remains to apply Markov's inequality.

\subsection{Upper bound}
\label{sc:grids_proof_u}

In this section, we describe an efficient algorithm that reconstructs a random colouring of $G$ whp and prove the second part of Theorem~\ref{th:grids}.

\subsubsection{Overview}
\label{sc:overview}

We assume that the deck $\mathcal{D}=\mathcal{D}_k(G)$ is ordered in some way. As an intermediate step, we present a randomised algorithm whose outcome, for every sample of random bits, is affected only by the position in $G$ of the first $D\in\mathcal{D}$. Its straightforward derandomisation increases the running time by a factor of $O(n^d)$. Nevertheless, we show in Section~\ref{sc:grids_time} that there exists an efficient derandomisation preserving the linearithmic bound on the running time. Time complexity of the randomised algorithm is analysed in Section~\ref{sc:grids_time} as well.

Assume that an ordering of the deck is fixed as a result of sampling the random bits. Initially (step 0), the algorithm ``places'' the first subgrid from the deck and removes it from the deck. Further, at every iteration the algorithm ``places'' precisely one subgrid to a particular place by merging it with the previously reconstructed partial colouring and removes it from the deck. Note that the ``exploration path'' of the grid is fixed (at each step, the algorithm tries to place a card at specific place fixed in advance unless it reaches the border of the grid). Eventually, the algorithm either outputs a colouring of $G$ (we should still verify that it is the initial colouring, and that there is no other colouring with the same deck) or returns an `ambiguous' value. 

Assume that the algorithm finishes its work and the deck is empty. As we show in Section~\ref{sc:verification}, it means that the output colouring equals $C$ and that it is reconstructible whp. In order to show this, we run the same algorithm multiple times for all the remaining directions of the ``exploration path'' (there are constantly many). %These runs are needed to verify that the colouring is indeed unique.
If all the verification runs successfully output colourings of $G$ (and this is actually the case whp), then, as we show in Section~\ref{sc:verification}, it implies that all the colourings that the algorithm outputs coincide, and there is no other colouring of the grid with the same deck. In other words, the colouring is reconstructible. Let us underline that the verification runs are needed only to prove that $C$ is reconstructible, while the reconstruction algorithm itself requires only a single run.\\

In Section~\ref{sc:step-0}, we show that a uniformly random ordering of the deck (which is equivalent, in terms of the output, to a uniformly random choice of the first element in the deck since the algorithm is designed in a way so that, as soon as the first position in $G$ is fixed, the ``exploration path'' of the colouring of $G$ is fixed as well) results in successful runs of both the algorithm and its verification ``reflections'' whp. Thus, there exists a choice of the first subgrid in the deck so that the algorithm colours the entire grid under derandomisation: in order to reconstruct $G$ we choose elements of the deck on the role of the initial position one by one and run the algorithm until it outputs the entire colouring.

At the $i$th step of a single run, $i\geq 1$, the algorithm explores the remaining deck, finds a subgrid that fits the current position, and checks that either no other subgrid fits, or another suitable subgrid is wrong since its neighbourhood can not be reconstructed, see details in Section~\ref{sc:grid_algorithm}. If this is not the case, then the algorithm returns `ambiguous'. Otherwise, the algorithm ``places'' the detected unique subgrid to the current position, removes it from the deck, and switches to the next step. %If the verification reveals other cards that fit, then, as it was mentioned above, the algorithm starts it work from the beginning (we call this event an {\it `ambiguous' error}).

%, or removes the whole graph constructed and starts from the beginning by choosing the next subgraph in the $k$-deck as the initial element. Namely, at the iteration $i$ it explores the whole $k$-deck once to find the subgraph for the current position and to check that no other subgrid fits for this position (in some cases, for the neighbourhood of this position as well). If the verification reveals other cards that fit, then, as it was mentioned above, the algorithm starts it work from the beginning (we call this event an {\it `ambiguous' error}).

%Assume that the algorithm finishes its work and the deck is empty. As we noted above, it does not necessarily mean that a colouring with the given deck is unique.  

%Let us show that it means that  the initial colouring of $G$ is restored and hence it is reconstructible. Indeed, if there exists another colouring with the same deck, then the algorithm initiated at any subgrid would give up, since at some step it should find at least two subgrids (with their neighbourhoods) that fit the current position. %For every $k$-subgrid, the algorithm compares it to at most $k$ other subgrids. Therefore, the running time of the algorithm does not exceed $O(n^{3d} k^{d+1}).$

\subsubsection{Step 0 and derandomisation}
\label{sc:step-0}

Consider a uniformly random ordering of $\mathcal{D}$. Let $\mu$ be the first subgrid in this order. At step 0, the algorithm ``places'' this subgrid and removes it from $\mathcal{D}$.

We may consider $\mu$ as a uniformly random vertex of $\mathcal{G}$. For $x\in[n-k+1]^d,$ let $\mathcal{B}(x)$ be the event that $\mu$ is isomorphic to $x$ in the coloured $G$. The algorithm is designed in such a way (see Section~\ref{sc:grid_algorithm}) that as soon as the position of the first subgrid is fixed (i.e. subject to the event $\{\mu=x\}\wedge(\wedge_{y\neq x}\neg \mathcal{B}(y))$), the position of the $i$th subgrid for every $i\geq 1$ in $G$ is determined in a unique way. If $\mu=x$ and $\mathcal{B}(y)$ holds for some $y\neq x$, then the algorithm returns `ambiguous' and gives up.

%Suppose that event $\mathcal{B}_i$ occurs and none of the events $\mathcal{B}_j,$ $j\neq i,$ takes place. The algorithm is constructed in such a way that when the position of the first $k$-subgrid is fixed, the restoration process of the colouring is determined in a unique way.

%We prove that, whp, there exists a subgrid in the $k$-deck such that if it is placed first, the algorithm does not make a mistake. The proof is based on the probabilistic method. We assume that the first subgrid is chosen from the $k$-deck at random, and 

%In Sections~\ref{}, we prove that, for every $\delta\in(0,1),$ the measure of the set of coloured graphs $G$, such that the probability that the algotithm run from $\mu$ outputs a colouring (we call this event ``$\mu$ is successful'') is at least $\delta$, approaches $1.$ %By uniform distribution over the $k$-deck we imply uniform distribution over the set of positions of $k$-subgrids in $G,$ i.e., over $[n-k+1]^d.$

 We say that {\it $\mu$ is successful} if the
 algorithm, when ran from $\mu,$ outputs a colouring. We denote the random colouring of vertices of $G$ by $C$. %, and a $C'$-coloured version of $G$ by $G^{C'}$.
 In Sections~\ref{sc:naive_mistake},~\ref{sc:look-ahead_mistake}, we consider the product measure  %we can consider the direct product
 of $(C,\mu)$ and prove that whp $\mu$ is successful for $C$. Let us show that it implies {\it the existence} of a successful $x$ in the deck whp and consequently an efficient derandomisation. Fix $\delta\in(0,1)$ and suppose that $\mathcal{C}_{\delta}$ is the set of all colourings $C'$ of vertices of $G$ such that $\mathbb{P}(\mu\text{ is successful for } C')\leq\delta$. Note that a successful $x$ exists for every $C'\notin\mathcal{C}_{\delta}$. We get that $\mathbb{P}(C\in\mathcal{C}_{\delta})\to 0$. Indeed, if $\mathbb{P}(C\in\mathcal{C}_{\delta})>\varepsilon$ for some $\varepsilon>0$, then
$$
 \mathbb{P}(\mu\text{ is successful for }C)\leq\delta\mathbb{P}(C\in\mathcal{C}_{\delta})+1-\mathbb{P}(C\in\mathcal{C}_{\delta})\leq 1-\varepsilon(1-\delta)
$$
--- a contradiction.  Therefore, indeed whp (over the distribution of the colouring $C$) there exists a successful subgrid in the deck.\\

Let us observe that whp, the algorithm successfully performs the initial step, i.e., there is no other subgrid isomorphic to  $\mu$. Indeed, the probability of the opposite event is at most
$$
 \sum_x\sum_{y\neq x}\mathbb{P}(\mathcal{B}(y)\mid\mu=x)\mathbb{P}(\mu=x)=\sum_{y:\,y\neq x}\mathbb{P}(x\cong y)\leq \frac{n^d}{r^{k^d}}\to 0
$$
as needed.\\

%Subsequently, we provide a comprehensive definition of the algorithm and establish its correctness by proving that, whp, likewise, no errors occur at any other step. 

In Sections~\ref{sc:naive_mistake},~\ref{sc:look-ahead_mistake}, we assume that the position of $\mu$ is fixed --- all probabilities are subject to the event $\mu=x_0$, and all the bounds on such probabilities are uniform over the choice of $x_0$.

%Note that everywhere below we assume that the position $\mu$ of the first subgrid $x_1$ is fixed (it is correct, since probabilities $\mathbb{P}(\mu=j)$ are the same, and upper bounds on probabilities $\mathbb{P}(\text{there is no mistake at step $i$}\mid\mu=j)$ tend to $0$ uniformly with respect to $j$).\\

\subsubsection{The algorithm}
\label{sc:grid_algorithm}

Let us describe the $i$-th step of the algorithm, $i\geq 1$. By that moment, subgrids $\mu=x_0,x_1,\ldots,x_{i-1}$ are already ``placed''. Then $S_{i-1}=x_0\cup x_1\cup \ldots \cup x_{i-1}$ is the restored part of $G.$ The next subgrid $x_{i}$ the algorithm has to find is an {\it extension} of $S_{i-1},$ i.e., $|x_{i}\cap S_{i-1}|\geq k^d - k^{d-1}$. In particular, there exists $j\leq i-1$ and $\ell\in[d]$ such that $x_i=x_j+(-1)^{\delta}\mathbf{e}_{\ell}$ for a certain $\delta\in\{0,1\}$, where $\mathbf{e}_{\ell}=(\underbrace{0,...,0}_{\ell-1},1,\underbrace{0,...,0}_{d-\ell})$ is the {\it $\ell$-th generator} of the lattice $G$. Thus, for given $j(i),\delta(i),\ell(i)$ (described below), the algorithm has to find a subgrid $H$ in the remaining deck such that $S_{i-1}\cap x_i$ is isomorphic to the respective part of $H$, verify the uniqueness of $H$ (in some sense), and then place $H$ by setting $x_i\cong H$. Thus, for every step $i$, it remains to define $j(i),\delta(i),\ell(i)$ as well as the process of verification of the uniqueness of $H$.

%Let $v$ be a vertex in $S_{i-1},$ $\textbf{e}\in\{-1,1\}^d.$ We say that $x_i$ is an $\textbf{e}$-{\it extension} of $S_{i-1}$ at $v$ if, for every vector $\lambda=\left(\lambda_1e_1,\ldots,\lambda_de_d\right),$ where $\lambda_1,\ldots,\lambda_d\in\mathbb{Z}_{\geq 0},$ such that $v+\lambda\in S_{i-1},$ the colour of $v+\lambda$ in $S_{i-1}$ coincides with the colour of $\widetilde{v}$ in $x_i,$ where, for all $j\in[d],$ $\widetilde{v}_{j} = k - \lambda_j,$ if $e_j = -1,$ and $\widetilde{v}_{j} = \lambda_j + 1,$ if $e_j = 1.$ Informally, $S_i = x_1\ldots x_i$ is correctly defined, if the vertex $\widetilde{0}$ in $x_i$ is placed on the vertex $v$ in $S_{i-1},$ and the extension is performed in the direction of $\textbf{e}.$ 

We first introduce two types of the $i$-th step. %, i.e., there are two different possibilities for the step $i.$
The type of the $i$-th step is determined uniquely by $x_0$ and $i$. Note that the algorithm does not know $x_0$ (it only knows its colouring), so it determines the type of the step by $i$ and $S_{i-1}$. Suppose that $j(i),\delta(i),\ell(i)$ are already defined.
\begin{itemize}
    \item \textbf{Naive extension.} If there is only one extension $H$ in the remaining deck, then we set $x_i\cong H$, $S_i=x_0\cup x_1\ldots\cup x_i$, remove $H$ from the deck, and proceed to step $i+1.$ If there are at least two extensions in the deck, the algorithm outputs an `ambiguous' value and terminates. The algorithm is designed in such a way that for a valid input, there is always at least one extension available.
    \begin{figure}[h!]
	\centering
	\includegraphics[scale=0.9]{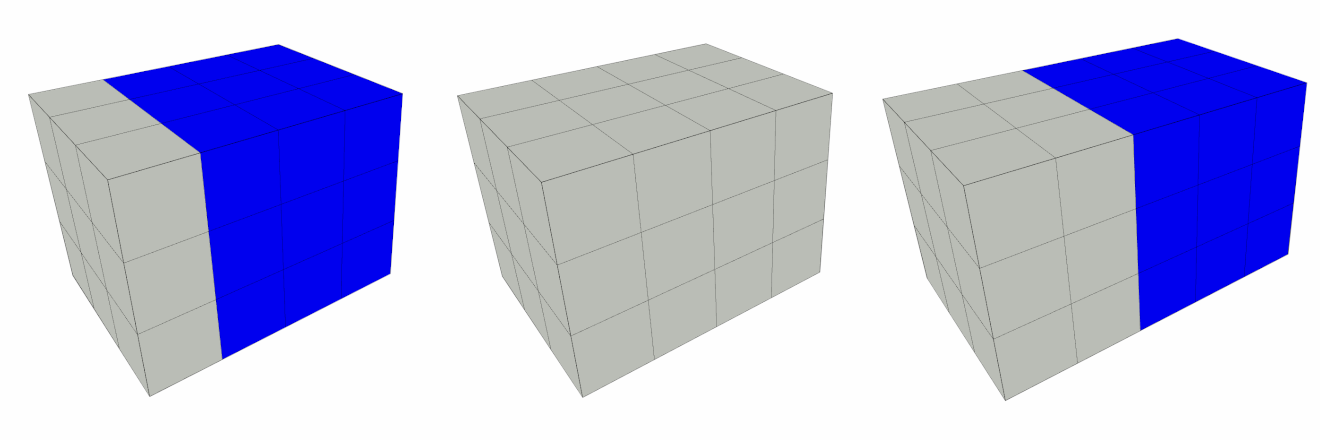}
	\caption{The first two naive extensions at steps 1 and 2; $d=3,$ $k=3,$ the extensions are blue.}
	\label{fig:im1}
\end{figure}
    \item \textbf{Look-ahead extension} requires another generator $\mathbf{e}_{\ell'}$ and another $\delta'\in\{0,1\}$ to be fixed. We will define it later. Let 
    $$
    y_1=x_i,\quad y_2=y_1+(-1)^{\delta'}\mathbf{e}_{\ell'},\quad\ldots,\quad y_k=y_{k-1}+(-1)^{\delta'}\mathbf{e}_{\ell'}.
    $$ 
    The algorithm is designed in a way that, for a valid input, there exist subgrids $H_1,\ldots,H_k$ in the remaining deck such that, for every $j\in[k]$, the intersection of $S_{i-1}$ extended by $H_1,\ldots, H_{j-1}$ with $y_j$ is isomorphic to the respective part of $H_j$. If there are two such tuples $(H_1,\ldots,H_k)$ {\bf with different $H_1$}, then the algorithm outputs `ambiguous'. Otherwise, we set $x_i\cong H_1$, $S_i=x_0\cup x_1\cup\ldots\cup x_i$, remove $H_1$ from the deck, and proceed to step $i+1.$   
%    If there exists only one such tuple $(H_1,\ldots,H_k)$ then $S_{i}:=S_{i-1}x_i$ and $x_i$ is removed from the deck. Otherwise, 
\end{itemize}

Let us finally describe the algorithm. As soon as $x_0$ is ``placed'', remove it from the deck and set $\mathcal{D}_1=\mathcal{D}\setminus \{x_0\}.$ Further, let $S_0=x_0,$ $\ell(0)=1$, $\delta(0)=0$.

\begin{enumerate}
    \item[G1]\label{item:step1} The first $2k$ steps are naive. At every step $i=1,\ldots,2k$, set $j(i)=i-1$, $\ell(i)=\ell(i-1)$, $\delta(i)=\delta(i-1)$ and check whether an extension $H$ with these parameters exists in $\mathcal{D}_i$. If it does not (it happens when we reach the border of the grid), then change the direction of exploration: $\delta(i):=1$, $j(i):=0$. Clearly, it is sufficient to do it only once. As soon as the parameters are fixed, apply the naive extension. If after $2k$ steps, the `ambiguous' value is not returned, then eventually a grid $S_{2k}$ of size $3k\times k \times\ldots\times k$ is restored, and the remaining deck $\mathcal{D}_{2k+1}$ has size $(n-k+1)^d-(2k+1).$
    
    \item[G2] Set $i=2k+1$, $j(i)=i-1$, $\ell(i)=2$, $\delta(i)=0$ and check whether a naive extension $H$ exists in $\mathcal{D}_i$. If it does not, then change the direction of exploration: $\delta(i):=1$. Then, apply the look-ahead extension with $\delta'(i)=2-\delta(2k)$, $\ell'(i)=1$ and switch to the step $i:=2k+2$. At this step, we apply a look-ahead extension to the second corner on the ``same side'' of $S_{2k}$. For convenience, let as assume that $x_0$ is the subgrid of $S_{2k}$ furtherst from $x_{2k}$. Set $j(i)=0$, $\ell(i)=2$, $\delta(i)=\delta(i-1)$, $\ell'(i)=1$, $\delta'(i)=\delta(1)$ and apply the look-ahead extension.      
%    For convenience, let us assume that $S_{3k}=[3k]\times[k]\times\ldots\times[k].$ Let $\textbf{e}=(-1,1,\ldots,1),$ $v=(3k, 2, 1, \ldots, 1)$, $\mathbf{e}'=-\delta_1.$ Before we perform look-ahead extensions to each of the corners, we validate the possibility to do them. It is sufficient to undertake the check of existence of the naive extension in the direction of $\textbf{e}$ at $v.$ In case one of them does not exist, we redefine $\textbf{e}=(-1,-1,1,\ldots,1),$ $v=(3k,k-1,1,\ldots,1).$ It is sufficient to do this redefinition only once. For convenience, further we assume that it does not happen.
%    After performing the first look-ahead extension in the corner, we obtain $S_{3k+1}.$ Further, similarly, for the second corner, we take $\textbf{e} = (1,1,\ldots,1),$ $v=(1,2,1,\ldots,1),$ $\textbf{e}'=\delta_1.$ 
We obtain $S_{2k+2}$. Finally, we apply $k$ look-ahead extensions to colour the vertices between the corners. For every $i=2k+3,\ldots,3k+2$, set $j(i)=i-1$, $\ell(i)=\ell'(i)=1$, $\delta(i)=\delta'(i)=\delta'(i-1)$ and apply the look-ahead extension. Eventually, we get a grid $S_{3k+2}$ of size $3k\times (k + 1)\times k\times\ldots\times k$.

Apply G2 (two look-ahead extensions in corners and $k$ internal look-ahead extensions) $n-k-1$ more times to recover a grid $S_{2k+(k+2)(n-k)}$ of size $3k\times n\times k\times\ldots\times k$.

%$\mathbf{e} = (1, 1, \ldots, 1)$, $x = (1+i, 2, 1, \ldots, 1)$, $\mathbf{e}'=\delta_1.$ The grid $S_{4k + 2}$ that we obtain has the size of $(3k)\times (k + 1)\times k\times\ldots\times k.$ In total we performed $k+2$ extensions. We apply this $k+2$ extensions until we obtain the grid of size $3k\times n\times k\times\ldots\times k.$
    \item[G3] Here, the algorithm restores a grid $n\times n\times k\times\ldots\times k$ in a similar way as above: $n-3k$ times apply a pair of look-ahead extensions with $\ell(i)=1$ to two corners, and then fill subgrids between the corners via $n-2k$ internal look-ahead extensions. %Apply two look-ahead extensions to two corners of $S_{3k+(k+2)(n-k)}:$ first, we put $\mathbf{e}=(1,\ldots,1)$, $x=(2k+2,1,\ldots,1),$ $\mathbf{e}'=\delta_2,$ and, further, let $\mathbf{e}=(-1,1,\ldots,1),$ $x=(2k+2,n,1,\ldots,1),$ $\mathbf{e}'=-\delta_2.$ Then we perform $n-2k$ look-ahead extensions to $S_{3k+(k+2)(n-k)+2}$ in order to fill $n-2k$ subgrids of size $1\times 1\times k\times\ldots\times k$ between the corners. We obtain the grid $S_{3k+(k+2)(n-k)+n-2k+2}$ of size $(3k+1)\times n\times k\times\ldots\times k.$ Performing the same sequence of extensions $n-3k-1$ times more, we obtain the grid of size $n\times n\times k\times\ldots\times k.$
    \item[G4] Apply four look-ahead extensions to four corners of the grid (see Figure~\ref{fig:im2}) and fill the ``frame'' between them in a similar way as in parts G2 and G3 of the algorithm (see Figure~\ref{fig:im3}). Further, in a similar way, fill the remaining $(n-2k)^2$ subgrids of size $1\times1\times 1\times k\times\ldots\times k$ within the frame by performing $(n-2k)^2$ look-ahead extensions (see Figure~\ref{fig:im4}). After applying these sequence of extensions $n-k$ times, we get a restored grid of size $n\times n\times n\times k\times\ldots\times k.$ The further process is similar. As soon as a grid of size $\underbrace{n\times\ldots\times n}_{s}\times \underbrace{k\times\ldots\times k}_{d-s}$ is restored, we apply $2^s$ look-ahead extensions to its corners and, further, restore the inner $n^s-(2k)^s$ subgrids.
\end{enumerate}
\begin{figure}
     \centering
     \begin{subfigure}[b]{0.4\textwidth}
         \centering
        \includegraphics[scale=0.4]
        {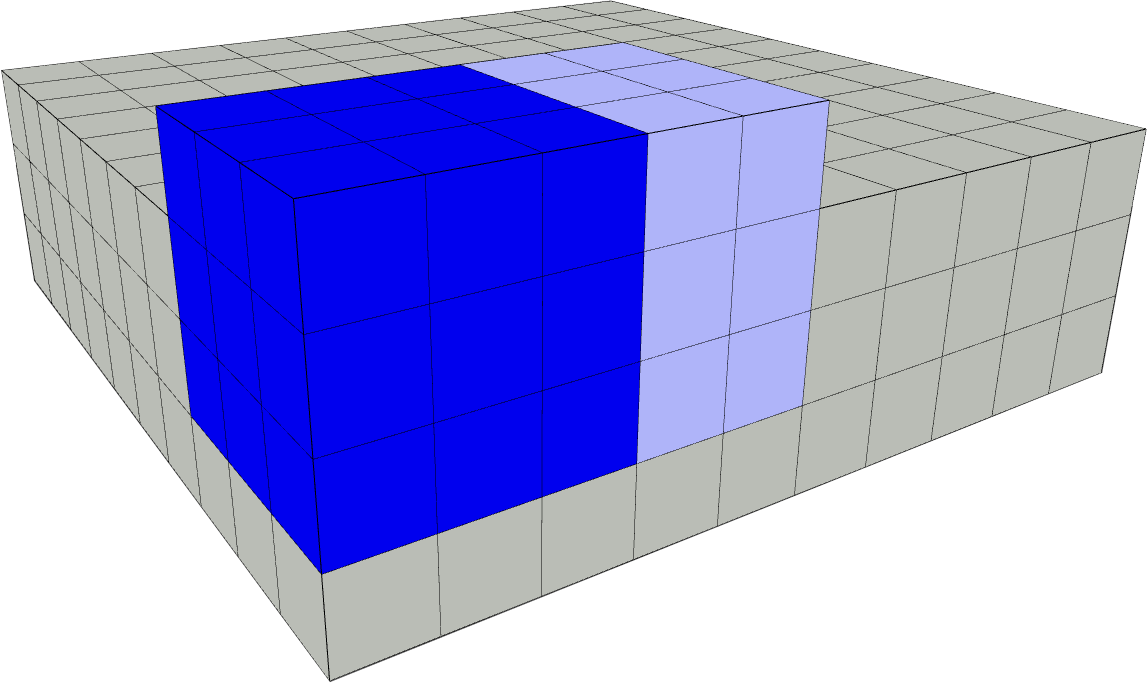}
        \caption{Look-ahead extension in the corner.}
        \label{fig:im2}
     \end{subfigure}
     \hspace{2.4cm}
     \begin{subfigure}[b]{0.3\textwidth}
        \centering
	\includegraphics[scale=0.4]
         {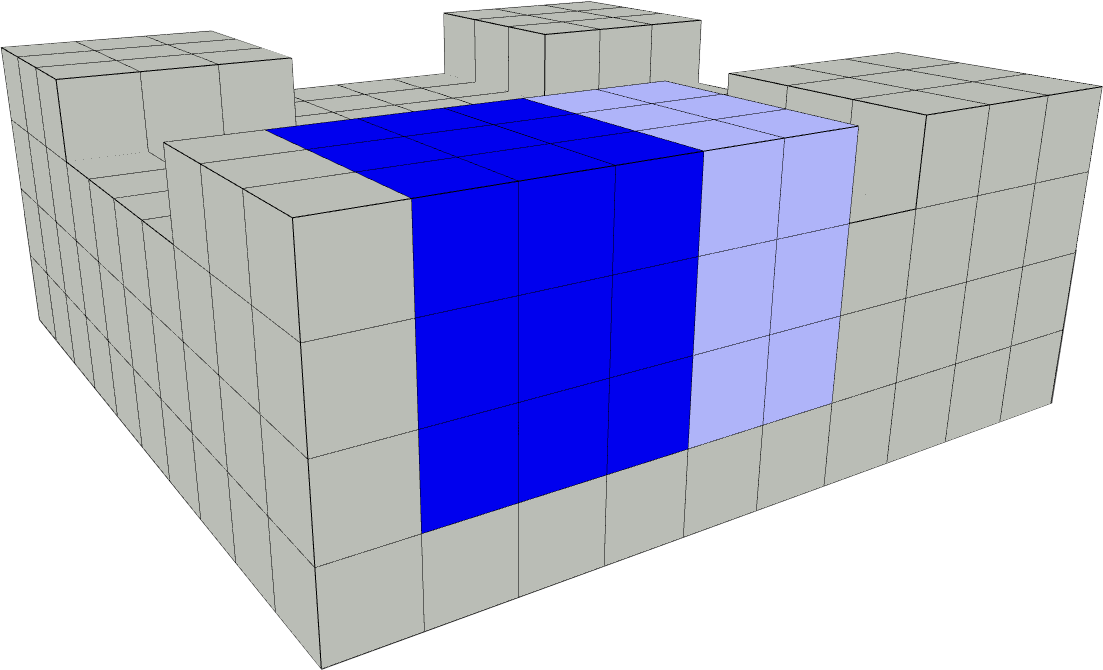}
	\caption{Internal look-ahead extension: filling the frame.}
	\label{fig:im3}
     \end{subfigure}
     
     \begin{subfigure}[b]{0.3\textwidth}
         \centering
        \includegraphics[scale=0.5]{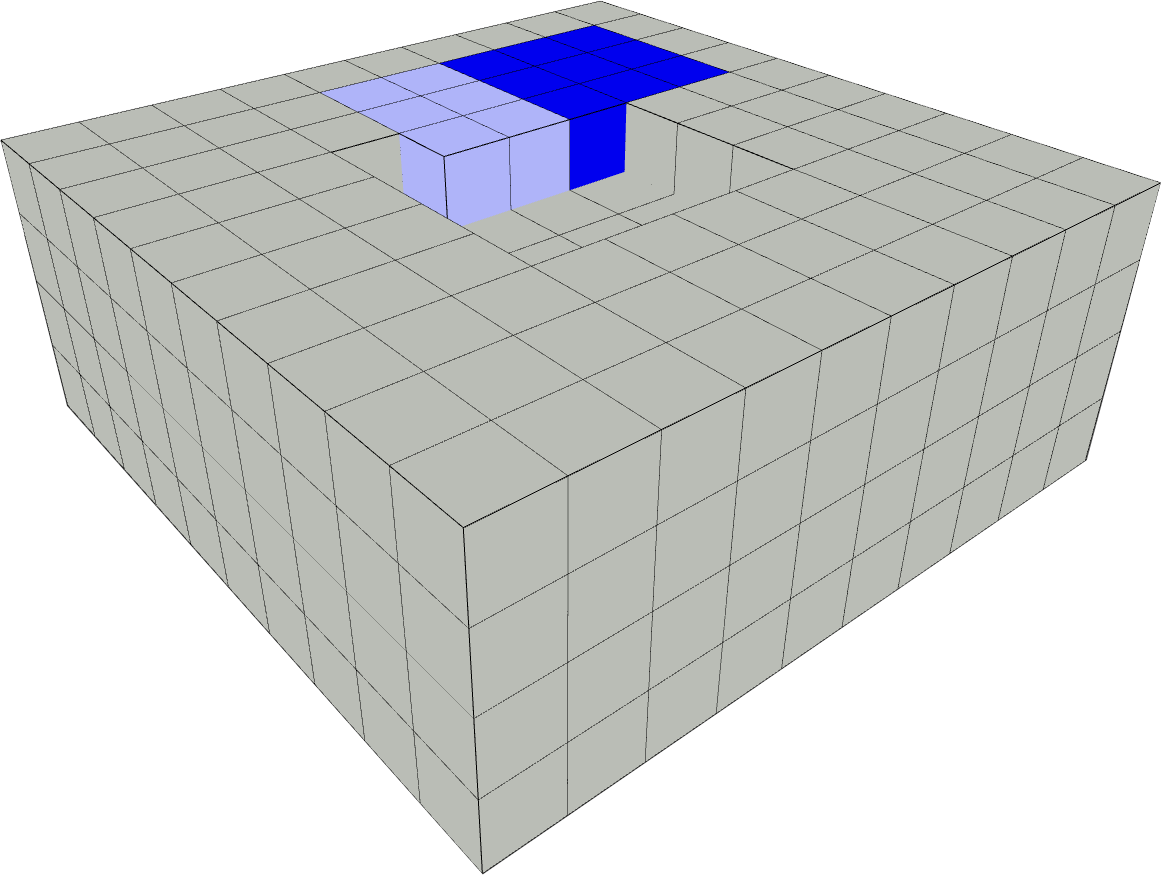}
        \caption{Internal look-ahead extension within the frame.}
        \label{fig:im4}
     \end{subfigure}
        \caption{Look-ahead extensions for $d=3,$ $k=3$; an extension subgrid is blue, subgrids that we place `ahead' are light-blue}
        \label{fig:three graphs}
\end{figure}
Note that, for every $i\leq 2k$, we have $|x_i\cap S_{i-1}|=k^d-k^{d-1}$. Also, for every $i>2k$, the algorithm performs a look-ahead extension so that, for every $j\in\{2,\ldots,k\}$, we have that the intersection of $S_{i-1}$ extended by $H_1,\ldots, H_{j-1}$ with $y_j$ has at least $k^d-k^{d-2}$ elements.
%$|y_j\cap\left(S_{i-1}\cup H_1\cup\ldots\cup H_{j-1}\right)|\leq k^d-k^{d-2}.$
 A successful run of the algorithm performs $2k$ naive extensions and $(n-k+1)^d-2k-1$ look-ahead extensions. In the next two sections, we prove that whp the algorithm outputs the entire colouring.

\subsubsection{Naive extensions are not ambiguous}
\label{sc:naive_mistake}

In this section we prove that whp the algorithm successfully performs all $3k$ naive extensions. Fix $x\in[n-k+1]^d$. Let $\mathrm{NAI}=\mathrm{NAI}(x)$ be the event that there is $y\neq x$ in $[n-k+1]^d$ such that either $x[\mathbf{e}_1]\cong y[\mathbf{e}_1]$ or $x[-\mathbf{e}_1]\cong y[-\mathbf{e}_1]$. Clearly, for every $i\in[3k]$, the initial position $\mu=x_0$ identifies $x$ such that the `ambiguous' value returned at step $i$ implies $\mathrm{NAI}(x)$. Thus, it is sufficient to prove the following. 
%Given $\mu=j,$ the type of the extension at the step $i$ is defined in a unique way, and the number of the naive extensions performed is at most $3k.$ Therefore, it is sufficient to prove the following claim.
\begin{claim}
    $\mathbb{P}(\mathrm{NAI})=\frac{1}{k}\exp\left[-\Omega(\sqrt{\log n})\right]=o\left(\frac{1}{k}\right).$
\label{cl:naive_e}
\end{claim}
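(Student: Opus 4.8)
The plan is to bound $\mathbb{P}(\mathrm{NAI})$ by a union bound over the $(n-k+1)^d$ choices of $y\neq x$, estimating for each the probability that $x[\mathbf{e}_1]\cong y[\mathbf{e}_1]$ or $x[-\mathbf{e}_1]\cong y[-\mathbf{e}_1]$. The key observation is that $x[\mathbf{e}_1]$ and $x[-\mathbf{e}_1]$ are $(k-1)\times k\times\cdots\times k$ subgrids, each having $k^d-k^{d-1}$ vertices. So the heart of the matter is: given two (oriented) $(k-1)\times k\times\cdots\times k$ coloured subgrids of the random colouring $C$ of $G$, what is the probability that they are isomorphic (equal, in the oriented setting)? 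If the two subgrids are vertex-disjoint in $G$, this probability is exactly $r^{-(k^d-k^{d-1})}$ by independence of the colours, and there are at most $2(n-k+1)^d\le 2n^d$ ordered pairs to consider (the factor $2$ for the two orientations $\pm\mathbf{e}_1$). The annoying part is the pairs $x,y$ whose relevant slabs overlap in $G$: there only a bounded-by-$\mathrm{poly}(k)$ number of such $y$, and on the overlap the colours are shared, but one must argue that enough "free" vertices remain outside the overlap to still force a factor like $r^{-\Omega(k^{d-1})}$, or simply discard these $O(k^d)$ terms after checking they contribute $o(1/k)$.

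Concretely, I would first handle the disjoint case: summing $r^{-(k^d-k^{d-1})}$ over at most $2n^d$ pairs gives at most $2n^d r^{-(k^d-k^{d-1})}$, and this is exactly the quantity already shown to be $o(1)$ — indeed by the computation in \eqref{eq:helpful} (with the trivial bound $k\ge 1$, so $2n^d r^{-(k^d-k^{d-1})} \le 2k n^d r^{-(k^d-k^{d-1})} = o(1)$), and in fact the bound there gives $r^{-\varepsilon d k_{\mathrm{th}}^{d-1}(1+o(1))}=\exp[-\Omega(\sqrt{\log n})]$ since $k_{\mathrm{th}}^{d-1}=(d\log_r n)^{(d-1)/d}\ge \sqrt{d\log_r n}$ when $d\ge 2$. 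Dividing by $k$: the factor $n^d$ dominates so $2n^d r^{-(k^d-k^{d-1})} = \frac1k\cdot(2kn^d r^{-(k^d-k^{d-1})}) = \frac1k\exp[-\Omega(\sqrt{\log n})]$, which is the claimed form once we note $k = n^{o(1)}$ so absorbing it into the exponent is harmless. Then I would dispatch the overlapping case: the slab $y[\pm\mathbf{e}_1]$ overlaps $x[\pm\mathbf{e}_1]$ only if $y$ is within $\ell^\infty$-distance $k$ of $x$, giving at most $(2k+1)^d$ choices of $y$; for each such $y$, since $y\ne x$, the two $(k-1)\times k\times\cdots\times k$ slabs are not identical as subsets of $[n]^d$, so their symmetric difference inside $G$ contains at least $k^{d-1}$ vertices (a full $(d-1)$-dimensional face), and each such vertex appears in at most one of the two slabs; carefully, isomorphism forces a system of colour equations on these free vertices yielding probability at most $r^{-\Omega(k^{d-1})}$. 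Summing: $(2k+1)^d r^{-\Omega(k^{d-1})} = \exp[-\Omega(k^{d-1})+O(d\log k)] = \exp[-\Omega(\sqrt{\log n})]$, and this is already $o(1/k)$.

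The main obstacle I anticipate is the overlapping case — specifically, making precise that whenever $y\ne x$ the isomorphism $x[\mathbf{e}_1]\cong y[\mathbf{e}_1]$ (in the oriented setting this is literal equality of colour patterns) imposes enough independent colour constraints. The cleanest way is: in the oriented model an "isomorphism" of the $(k-1)\times k\times\cdots\times k$ slabs is just the translation matching their shapes, so $C$ restricted to the two slabs must agree after this translation; if the translation vector is nonzero, iterate it — the set of vertices in the slab whose translate also lies in the slab strictly shrinks, and one extracts a chain of equalities $C(v)=C(v+\mathbf{t})=C(v+2\mathbf{t})=\cdots$ partitioning a positive fraction of the $\Omega(k^{d-1})$-sized free region into nontrivial orbits, each orbit contributing a factor $\le r^{-1}$ per "new" vertex. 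This is routine but needs to be stated carefully; alternatively one avoids it entirely by noting these $O(k^d)$ overlapping terms are dwarfed by the disjoint contribution and can be bounded trivially by $(2k+1)^d r^{-1}$ if one only needs $o(1/k)$ — but that crude bound is not $o(1/k)$, so some nontrivial argument on the overlap genuinely seems required. I would therefore invest the detail there, and in the unoriented case additionally union over the $O(1)$ lattice symmetries, which changes nothing asymptotically.
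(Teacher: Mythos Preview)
Your proposal is correct and follows the same union-bound approach as the paper, but you are working harder than necessary on the overlapping case. In fact, for \emph{every} $y\neq x$ (overlapping or not) one has exactly
\[
\mathbb{P}\bigl(x[\mathbf{e}_1]\cong y[\mathbf{e}_1]\bigr)=r^{-(k^d-k^{d-1})}.
\]
The reason is that in the oriented model this event is the conjunction of the $m:=k^d-k^{d-1}$ equations $C(v)=C(v+\mathbf{t})$ for $v\in x[\mathbf{e}_1]$, where $\mathbf{t}=y-x\neq 0$. Since translation by a nonzero vector in $\mathbb{Z}^d$ has no finite orbits, the graph on $x[\mathbf{e}_1]\cup y[\mathbf{e}_1]$ with these $m$ edges is a forest, so the $m$ constraints are linearly independent and the probability is exactly $r^{-m}$. (Your chain argument, carried to its conclusion, would yield this exact value rather than merely $r^{-\Omega(k^{d-1})}$.) The paper's proof is therefore the single line $\mathbb{P}(\mathrm{NAI})\le 2n^d r^{-(k^d-k^{d-1})}$ together with~\eqref{eq:helpful}; your disjoint/overlapping split is unnecessary.
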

\begin{proof}
For a fixed $y\neq x$, we have that $\mathbb{P}(x[\mathbf{e}_1]\cong y[\mathbf{e}_1])=r^{-(k^d-k^{d-1})}$. By the union bound,
$$
 \mathbb{P}(\mathrm{NAI})\leq 2n^d  r^{-(k^d-k^{d-1})}=\frac{1}{k}\exp\left[-\Omega\left(\sqrt{\log n}\right)\right]=o\left(\frac{1}{k}\right)
$$
due to \eqref{eq:helpful}.

%    Note that, given $\mu=j,$ the position of $S_{i-1},$ direction $\textbf{e}$ along which we perform the extension at the step $i,$ the vertex $v$ at which we perform it are defined in a unique way. Hence, we know the position of the `true' $x_i$ in $G.$ It means that the probability of a mistake is at most the probability of existence of a subgrid that has $k^d-k^{d-1}$ vertices of the same as the respective vertices of $x_i$ (the ones that are in the intersection $x_i\cap S_{i-1}$). The probability of this event is at most
%    \begin{align*}
%    \frac{n^d}{r^{k^d-k^{d-1}}}
%    =\frac{n^d}{kr^{k^d-k^{d-1}-\log_r k}}
%    & \leq\frac{n^d}{kr^{(k_{\mathrm{th}}+3/4)^d(1-(k_{\mathrm{th}})^{-1}(1+o(1)))}}\\
%    & \leq\frac{n^d}{kr^{d\log_r n\left(1+\Theta\left(k_{\mathrm{th}}^{-1}\right)\right)}}
%    =o\left(\frac{1}{k}\right),
%    \end{align*}
%    since $k^d-k^{d-1}-\log_r k$ is growing with respect to $k.$
\end{proof}

\subsubsection{Look-ahead extensions are not abmiguous}
\label{sc:look-ahead_mistake}

In this section we prove that whp the algorithm successfully performs all $(n-k+1)^d-2k-1$ look-ahead extensions. Fix $x\in[n-k+1]^d$. Let $\mathrm{LC}=\mathrm{LC}(x)$ be the event that % some look-ahead extension applied to a subgr $2k\times k\times\ldots\times k$ grid containing $x$ is not unique, i.e.
there are $\ell,\ell'\in[d]$, $\delta,\delta'\in\{0,1\}$ and a path $x_1\ldots x_k$ in $\mathcal{G}$ such that 
\begin{itemize}
\item[C1] $x_1\neq x+(-1)^{\delta'}\mathbf{e}_{\ell'}$ (i.e., the extension is not unique),
\item[C2] for every $i\in[k]$, $\,(x+(i-1)(-1)^{\delta}\mathbf{e}_{\ell})[(-1)^{\delta'}\mathbf{e}_{\ell'}]\cong x_i[-(-1)^{\delta'}\mathbf{e}_{\ell'}]$ (i.e. the wrong extension fits).
\end{itemize}
Clearly, for every $i>3k$ such that a look-ahead extension is applied in a corner, the initial position $\mu=x_0$ identifies $x$ such that the `ambiguous' value returned at step $i$ implies $\mathrm{LC}(x)$. 

In the same way we define the event $\mathrm{LI}=\mathrm{LI}(x)$ describing a failure of an internal look-ahead extension: there are $\ell,\ell'\in[d]$, $\delta,\delta'\in\{0,1\}$ and a path $x_1\ldots x_k$ in $\mathcal{G}$ such that 
\begin{itemize}
\item $x_1\neq x+(-1)^{\delta}\mathbf{e}_{\ell}\,\,$ while $\,\,x[(-1)^{\delta}\mathbf{e}_{\ell}]\cong x_1[-(-1)^{\delta}\mathbf{e}_{\ell}]$,
\item for every $i\in[k]$, $\,y_i[(-1)^{\delta'}\mathbf{e}_{\ell'}]\cong x_i[-(-1)^{\delta'}\mathbf{e}_{\ell'}]$
\end{itemize}
for the blue path $y_1=x-(-1)^{\delta'}\mathbf{e}_{\ell'}+(-1)^{\delta}\mathbf{e}_{\ell}\ldots y_k=x-(-1)^{\delta'}\mathbf{e}_{\ell'}+(k-1)(-1)^{\delta}\mathbf{e}_{\ell}$ in $\mathcal{G}$. % defined uniquely by $x,\ell,\ell',\delta,\delta'$.

Finally, let $\mathcal{R}$ be the event that there are no rainbow paths $x_1\ldots x_{\ell}$ in $\mathcal{G}$ such that $\ell\leq 3k$ and $x_{\ell}\in\mathcal{N}(x_1)$. Let us recall that in the definition of $\mathcal{G}$ we fix the ``direction'' of edges (i.e., the vector $\mathbf{e}$ is defined as $\mathbf{e}=(1,0,\ldots,0)$), though clearly Lemma~\ref{lm:lattices_main} holds true for any choice of the direction, and there are constantly many choices. So we treat $\mathcal{R}$ as a stronger event that claims the absence of rainbow paths for any choice of the direction. Let us recall that a successful run of the algorithm performs $O(n^d)$ look-ahead extensions, $O(n)$ of which are applied in corners. Due to Claim~\ref{lm:lattices_main}, it is sufficient to prove

\begin{claim}
    $\mathbb{P}(\mathrm{LC}\wedge\mathcal{R})=o\left(n^{-d+\varepsilon}\right)=o\left(n^{-1}\right)\,$ and
    $\,\mathbb{P}(\mathrm{LI}\wedge\mathcal{R})=o\left(n^{-d}\right).$
    %Suppose we perform a look-ahead extension at the step $i.$ If it is applied in the corner, then the probability of a mistake is equal to $o\left(\frac{1}{n}\right).$ Otherwise it is equal to $o\left(\frac{1}{n^d}\right).$
\label{cl:look_e}
\end{claim}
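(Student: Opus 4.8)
}
The plan is to unfold each failure event into a configuration of subgrids in $\mathcal{G}$ and then split into two cases. Configurations that carry a short rainbow path closing up are impossible on the event $\mathcal{R}$ by Lemma~\ref{lm:lattices_main}; the remaining ``all-blue'' configurations force the random colouring to agree with a nontrivial translate of itself on a large box, and are killed by a union bound over positions.

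First I would unpack $\mathrm{LC}(x)$. After fixing the constantly many witnesses $\ell,\ell',\delta,\delta'$, a failed corner look-ahead yields a directed path $x_1\to\cdots\to x_k$ in $\mathcal{G}$ in the look-ahead direction $(-1)^\delta\mathbf{e}_\ell$; along it sits the reconstructed rod $v_i:=x+(i-1)(-1)^\delta\mathbf{e}_\ell$, which is a \emph{blue} path in the \emph{same} $\mathcal{G}$; and for each $i$, C2 gives a $\mathcal{G}$-edge $v_i\to x_i$ in the transversal direction $(-1)^{\delta'}\mathbf{e}_{\ell'}$, the edge $v_1\to x_1$ being red by C1. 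If the path $x_1\to\cdots\to x_k$ uses a red edge, I claim that by combining its blue segments, the transversal red edges and the blue rod $\{v_i\}$ one extracts a rainbow path $z_1\to\cdots\to z_m$ with $m\leq 3k$ and $z_m\in\mathcal{N}(z_1)$ --- equivalently, a short cyclic chain of rectangles in which the ``right end'' of each is isomorphic to the ``left end'' of the next. On $\mathcal{R}$ such a path does not exist, so on $\mathrm{LC}(x)\wedge\mathcal{R}$ the path $x_1\to\cdots\to x_k$ must be entirely blue; then $x_i=x_1+(i-1)(-1)^\delta\mathbf{e}_\ell$ is a parallel translate of the rod, C1 forces $x_1-x\neq(-1)^{\delta'}\mathbf{e}_{\ell'}$, and C2 forces the colouring of the $(-1)^{\delta'}\mathbf{e}_{\ell'}$-slab $R$ of $\{v_i\}$ --- a box with exactly $(2k-1)(k-1)k^{d-2}$ vertices --- to agree with its own translate by a nonzero vector (an explicit function of $x_1-x$).

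Because $R$ is a fixed box, this last coincidence has probability exactly $r^{-(2k-1)(k-1)k^{d-2}}$ for every valid $x_1$: pairing up cells along orbits of the translation shows that the number of degrees of freedom of the colouring of $R$ together with its translate drops by exactly $|R|$, whether or not the two overlap. Summing over the $\leq n^d$ positions of $x_1$ and the $O(d^2)$ witnesses, and using $k\geq(d\log_r n)^{1/d}+\frac{1}{d}+\varepsilon$ --- which gives $(2k-1)(k-1)k^{d-2}\geq 2k_{\mathrm{th}}^d-(1-2d\varepsilon+o(1))k_{\mathrm{th}}^{d-1}$ --- we obtain $\mathbb{P}(\mathrm{LC}\wedge\mathcal{R})\leq O(d^2)\,n^d\,r^{-(2k-1)(k-1)k^{d-2}}=n^{-d}\exp\bigl(O((\log n)^{1-1/d})\bigr)=o(n^{-d+\varepsilon})=o(n^{-1})$, the last step using $d\geq 2$. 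The event $\mathrm{LI}(x)$ is handled the same way, with two differences. The C1-type red edge $x\to x_1$ now lies in the look-ahead direction, so in the non-all-blue case it prepends directly to the path and produces a rainbow path on $\leq k+1\leq 3k$ vertices that closes up, again excluded by $\mathcal{R}$. And an internal look-ahead requires each $H_j$, $j\geq 2$, to overlap the already reconstructed part (extended by $H_1,\ldots,H_{j-1}$) in at least $k^d-k^{d-2}$ vertices rather than $k^d-k^{d-1}$ as at a corner, so that the colouring of all but $O(k^{d-1})$ vertices of the wrong rod is forced, yielding $\mathbb{P}(\mathrm{LI}\wedge\mathcal{R})\leq O(d^2)\,n^d\,r^{-(2k^d-O(k^{d-1}))}=n^{-d}\exp\bigl(-\Theta((\log n)^{1-1/d})\bigr)=o(n^{-d})$.

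I expect the main obstacle to be the non-all-blue (``rainbow'') case: showing that a wrong look-ahead sequence, which a priori can wander far via red edges, must nevertheless produce a rainbow path on at most $3k$ vertices with $G$-adjacent endpoints --- exactly the configuration ruled out by Lemma~\ref{lm:lattices_main}. The delicate points are identifying the right cyclic sub-chain of rectangles, choosing its two endpoints, and bounding its total length by $3k$; by contrast the self-coincidence estimate in the all-blue case and the bookkeeping over the finitely many witnesses are routine.
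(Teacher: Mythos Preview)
Your all-blue case is fine and is essentially the $\tau=1$ term of the paper's computation. The gap is the reduction to that case. You want to argue that on $\mathcal{R}$ the wrong look-ahead path $x_1\to\cdots\to x_k$ cannot contain a red edge, by stitching together blue pieces of this path, the transversal edges $v_i\to x_i$, and the rod $\{v_i\}$ into a short rainbow path with $G$-adjacent endpoints. But the transversal edges $v_i\to x_i$ live in the $\ell'$-copy of $\mathcal{G}$, while the rod and the wrong path are in the $\ell$-copy; the event $\mathcal{R}$ only excludes short closing rainbow paths in each \emph{single} direction, not walks that mix directions. And in a single direction there is nothing to close: take $d=2$, $x_1$ and $x_2$ far from each other and from the rod, $x_1\to x_2$ red, $x_2\to\cdots\to x_k$ blue. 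In direction $\ell$ you have the blue rod and the rainbow $x$-path, with no $\ell$-edge linking them; in direction $\ell'$ you have the red transversal edges, with no $\ell'$-edges among the $v$'s or among the $x$'s. No short single-direction rainbow path with $G$-adjacent endpoints arises, so $\mathcal{R}$ does not exclude this configuration. Your $\mathrm{LI}$ sketch has the same problem: prepending $x\to x_1$ gives a rainbow $(k{+}1)$-path in direction $\ell$, but there is no reason for $x_k$ to lie in $\mathcal{N}(x)$.

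What the paper actually does is weaker and direct: it uses $\mathcal{R}$ only to ensure that the blue segments of the wrong path $x_1\ldots x_k$ are pairwise non-overlapping in $G$ (if two overlapped, the sub-path between them would be a short single-direction rainbow path with $G$-adjacent endpoints). With non-overlap in hand the colouring constraints along the wrong path become independent, and one does a union bound over the number $\tau$ of blue segments and their starting positions. Each fresh segment costs a factor $r^{-(k^d-k^{d-2})}$ (two slabs fixed) and each blue step within a segment costs $r^{-(k^{d-1}-k^{d-2})}$; summing the resulting geometric series over $\tau$ gives the stated bounds. Your $\tau=1$ computation is exactly the dominant term of that sum, but you still need the $\tau\geq 2$ terms, and $\mathcal{R}$ alone does not kill them.
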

\begin{proof}
    Either of $\mathrm{LC}\wedge\mathcal{R}$, $\mathrm{LI}\wedge\mathcal{R}$ implies the existence of a ``wrong'' path $x_1\ldots x_k$ which is a union of non-overlapping in $G$ rectangular subgrids with biggest sides of lengths $s_1,\ldots,s_{\tau}$. Note that $x_1\ldots x_k$ can be treated as a rainbow path in $\mathcal{G}$ with the above rectangular subgrids represented by $\tau$ blue subpaths (again, we might note that the actual `direction' of these paths may not coincide with $\mathbf{e}$ in the definition of $\mathcal{G}$, though the definition remains the same for any choice of $\mathbf{e}$). We bound from above probabilities of both events $\mathrm{LC}\wedge\mathcal{R}$, $\mathrm{LI}\wedge\mathcal{R}$ using the union bound over the choice of the colouring of a rainbow path. Let us fix such a colouring (in other words, the numbers $s_1,\ldots,s_{\tau}$ are fixed). Clearly, $s_1+\ldots+s_{\tau}-(k-1)\tau=k$. Fix any vertices $x_1,\ldots,x_k$ in $\mathcal{G}$ so that the sequential non-overlapping subpaths of $x_1\ldots x_k$ of lengths $s_1-k,\ldots,s_{\tau}-k$ are blue paths in $\mathcal{G}$. Colour edges of $x_1\ldots x_k$ that belong to these subpaths in blue, and all the other edges --- in red. In the case of an extension in a corner, the probability that $x_1$ fits is exactly $r^{-(k^d-k^{d-1})},$ while, for an internal extension, it equals $r^{-(k^d-k^{d-2})}$. For all the other $i\leq k$, the probability that $x_i$ fits depends on the colour of the edge $\{x_{i-1},x_i\}$ in the path: if the edge is red, then this probability is  $r^{-(k^d-k^{d-2})}$; if $\{x_{i-1},x_i\}$ is blue, then the probability equals $r^{-(k^{d-1}-k^{d-2})}$. We then get

    \begin{align*}
    \mathbb{P}(\mathrm{LI}\wedge\mathcal{R}) &\leq
        \sum_{\tau=1}^k 4d^2 k^{\tau} n^{\tau d} r^{-\tau(k^d-k^{d-2})}r^{-(s_1+\ldots+s_{\tau}-k\tau)(k^{d-1}-k^{d-2})} \\
        &=4d^2r^{-k(k^{d-1}-k^{d-2})} \sum_{\tau=1}^k \left(kn^{ d}\right)^{\tau} r^{-\tau(k^d-k^{d-1})}\\
        &= (4d^2+o(1))k n^{d}r^{-2(k^d-k^{d-1})}=o(n^{-d})
    \end{align*}
    due to \eqref{eq:helpful}. Finally,
        \begin{align*}
        \mathbb{P}(\mathrm{LC}\wedge\mathcal{R}) &\leq
        \sum_{\tau=1}^k 4d^2 k^{\tau} n^{\tau d} r^{-\tau(k^d-k^{d-2})}r^{-(s_1+\ldots+s_{\tau}-k\tau)(k^{d-1}-k^{d-2})} r^{k^{d-1}-k^{d-2}} \\
        &= (4d^2+o(1))k n^{d}r^{-2k^d+3k^{d-1}-k^{d-2}}\\
        &=o\left(r^{-k^d+2k^{d-1}-k^{d-2}}\right)=o\left(r^{-k_{\mathrm{th}}^d+2k_{\mathrm{th}}^{d-1}}\right)=o\left(n^{-d+\varepsilon}\right).
    \end{align*}
%    Otherwise, the probability of a mistake does not exceed the previously bounded one by more than $r^{k^{d-1}-k^{d-2}}$ times, and, therefore, it is at most
%    \begin{multline*}
%    \sum_{\ell=1}^k k^{\ell} d^{\ell} n^{\ell d} r^{-\ell(k^d-k^{d-1})}r^{-k^d+2k^{d-1}-k^{d-2}}\\ \leq dk^2 n^{d}r^{-2k^d+3k^{d-1}-k^{d-2}}\\   
%    \leq d(k_{\mathrm{th}}+3/4)^2 n^{-d} r^{-2(3d/4-3/2)k_{\mathrm{th}}^{d-1}(1+o(1))}=o(n^{-d+1}).
%    \end{multline*}
\end{proof}

\subsubsection{Verification}
\label{sc:verification}

Let us first explain the reason, why we need the verification runs of the algorithm. Let us assume that, after the first run, the algorithm outputs a colouring $C'$ of $G$, and that there is another colouring $C''$ with the same deck. Since the algorithm is designed in such a way that there is no step when at least two subgrids (with their neighbourhoods) fit the current position, it means that there is a step $i$ such that $S_{i-1}$ is a subgraph of both $C'$ and $C''$, while the border is reached in exactly one of the two colourings. In other words, in order to place $x_i$, we have to change the direction $\delta(i)$ for exactly one of the two colourings. Clearly, the algorithm is designed in such a way that, at this step, the direction remains the same, and so the colouring where the border is reached earlier is $C''$. 

We then run the same algorithm with different tuple of directions initiated in G1, G2, G3, and each of the $d-2$ runs of G4, i.e. we change the value of vector $\overrightarrow{\delta}=(\delta(1),\delta(2k+1),\delta(2k+(k+2)(n-k)+1),\ldots)$ and run the algorithm for all possible values of this vector. Due to Claim~\ref{cl:naive_e} and Claim~\ref{cl:look_e}, whp (in product measure) all the verification runs are successful as well. Note that the position of $x_0$ is different in $C'$ and $C''$. Thus, there is a border that closer to $x_0$ in $C'$, then in $C''$. It immediately implies that for any colouring $\tilde C$ other than the inital random colouring $C$ with the same deck, there exists a choice of the value of $\overrightarrow{\delta}$ such that the algorithm reaches the border of the initial random colouring $C$ faster than the border of $\tilde C$. But the event that this run of the algorithm is successfull contradicts $\mathrm{LC}\wedge\mathcal{R}$. Thus, due to Claim~\ref{cl:look_e}, whp all the verification runs output $C$ and there is no other colouring of the grid with the same deck.

\subsubsection{Running time and efficient derandomisation}
\label{sc:grids_time}

Let us first prove that the randomised algorithm can be run in time $O((k(n-k+1))^d\log n)$  for asymptotically almost all colourings, which is linearithmic in the input size. After that we will show that its derandomisation does not influence significantly the running time.\\

We shall observe that, if $k\geq (3d\log_r n)^{1/d}$, then whp there are no red edges in $\mathcal{G}$ (for any choice of $\mathbf{e}$). Indeed, the opposite event has probability at most
$$
n^{2d} r^{-(k^d-k^{d-1})}  \leq
 \exp[2d\ln n-(1-o(1))k^d\ln r]
 \leq
 \exp[-(1-o(1))d\ln n]=o(1).
$$
So, whp no look-ahead extensions are actually needed in this case.\\

%??? [Easy for large $k$!]

Let us {\it specify} certain subgraphs of a $k$-grid $H_k^d$. The set of specified subgraphs is closed under union, and the minimal specified subgraphs are all $2d$ parallepipeds with one side of size $k-1$ and the others of size $k$. A {\it key} is a coloured specific subgraph. Let $\mathcal{K}$ be the set of all keys. Clearly, $|\mathcal{K}|=O(r^{k^d})$.

Let us create a map $\varphi$ that assigns to every $K\in\mathcal{K}$ the multiset of grids in the deck containing $K$ (an orientation might be preserved). It is easy to see that $\varphi$ can be computed in linear time $O((k(n-k+1))^d)$ since it is sufficient to run once through the deck.

Now, we run the algorithm described in Section~\ref{sc:grid_algorithm}. It has $(n-k+1)^d-1$ steps. Let us estimate the cost of a single step $i\geq 1$. At a naive step we find the respective key $K_i$, which is a coloured minimal specified subgraph isomorphic to $S_{i-1}\cap x_i$. Observe that whp $|\varphi(K_i)|=1$ implying $O(k^d)$ time bound. Assume now that the step $i$ is look-ahead. Without loss of generality, we may assume that $k<(3d\log_r n)^{1/d}$.
At a look-ahead step we perform $k$ substeps, each time extending suitable $\mathcal{G}$-paths. At every such substep we go over these paths, find for each one the respecite key $K_{i,j}$, $j\in[k]$, and then replace each path $P$ by a multiset of all paths $Px$, $x\in\varphi(K_{i,j})$. In order to prove that this look-ahead step can be computed in $O(k^{d}\log n)$ time, it is sufficient to show that, for every $j$, the number of suitable $\mathcal{G}$-paths is $O(\log n/k)$. It follows immediately from the following claim, completing the proof of the linearithmic time bound for the randomised algorithm.

Fix $x\in[n-k+1]^d$, $j\in[k]$, $\ell,\ell'\in[d]$,  and $\delta,\delta'\in\{0,1\}$. Let $\Psi_j(x)$ be the number of paths $x_1\ldots x_j$ in $\mathcal{G}$ such that properties C1, C2 from the definition of the event $\mathrm{LC}(x)$ from Section~\ref{sc:look-ahead_mistake} hold.

\begin{claim}
Let $c>0$ be large enough. Whp for all $x,j$, $|\Psi_j(x)|\leq c\sqrt{\log n}$.
\label{cl:few_wrong_paths}
\end{claim}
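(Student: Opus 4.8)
The plan is to bound $|\Psi_j(x)|$ by essentially the same first-moment computation that underlies Lemma~\ref{lm:lattices_main} and Claim~\ref{cl:look_e}, but now keeping track of the \emph{number} of fitting paths rather than merely the probability that one exists. First I would note that, as in the proof of Claim~\ref{cl:look_e}, any path $x_1\ldots x_j$ counted by $\Psi_j(x)$ decomposes into at most $j$ blue subpaths corresponding to maximal rectangular pieces, and conditioning on the event $\mathcal R$ (no short rainbow paths returning to a neighbourhood) forces all these pieces to be pairwise non-overlapping in $G$, so that the colour-matching events along the path are mutually independent. Thus $\E[\Psi_j(x)\cdot\mathbf 1_{\mathcal R}]$ is bounded by a sum over $\tau\le j$ of (number of ways to place $\tau$ non-overlapping blue blocks) times (product of independent matching probabilities), which is exactly the sum already estimated in Claim~\ref{cl:look_e}; it evaluates to $o(n^{-d+\varepsilon})$ for a \emph{single} $x$.

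The point is that this bound is polynomially small, so a union bound over all $x\in[n-k+1]^d$ and all $j\in[k]$ (and the constantly many choices of $\ell,\ell',\delta,\delta'$) still leaves the expected number of pairs $(x,j)$ with $|\Psi_j(x)|$ large extremely small. More precisely, I would apply Markov's inequality to $\Psi_j(x)\mathbf 1_{\mathcal R}$ at threshold $c\sqrt{\log n}$:
$$
\mathbb P\bigl(|\Psi_j(x)|\ge c\sqrt{\log n},\ \mathcal R\bigr)\le \frac{\E[\Psi_j(x)\mathbf 1_{\mathcal R}]}{c\sqrt{\log n}}=o\!\left(\frac{n^{-d+\varepsilon}}{c\sqrt{\log n}}\right),
$$
and then union-bound over the $O(k\,n^d)=n^{d+o(1)}$ choices of $(x,j,\ell,\ell',\delta,\delta')$. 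Choosing $\varepsilon$ small relative to the gap (recall $k\ge k_{\mathrm{th}}+1/d+\varepsilon$, which is what made the estimate in Claim~\ref{cl:look_e} beat $n^{-d}$ with room to spare) makes this sum $o(1)$. Since Lemma~\ref{lm:lattices_main} gives $\mathbb P(\mathcal R)\to 1$, we conclude that whp $\mathcal R$ holds and simultaneously $|\Psi_j(x)|\le c\sqrt{\log n}$ for every $x$ and $j$, which is the claim.

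The main obstacle I anticipate is not the counting itself but making the independence argument fully rigorous: the decomposition into non-overlapping blue blocks used in Claim~\ref{cl:look_e} relied on restricting to inclusion-minimal rainbow paths, whereas here we want to count \emph{all} paths $x_1\ldots x_j$ satisfying C1 and C2, which may a priori self-intersect or revisit neighbourhoods. The resolution is that on the event $\mathcal R$ such pathological paths simply do not exist (a self-returning rainbow segment of length $\le 3k$ is exactly what $\mathcal R$ forbids), so conditioning on $\mathcal R$ we may assume every path $\Psi_j(x)$ counts is genuinely ``spread out'' and the block structure with independent matching probabilities applies verbatim. One should also double-check that the constant $c$ can be taken uniform over $j\le k$ — this is fine since the geometric-type sum over $\tau$ in Claim~\ref{cl:look_e} converges and the per-$x$ bound $o(n^{-d+\varepsilon})$ is already uniform in $j$. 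With these points addressed the proof is a short Markov-plus-union-bound argument.
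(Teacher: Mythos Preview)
Your proposal has a genuine gap: the claim that the per-$x$ expectation $\E[\Psi_j(x)\mathbf 1_{\mathcal R}]$ is $o(n^{-d+\varepsilon})$ \emph{uniformly in $j$} is false. The bound $o(n^{-d+\varepsilon})$ in Claim~\ref{cl:look_e} is for the full look-ahead of length $j=k$, and it relies crucially on the extra factor $r^{-(k-1)(k^{d-1}-k^{d-2})}=r^{-k^d(1-o(1))}$ coming from the $k-1$ subsequent fits. For small $j$ (in particular $j=1$) this factor disappears, and the expectation is only
\[
\E[\Psi_1(x)\mathbf 1_{\mathcal R}]\;\le\;(1+o(1))\,k\,n^d\,r^{-(k^d-k^{d-1})}\;=\;\exp\!\bigl(-\Theta((\log n)^{1-1/d})\bigr)
\]
by~\eqref{eq:helpful}. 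This is subpolynomial, not polynomial, in $n$ (for $d=2$ it is merely $e^{-\Theta(\sqrt{\log n})}$). Your Markov step then yields
\[
\mathbb P\bigl(|\Psi_j(x)|\ge c\sqrt{\log n},\ \mathcal R\bigr)\;\le\;\frac{e^{-\Theta(\sqrt{\log n})}}{c\sqrt{\log n}},
\]
and the union bound over the $n^{d+o(1)}$ choices of $(x,j,\ell,\ell',\delta,\delta')$ blows up rather than tending to zero.

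The paper circumvents this by a different mechanism: instead of dividing the single-path expectation by $\psi=\lceil c\sqrt{\log n}\rceil$, it \emph{raises it to the power} $\psi$. On the event $\mathcal R$ one may restrict to self-non-overlapping and mutually disjoint paths, and the probability that $\psi$ such paths simultaneously fit is, by independence, at most $\bigl((1+o(1))kn^d r^{-(k^d-k^{d-1})}\bigr)^{\psi}$. Since the base is $e^{-\Theta(\sqrt{\log n})}$ and the exponent is $c\sqrt{\log n}$, this gives $e^{-\Theta(c\log n)}=n^{-\Theta(c)}$, which \emph{is} polynomially small and survives the union bound for $c$ large enough. This exponentiation step is the missing idea in your sketch; a first-moment/Markov argument alone is not strong enough here.
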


Note that a similar claim holds true for internal extensions as well (though the probability bounds for internal extensions are even better), thus we omit its proof for the sake  of brevity and avoiding repetitions.\\

%We will use the following variant of the Chernoof bound (see, e.g.,~\cite[Appendix A]{AlonSpencer}).

%\begin{lemma}[Chernoff bound]
%Let $N\in\mathbb{N}$, $p\in[0,1]$.
%\end{lemma}

{\it Proof of Claim~\ref{cl:few_wrong_paths}}. Due to Lemma~\ref{lm:lattices_main}, it is sufficient to prove that the maximum number of self-non-overlapping and disjoint (in $G$) paths $x_1\ldots x_j$ in $\mathcal{G}$ satisfying C1 and C2 is at most $c\sqrt{\log n}$. Let $\psi=\lceil c\sqrt{\log n}\rceil$. Using the bound on $\mathbb{P}(\mathrm{LC}\wedge\mathcal{R})$ from the proof of Claim~\ref{cl:look_e}, we get that the probability that there are $\psi$ such paths $x_1\ldots x_j$ is at most
\begin{align*}
 \biggl(\sum_{\tau=1}^j k^{\tau} n^{\tau d} & r^{-\tau(k^d-k^{d-2})}
 r^{-(j-\tau)(k^{d-1}-k^{d-2})}r^{k^{d-1}-k^{d-2}}\biggr)^{\psi}=\\
 &=\left(r^{-j(k^{d-1}-k^{d-2})}r^{k^{d-1}-k^{d-2}}\sum_{\tau=1}^j \left(k n^{d} r^{-(k^d-k^{d-1})}\right)^{\tau}\right)^{\psi}\\
 &\leq
 \left((1+o(1))k n^{d} r^{-(k^d-k^{d-1})}\right)^{\psi}\leq r^{-(\varepsilon d^{2-1/d}-o(1))\psi\sqrt{\log n}}=o((kn^d)^{-1})
\end{align*}
due to~\eqref{eq:helpful}. The union bound over $x,j$ finishes the proof.  $\quad\quad\quad\quad\quad\quad\quad\quad\quad\quad\quad\quad\quad\Box$\\

Though a direct derandomisation increases the running time by a $(n-k+1)^d$-factor, it can be reduced to a factor of 2. We may clearly assume that $k<(3d\log_r n)^{1/d}$ (otherwise we can just start from an arbitrary $x_0$). Let us fix an integer $\chi$ such that $r^{\chi}\in[(n/k)^d/k,r(n/k)^d/k]$. Obviously, $\chi\in[k^d]$. % Let us  fix a coloring $C_0$ of $H^d_k$ such that any two hyperplanes $\{i\}\times[k]^{d-1}$ and $\{j\}\times[k]^{d-1}$ are coloured differently.
Instead of choosing a uniformly random $\mu$, we start from an arbitrary $\mu$ such that its first $\chi$ vertices are coloured in the first colour. % coincides with $C_0$.
Note that, if such a card exists in the deck (we call it {\it almost white}), it can be found in linear time. Also, for a fixed $x_0\in V(\mathcal{G})$, % the probability that a fixed $x\in V(\mathcal{G})$ is almost white is 
\begin{equation}
\mathbb{P}(x_0\text{ is almost white})=r^{-\chi}\in\left[\frac{k^{d+1}}{rn^d},\frac{k^{d+1}}{n^d}\right].
\label{eq:prob_white}
\end{equation}

Let us prove that whp (in the measure induced by $C$) an almost white card exists. Note that it will immediately imply the desired assertion due to~\eqref{eq:prob_white}, the union bound over all cards, Claims~\ref{cl:naive_e},~\ref{cl:look_e}, and the fact that whp there are no overlapping red-adjacent cards (the latter follows from Lemma~\ref{lm:lattices_main}). In particular, whp for {\it every} (not necessarily almost white) first subgrid $\mu=x_0$ the algorithm can not fail when performing an internal extension step due to the $o(n^{-d})$ bound in Claim~\ref{cl:look_e} (the total number of internal extensions over all $x_0$ is $O(n^d)$).

Let $\Phi$ be the number of almost white cards in the deck. Due to~\eqref{eq:prob_white}, $\mathbb{E}\Phi=\Theta(k^{d+1})$. It remains to prove that $\mathrm{Var}\Phi=o((\mathbb{E}\Phi)^2)$ and apply Chebyshev's inequality. For non-overapping cards, the events that cards are almost white are independent. The number of ovelapping cards is $O(k^d n^d)$. Thus,
$$
 \mathrm{Var}\Phi\leq\mathbb{P}(x_0\text{ is almost white})\times O(k^d n^d)=O(k^{2d+1})=o\left((\mathbb{E}\Phi)^2\right)
$$
completing the proof.\\

%Note that, whp (in the measure induced by $C$) for every first subgrid $\mu=x_0$ the algorithm can not fail when performing an internal extension step due to the $o(n^{-d})$ bound in Claim~\ref{} (the total number of internal extensions over all $x_0$ is $O(n^d)$).

%Let us prove that whp (in the measure induced by $C$), for every first subgrid $\mu=x_0$, either the algorithm described in Section~\ref{} (further, we refer to is as {\bf A1}) or its variant {\bf A2} with the opposite initial direction $\delta(0)=1$ successfully overcomes all naive steps, and can even perform one more naive step. Indeed, otherwise, there are $x,x'$ in $\mathcal{G}$ at blue distance at most $6k$ from each other such that $\mathrm{NAI}(x)\wege\mathrm{NAI}(x')$. The number of choices of such a pair $(x,x')$ is $O(kn^d)$, while
%$$
% \mathbb{P}()\leq
%$$
%implying the desired assertion due to the union bound.

%It remains to

\begin{remark}
Note that the algorithm presented in Sections~\ref{sc:step-0},~\ref{sc:grid_algorithm} can be run on the deck of subgrids of a randomly coloured torus $\mathbb{Z}_n^d$, and it successfully outputs the colouring whp as well due to arguments similar to Lemma~\ref{lm:lattices_main} and those from Sections~\ref{sc:naive_mistake},~\ref{sc:look-ahead_mistake}. However, in this case we even do not need verification runs to prove uniqueness since the algorithm can not reach the border, and so, if a colouring with the given deck is not unique, the algorithm initiated at any subgrid would give up, since at some step it should find at least two subgrids (with their neighbourhoods) that fit the current position. We shall note that the algorithm of Ding and Liu~\cite{DL} does not work for tori since they heavily use the presence of corners --- they actually start the exploration from a corner while our algorithm first places a random (or minimum) card from the deck.\\
\label{rmk:tori}
\end{remark}

\begin{remark}
In the proof of Theorem~\ref{th:grids} we assumed that orientations of $k$-subgrids are observed. Nevertheless, the `unoriented' case can be treated similarly. Indeed, a $d$-dimensional lattice has $2^d d!=O(1)$ automorphisms, thus, on every step of the algorithm, there at most $2^d d!$ ways to place an extension from the deck. Since constant factors do not affect significantly probability bounds in Sections~\ref{sc:step-0},~\ref{sc:naive_mistake},~\ref{sc:look-ahead_mistake}, we get that the colouring is reconstructible whp even when orientations are not observed.

\label{rmk:unoriented}
\end{remark}

\section{From colour reconstruction to graph reconstruction: proof of Lemma~\ref{lm:reconstruction_bridge}}
\label{sc:bridge_lemma_proof}

Set $k=k(2n)$. Recall that $G_n\sim G(n,1/2)$ is asymmetric whp. Fix a constant $\delta>0$ and assume that $G_n$ is asymmetric and reconstructible from its full $k$-deck with probability at least $\delta$. Let us colour the set of vertices $[2n]$ of the random graph $G_{2n}$ randomly and independently of the choice of edges of $G_{2n}$ in two colours. By the de Moivre--Laplace limit theorem, with probability $\Theta(n^{-1/2})$ (over the uniform probability measure of the colouring), the colour classes have size exactly $n$.\\

We claim that, given a balanced deterministic bipartition $[2n]=V_1\sqcup V_2$, with probability at least $\delta^2-o(1)$
\begin{itemize}
\item[(i)] each $G_{2n}[V_i]$ is reconstructible from its full $k$-deck, \item[(ii)] each $G_{2n}[V_i]$ is non-isomorphic to any other $n$-vertex induced subgraph of $G_{2n}$.
\end{itemize}
We already know that both $G^i:=G_{2n}[V_i]$, $i=\{1,2\}$, are reconstructible with probability at least $\delta^2$. Thus, it is sufficient to show that $G_{2n}$ satisfies (ii) whp. 

Fix $\varepsilon>0$ small enough. Note that whp, by the Chernoff bound, for any two vertices $u,v$ in $G^1$, the number of neighbours of $v$ that are not adjacent to $u$ is $n/4(1+o(1))$. Let us consider the following event $\mathcal{B}$: 
\begin{center}
there exist $U_1\subset V_1$ of size at most $\varepsilon n$ and $U_2\subset V_2$ of the same size such that an isomorphism $\varphi:G^1\to G_{2n}\left[\left(V_1\cup U_2\right)\setminus U_1\right]$ does not preserve vertices of $V_1\setminus U_1$.
\end{center}
Assuming that $\mathcal{B}$ holds, we immediately get using a standard argument that whp this isomorphism moves $\Theta(n^2)$ edges of $V_1\setminus U_1$ (if $v$ moves to $\varphi(v)=u$, then the set $N$ of neighbours of $v$ in $V_1\setminus U_1$ that are not adjacent to $u$ should be not fixed as well: $\varphi(N)\cap N=\varnothing$, and the number of edges induced by $N$ is $\Theta(n^2)$ with probability $1-e^{-\Theta(n^2)}$). The existence of two subgraphs of size $n$ such that an isomorphism between them moves some set that induces  $\Theta(n^2)$ edges to another disjoint set has probability at most $2^{4n}n!\exp(-\Theta(n^2))$. Thus,
$$
 \mathbb{P}(\mathcal{B})\leq
\mathbb{P}\biggl(\exists v,u\in V_1:\,\,
 |N_{G^1}(v)\setminus N_{G^1}(u)|<(1/4-\varepsilon)n\biggr)+2^{4n}n!\exp(-\Theta(n^2))=o(1).
$$
Finally, by  the union bound over all $k<n-1$ and all $n$-sets $U$ having exactly $k$ vertices in common with $V_1$, we get:
\begin{align*}
\mathbb{P}(\exists U\neq V_1\,\,G_{2n}[U]\cong G_{2n}[V_1])&\leq \mathbb{P}(\mathcal{B})+
\sum_{k=n-\varepsilon n}^{n-1} {n\choose k}\frac{n!}{k!} 2^{-k(n-k)-{n-k\choose 2}}\\
&\quad\quad\quad\,\,\,+
 \sum_{k=0}^{n-\varepsilon n} {n\choose k}{n\choose n-k} n! 2^{-{n-k\choose 2}}\\
 &=o(1)+2^{-n(1-o(1))}+2^{-\Theta(n^2)}=o(1).
\end{align*} 

We then make the same conclusion for the uniformly random {\it balanced} 2-colouring $V_1\sqcup V_2$: with probability at least $\delta^2-o(1)$ (over the product measure) it has properties (i) and (ii). Let $\mathcal{G}$ be the set of all asymmetric graphs $G$ on $[2n]$ such that for a uniformly random balanced bipartition $[2n]=V_1\sqcup V_2$, properties (i) and (ii) hold
with probability at least $\frac{1}{2}\delta^2$. We get that $\mathcal{G}$ is non-empty. Take $G\in\mathcal{G}$. Note that a uniformly random 2-colouring conditioned on the event that it is balanced is a uniformly random balanced colouring. 

Consider a uniformly random colouring $V_1\sqcup V_2$ of the vertices of $G$. Assume that the colouring is balanced and that properties (i) and (ii) hold. Since the $k$-deck of $G[V_i]$ is monochromatic, we may reconstruct the entire colouring using the following simple algorithm: for every $i\in\{1,2\}$, consider all monochromatic cards in the deck of colour $i$ and reconstruct $G[V_i]$. Since there is only one representative of the isomorphism class of $G[V_i]$ in $G$, we get that with probability $\Omega(n^{-1/2})$ the colouring of $G$ is reconstructible from its full coloured $k$-deck --- contradiction.

\section{Proof of Theorem~\ref{th:random_graphs_colour}}
\label{sc:random_graphs_colour_proof}

The 0-statement is proven in Section~\ref{intro:rg_colour}. Here we prove the 1-statement. For that, in the next section we discuss properties of longest induced paths in the random graph, the main lemma stated in Section~\ref{sc:colour_claims} is proven in Section~\ref{rg:colour_claims_proof}. We shall use these properties in the reconstruction algorithm presented in Section~\ref{sc:colour_algo}. The algorithm finds in the deck cards that contain a specific asymmetric coloured induced path that has a unique representative in the entire random graph such that all the other vertices are divided into equivalence classes of cardinalities at most 2: two vertices are equivalent if they have equal neighbourhoods in the path.

\subsection{Longest induced paths}
\label{sc:colour_claims}

Let $C$ be a uniformly random colouring of the vertices of $G_n\sim G(n,1/2)$ in $r$ colours, and let $G^C_n$ be the coloured version of $G_n$. Let $k=\lfloor 2\log_2 n\rfloor  +8$. 

We denote by $\ell(G)$ the maximum number of vertices (we call it {\it length} for convenience despite of a slight abuse of the denotation) in an induced path in a graph $G$. Let $\ell^*=\ell(G_n)$. It is known that $\ell^*$ is concentrated in two consecutive points~\cite{DS}: whp
$$
 \ell^*\in\{\lfloor 2\log_2 n+0.9\rfloor,\lfloor 2\log_2 n+1.9\rfloor\}.
$$
%Let $P$ be a path, and let $N\subset V(P)$ be the neighbourhood of a vertex $u\notin P$ in $P$. We call $u$ {\it $P$-asymmetric}, if the only non-trivial automorphism of $P$ does not preserve $N$.

We shall use the following assertions.  

\begin{lemma}
Whp $G_n^C$ contains an induced $\ell^*$-path $P$ such that
\begin{itemize}
\item $P$ is not isomorphic (as a coloured graph) to any other induced $\ell^*$-path in $G_n^C$, and does not have non-trivial automorphisms;
\item the number of vertices $u\notin P$ in $G_n$ such that $G_n[V(P)\cup\{u\}]$ is not isomorphic to $G_n[V(P)\cup\{u'\}]$ for any other $u'\notin P$ equals $n(1-o(1))$.
\item there do not exist three different vertices $u,u',u''\notin P$ such that graphs $G_n[V(P)\cup\{u\}]$, $G_n[V(P)\cup\{u'\}]$, $G_n[V(P)\cup\{u''\}]$ are all isomorphic.
\end{itemize}
\label{cl:uniqlue_coloured_path}
\end{lemma}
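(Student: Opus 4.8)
The plan is to build the path $P$ greedily from a ``backbone'' uncoloured induced $\ell^*$-path and then use the freedom in the colouring to force all three properties. First I would recall from~\cite{DS} that whp $G_n$ contains induced paths of length $\ell^*=2\log_2 n+O(1)$ and, in fact, many of them (the count is $n^{\Theta(1)}$ or more), and that whp any two long induced paths share only a bounded number of vertices. Fix one such uncoloured path $Q$. The key observation is that, conditionally on $G_n$, the colouring $C$ restricted to $V(Q)$ is a uniformly random string in $r^{\ell^*}=n^{2+O(1)}$ possibilities, and this randomness is \emph{independent} of everything happening on $V(G_n)\setminus V(Q)$. So the first task is to show that, for a ``typical'' colouring of $V(Q)$, the coloured path $P=Q^C$ is rigid and separating in the required sense, and then a union bound over colourings (or a direct first-moment computation) upgrades ``typical'' to ``whp''.

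The three bullets are handled as follows. For the first bullet (no non-trivial coloured automorphism, and no other isomorphic induced $\ell^*$-path): a coloured path has a non-trivial automorphism only if its colour string is a palindrome, which happens with probability $r^{-\lfloor \ell^*/2\rfloor}=n^{-1-\Omega(1)}$, negligible; and for any \emph{other} induced $\ell^*$-path $Q'$ in $G_n$ (there are at most $n^{O(1)}$ of them, and $Q'\neq Q$ means $Q'$ has a vertex outside $V(Q)$ whose colour is independent of the colours on $V(Q)$, or more simply the two colour-strings must match which has probability $r^{-\ell^*}=n^{-2-\Omega(1)}$ per pair), the union bound over pairs kills the bad event since $n^{O(1)}\cdot n^{-2-\Omega(1)}=o(1)$ once we know $\ell^*\geq 2\log_2 n+c$ with $c$ large enough — this is exactly where the $+8$ slack in $k$ (hence in $\ell^*$) is used, to beat the polynomial number of competing paths. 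For the second and third bullets I would fix the path $P$ and look at the ``trace'' map $u\mapsto N_{G_n}(u)\cap V(P)$ together with the colour $C(u)$; two vertices $u,u'$ give isomorphic graphs $G_n[V(P)\cup\{u\}]$ essentially only when their traces coincide up to the (trivial, by bullet one) automorphism of $P$ and $C(u)=C(u')$. Since $G_n$ is a $G(n,1/2)$ and $P$ is fixed before exposing the edges from $V(G_n)\setminus V(P)$ to $V(P)$, each vertex's trace is a uniform subset of $V(P)$, i.e.\ uniform in $2^{\ell^*}=n^{2+\Omega(1)}$ values. A standard second-moment / occupancy argument then shows that the number of vertices sharing their $(\text{trace},\text{colour})$ type with some other vertex is $o(n)$ (balls-in-bins with $n$ balls and $\gg n^2$ bins), giving bullet two, and that no type is hit three times (the expected number of triples with equal type is $n^3\cdot (2^{\ell^*}r)^{-2}=o(1)$), giving bullet three.

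The main obstacle I expect is \textbf{decoupling the two sources of randomness cleanly and controlling the isomorphism condition precisely}: ``$G_n[V(P)\cup\{u\}]\cong G_n[V(P)\cup\{u'\}]$'' is a priori a statement about an isomorphism that need not fix $P$, so one must first rule out isomorphisms that move $P$ around inside $G_n[V(P)\cup\{u\}]$ — this is where bullet one (rigidity and uniqueness of $P$, valid whp over the edges of $G_n$) is invoked to reduce to trace-plus-colour comparisons, and one has to be careful that the vertices $u,u'$ themselves are not absorbed into a relocated copy of $P$ (handled because such a copy would be another induced $\ell^*$-path, already excluded, or a near-path differing in $O(1)$ vertices, controlled by the sparse-intersection property from~\cite{DS}). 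Once the isomorphism condition is pinned down to ``same trace up to $\mathrm{Aut}(P)=\{\mathrm{id}\}$ and same colour,'' the remaining estimates are routine first- and second-moment computations, and the slack in $\ell^*$ guarantees all error terms are $o(1)$.
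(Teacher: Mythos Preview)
Your proposal contains a fundamental misconception about $\ell^*$ that propagates through the whole argument. The quantity $\ell^*$ is the length of a longest induced path in $G_n$; by~\cite{DS} it satisfies $\ell^*\in\{\lfloor 2\log_2 n+0.9\rfloor,\lfloor 2\log_2 n+1.9\rfloor\}$ whp, and in particular $2^{\ell^*}=\Theta(n^2)$ and $r^{-\ell^*}=\Theta(n^{-2})$ when $r=2$. The ``$+8$'' lives in $k$, not in $\ell^*$, and it plays no role whatsoever in this lemma (it is used later so that a $k$-card has room for $P$ plus a few extra vertices). So your assertions ``$r^{-\ell^*}=n^{-2-\Omega(1)}$'', ``$2^{\ell^*}=n^{2+\Omega(1)}$'', and ``$r^{-\lfloor\ell^*/2\rfloor}=n^{-1-\Omega(1)}$'' are all false at $r=2$; you are working right at the threshold, with no slack.

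This matters most for bullet one. Your claim that ``whp any two long induced paths share only a bounded number of vertices'' is not in~\cite{DS} and is not true in the regime you need: two induced $\ell^*$-paths can share $\ell^*-1$ vertices, and then the probability their colour strings agree is $\Omega(r^{-1})$, not $r^{-\ell^*}$. Since the number of $\ell^*$-paths can be as large as $\Theta(n)$, a naive union bound over pairs does not close. The paper's proof first establishes, using only the edge randomness, that there \emph{exists} an induced $\ell^*$-path $P$ such that every other induced $\ell^*$-path meets $P$ in at most one vertex (for $\ell_0+1$ this holds for all pairs by a direct first-moment computation; for $\ell_0$ one can only show it for most paths, via the comparison $\mu_0=o(\mathbb E X_{\ell_0})$, and then picks one). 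Only after fixing such a $P$ does the paper expose the colouring, and now each competitor $P'$ shares at most one vertex with $P$, so $\mathbb P(P\cong_C P')\le 2r^{-\ell^*+1}=O(n^{-2})$, which the (polynomial) number of competitors survives.

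A secondary gap: bullets two and three are statements about the \emph{uncoloured} graphs $G_n[V(P)\cup\{u\}]$, so the colour of $u$ is irrelevant, and you cannot ``fix $Q$ and treat traces as fresh'' because whether $Q$ is a \emph{maximum} induced path depends on edges everywhere. The paper handles this by a union bound over all induced $\ell^*$-paths (using that their expected number is $O(n^{1.2})$), combined with Markov on the number of ``bad'' $u$; your sketch does not perform this step.
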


The proof of Lemma~\ref{cl:uniqlue_coloured_path} is given in Section~\ref{rg:colour_claims_proof}.

%\begin{claim}
%Whp, for every induced $\ell^*$-path $P$ in $G_n^C$, the number of vertices $u\notin V(P)$ such that $G_n^C[P\cup\{u\}]$ is not isomorphic to $G_n^C[P\cup\{u'\}]$ for any other $u'\notin V(P)$ equals $n(1-o(1))$.
%\label{cl:first_step_distinguish}
%\end{claim}

%\begin{claim}
%Whp, for every induced $\ell^*$-path $P$ in $G_n^C$, there do not exist three different vertices $u,u',u''\notin V(P)$ such that $G_n^C[P\cup\{u\}]$, $G_n^C[P\cup\{u'\}]$, $G_n^C[P\cup\{u''\}]$ are all isomorphic.
%\label{cl:second_step_distinguish}
%\end{claim}

\begin{claim}
Whp, for every $u\neq v$, $G_n$ has $(1/4+o(1))n$ neighbours of $u$ that are not adjacent to $v$.
\label{cl:neighborhood_difference}
\end{claim}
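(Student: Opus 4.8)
\textbf{Proof proposal for Claim~\ref{cl:neighborhood_difference}.}

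The plan is to prove the statement by a first-moment (union bound) argument over all ordered pairs $(u,v)$ of distinct vertices, combined with a standard Chernoff estimate for the binomial distribution. Fix an ordered pair $u\neq v$ and consider the set $W=V(G_n)\setminus\{u,v\}$, which has $n-2$ vertices. For each $w\in W$, the pair of indicator variables $(\mathbf 1[w\sim u],\mathbf 1[w\sim v])$ is determined by two independent fair coin flips (the edges $uw$ and $vw$), and these pairs are independent across $w\in W$. In particular, the event $\{w\sim u,\ w\not\sim v\}$ has probability exactly $1/4$, and the number $X_{u,v}$ of vertices $w\in W$ with $w\sim u$ and $w\not\sim v$ is distributed as $\mathrm{Bin}(n-2,1/4)$, so $\mathbb{E}X_{u,v}=(n-2)/4=(1/4+o(1))n$.

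Next I would apply a Chernoff bound: for any fixed constant $\varepsilon>0$,
$$
\mathbb{P}\Bigl(\bigl|X_{u,v}-\tfrac{n-2}{4}\bigr|>\varepsilon n\Bigr)\leq 2\exp\bigl(-c_\varepsilon n\bigr)
$$
for some constant $c_\varepsilon>0$ depending only on $\varepsilon$. Taking a union bound over the at most $n^2$ ordered pairs $(u,v)$, the probability that \emph{some} pair violates the bound is at most $2n^2\exp(-c_\varepsilon n)=o(1)$. Hence whp, for every ordered pair $u\neq v$ simultaneously, $X_{u,v}=(1/4+o(1))n$; since $\varepsilon$ was an arbitrary positive constant, a standard diagonalisation (taking $\varepsilon=\varepsilon_n\to 0$ slowly, e.g.\ $\varepsilon_n=n^{-1/3}$, so that $n^2\exp(-c_{\varepsilon_n}n)$ is still $o(1)$) upgrades this to the claimed $(1/4+o(1))n$ with an explicit $o(1)$ uniform over all pairs. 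Note that the colouring $C$ plays no role here, as the statement concerns only the adjacency structure of $G_n$.

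There is no real obstacle in this argument; it is entirely routine. The only mild point to be careful about is the uniformity over all $n^2$ pairs, which is exactly why the exponential tail bound (rather than, say, Chebyshev) is needed, and the slight bookkeeping to turn ``for every fixed $\varepsilon$'' into a single $(1/4+o(1))n$ statement — both handled by the union bound and the slowly-vanishing $\varepsilon_n$ device described above.
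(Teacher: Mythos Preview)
Your argument is correct and is exactly what the paper indicates: the claim is stated there as a standard fact following immediately from the Chernoff bound together with a union bound over all ordered pairs. There is nothing to add.
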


This claim is a standard fact about random graphs that follows immediately from the Chernoff bound and the union bound (see, e.g.,~\cite[Lemma 2.2]{Janson}).\\

Let $G^C$ be a (deterministic) $r$-coloured version of an asymmetric graph $G$ on $[n]$ such that $\ell=\ell(G)\leq k-6$, and let $\mathcal{D}$ be the full $k$-deck of $G^C$. Assume that $G^C$ satisfies the assertions of Lemma~\ref{cl:uniqlue_coloured_path} and Claim~\ref{cl:neighborhood_difference}, namely
\begin{itemize}
\item $G^C$ contains an induced $\ell$-path $P$ such that
\begin{itemize}
\item $P$ is not isomorphic (as a coloured graph) to any other induced $\ell$-path in $G^C$ and does not have non-trivial automorphisms;
\item the number of vertices $u\notin P$ in $G$ such that $G[V(P)\cup\{u\}]$ is not isomorphic to $G[V(P)\cup\{u'\}]$ for any other $u'\notin P$ is at least $0.9n$;
\item there do not exist three different vertices $u,u',u''\notin P$ such that $G[V(P)\cup\{u\}]$, $G[V(P)\cup\{u'\}]$, $G[V(P)\cup\{u''\}]$ are all isomorphic;
\end{itemize}
\item for every $u\neq v$, $G$ has at least $0.2 n$ neighbours of $u$ that are not adjacent to $v$.
\end{itemize}

\subsection{The colour reconstruction algorithm}
\label{sc:colour_algo}

Let us show that $C$ is reconstructible from $\mathcal{D}$. Due to Lemma~\ref{cl:uniqlue_coloured_path} and Claim~\ref{cl:neighborhood_difference}, it would immediately imply the 1-statement in Theorem~\ref{th:random_graphs_colour}. We introduce a colour reconstruction algorithm that outputs a graph $\tilde G^{\tilde C}$ isomorphic to $G^C$.\\

\noindent\textsc{Algorithm A}

\smallskip

\noindent\textsc{Input:} a deck $\mathcal{D}$ consisting of ${n\choose k}$ $r$-coloured graphs of size $k$ and an uncoloured graph $G$ on $[n]$ (though the algorithm does not use $G$).
\begin{enumerate}
\item[A1] Find the maximum length $\ell$ of an induced path in $D$ over all $D\in\mathcal{D}$. If $\ell>k-6$, then reject.

\item[A2] {\it Reconstruction of the colouring of a unique specific maximum path $P$.} 
  Find a subdeck $\mathcal{D}_0\subset\mathcal{D}$ of size ${n-\ell\choose k-\ell}$ such that, for some asymmetric coloured $\ell$-path $P$, 
  
  --- each $D\in\mathcal{D}_0$ contains a unique induced isomorphic copy of $P$ as a coloured graph, 
  
  --- any $D\in\mathcal{D}\setminus\mathcal{D}_0$ does not contain an induced isomorphic copy of $P$,

  --- there is no $D\in\mathcal{D}_0$ that contains vertices $u_1^*,u_2^*,u_3^*$ that have equal neighbourhoods in $P$. % (due to the unique non-trivial automorphism of the uncoloured $P$, there may be two different but {\it identical} neighbourhoods).
  
  If $\mathcal{D}_0$ does not exist, then reject. Set $\tilde G^{\tilde C}:=P$.
  
  %\item  If some $D\in\mathcal{D}_0$ contains vertices $u_1^*,u_2^*,u_3^*$ that have {\it identical} neighbourhoods in the image of $P$ (if the unique non-trivial automorphism of the uncolored $P$ is also the automorphism of the coloured $P$, then there may be two different but {\it identical} neighbourhoods), then reject.

%\item[A3] {\it Choice of the `direction' of $P$.} Take any card $D\in\mathcal{D}_0$ that contains a $P^*$-asymmetric vertex $u^*$, where $P^*$ is the induced $\ell$-path in $D$, satisfying the following property. There are exactly ${n-\ell-1\choose k-\ell-1}$ graphs $D'\in\mathcal{D}_0$ that contain a vertex $u^{**}$ and induced $\ell$-path $P^{**}$ such that $D[V(P^*)\cup u^*]\cong D'[V(P^{**})\cup u^{**}]$ (here we mean an isomorphism of uncoloured graphs). If there is no such $D$, then go to step A2 and try another $\mathcal{D}_0$.

%Add $\mathbf{u}$ to $\tilde G^{\tilde C}$, colour it in the colour of $u^*$, and draw exactly the same edges from $\mathbf{u}$ to $P$ as $u^*$ sends to $P^*$.

\item[A3] {\it Reconstruction of vertices that are distinguishable by $P$.}
  For every subset $U\subset V(P)$, add a vertex $u=u(U)$ to $\tilde G^{\tilde C}$ with edges from $u$ to every vertex from $U$ if and only if in $\mathcal{D}_0$ there are exactly ${n-\ell-1\choose k-\ell-1}$ graphs $D$ satisfying the following property: $D$ contains a vertex $u^*$ and an induced $\ell$-path $P^*$ such that the isomorphism (of coloured graphs) from $P$ to $P^*$ sends $U$ to the neighbourhood of $u^*$ in $P^*$. 
%  with the neighbourhood in the image $P^*$ of $P$ in $D$ coinciding with the image of $U$ under an isomorphism from uncoloured $P$ to uncoloured $P^*$. %, and this vertex is unique in all such $D$.
  Colour $u$ in the colour that $u^*$ has (obviously, all such $u^*$ would correspond to the same vertex in $G$, and thus would have the same colour). If the number of % $P$-asymmetric
   vertices added after this step is less than $0.9n$, then go to step A2 and try another $\mathcal{D}_0$. 
  
  We denote the set of these new (distinguishable by $P$) vertices by $\mathcal{U}$. % (we let $\mathbf{u}\in\mathcal{U}$). 
  
\item[A4] {\it Reconstruction of edges between vertices distinguishable by $P$.} For every two distinct vertices $u_1,u_2\in\mathcal{U}$, find a graph $D\in\mathcal{D}_0$ that contains copies $u_1^*,u_2^*$ of $u_1,u_2$ (with respect to $P^*$), and draw an edge between $u_1$ and $u_2$ in $\tilde G^{\tilde C}$ if and only if $u_1^*,u_2^*$ are adjacent in $D$.

\item[A5] {\it Reconstruction of vertices that are indistinguishable by $P$.}
  Initially set $\mathcal{U}'=\varnothing$. For every $D\in\mathcal{D}_0$ containing an induced $\ell$-path $P^*$ and a pair of vertices $u_1^*,u_2^*$ such that 

%    --- $D$ contains a copy (with respect to $P^*$) $\mathbf{u}^*$ of $\mathbf{u}$,
    
    --- a pair $(u_1,u_2)$ of copies  of $u_1^*,u_2^*$ (with respect to $P$) is not yet in $\mathcal{U}'$, 
  
    --- $u_1^*,u_2^*$ have {\it equal} neighbourhoods in $P^*$,
  
  add $u_1,u_2$ to $\tilde G^{\tilde C}$ (preserving the colours, the adjacency between them, and adjacencies between them and $P$). Add $(u_1,u_2)$ to $\mathcal{U}'$.
  
%\item[A6] {\it Reconstruction of edges between $P$-indistinguishable vertices.}  
%Fix a $P$-asymmetric $u\in\mathcal{U}$.
%For every $(u_1,u_2)\in\mathcal{U'}$, find $D\in\mathcal{D}_0$ that contains copies $u_1^*,u_2^*,u^*$ of $u_1,u_2,u$ repsectively, and reconstruct edges between $u_1,u_2$ and between them and $P$ in accordance with the adjacencies in $D$. Note that the fact that $u$ is $P$-asymmetric, allows us to fix the `direction' of $P$, and thus define edges between $u_1,u_2$ and $P$ uniquely.

%edges between $u_1^*,u_2^*,P^*$. Add $(u_1,u_2)$ to $\mathcal{U}'$.

  \item[A6] {\it Identification of witnesses (distinguishing between $P$-indistinguishable vertices).} For every $(u_1,u_2)\in\mathcal{U}'$, find the first graph $D\in\mathcal{D}_0$ such that it contains copies $u_1^*,u_2^*$ of $u_1,u_2$ and a vertex $w^*$ that has a copy $w\in\mathcal{U}$ (with respect to $P$) such that $w^*$ is adjacent to $u_1^*$ but not to $u_2^*$. If there is no such $D$, reject. We call the vertex $w$ a {\it witness} for $(u_1,u_2)$. Draw an edge between $u_1$ and $w$ (take care of the colours of $u_1,u_2$ if they are different).

   \begin{figure}[h!]
	\centering
	\includegraphics[scale=0.45]{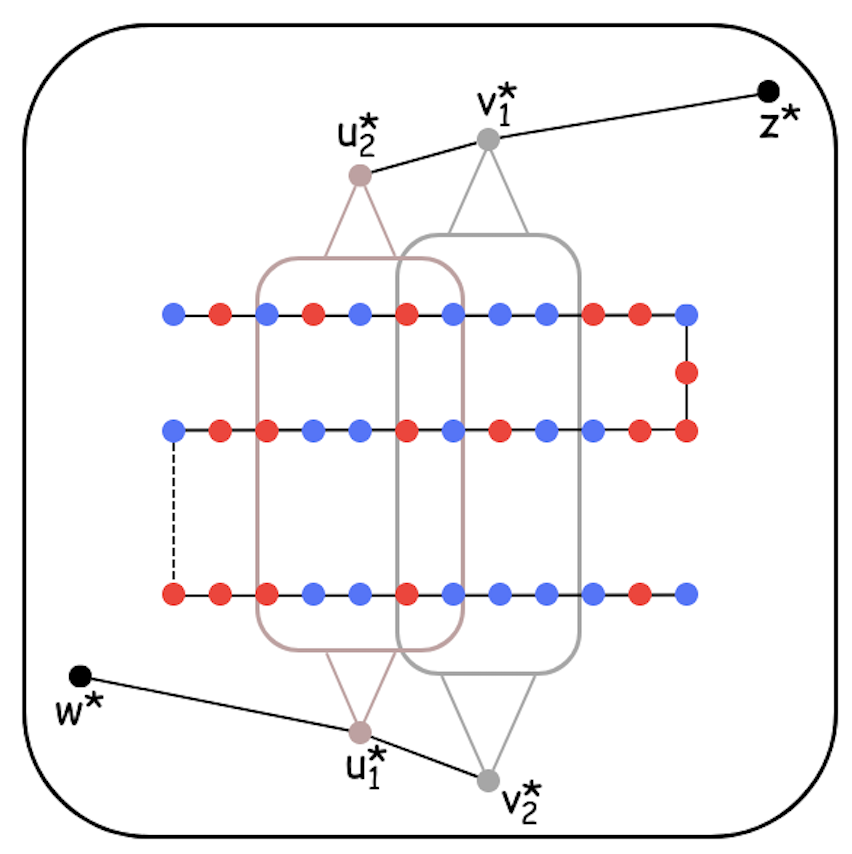}
	\caption{Last step of reconstruction, a single card from the deck $\mathcal{D}_0$ with two pairs of indistinguishable vertices.}
	\label{fig:random_last}
\end{figure}

  \item[A7] {\it Reconstruction of edges between $P$-distinguishable and $P$-indistin\-gui\-sha\-ble vertices.} For every $(u_1,u_2)\in\mathcal{U}'$, its witness $w\in U$, and every $u\in\mathcal{U}$ such that $u\neq w$, find a graph $D\in\mathcal{D}_0$ such that it contains an induced $\ell$-path $P^*$ and copies $w^*,u^*,u_1^*,u_2^*$ of $w,u,u_1,u_2$ (with respect to $P^*$) respectively (in particular, $w^*\sim u_1^*$ and $w^*\nsim u_2^*$). For $j\in\{1,2\}$, draw an edge between $u_j$ and $u$ in $\tilde G^{\tilde C}$ if and only if there is an edge between $u_j^*$ and $u^*$.

  \item[A8] {\it Reconstruction of edges between pairs of $P$-indistinguishable vertices.} For every two different pairs $(u_1,u_2),(v_1,v_2)\in\mathcal{U}'$ and their witnesses $w=w(u_1,u_2)$, $z=w(v_1,v_2)$, find a graph $D\in\mathcal{D}_0$ such that it contains copies $u_1^*,u_2^*$, $v_1^*,v_2^*$, $w^*,z^*$ of these six vertices, see Figure~\ref{fig:random_last}. For every $i\in\{1,2\}$, $j\in\{1,2\}$, draw an edge between $u_i$ and $v_j$ in $\tilde G^{\tilde C}$ if and only if there is an edge between $u_i^*$ and $v_j^*$.
\end{enumerate}

Obviously, for $G^C$, Algorithm A accepts the respective input and outputs a coloured graph. % isomorphic to $G^C$ (as a coloured graph).
From the description of the algorithm it is clear that, as soon as it outputs a coloured graph, it is isomorphic to $G^C$ and there is no other colouring of $G$ with the same deck. Indeed, assume that there is $C'$ such that $G^{C'}$ and $G^C$ are not isomorphic, while they have the same full $k$-deck. If these two colourings coincide on the specific $P$ distilled by the algorithm applied to $\mathcal{D}(G^C)$ at step A2 (i.e. $G^C[V(P)]=G^{C'}[V(P)]$), then clearly the entire colourings coincide. Finally, if the colourings of $P$ are different, then $G$ has a non-trivial automorphism since steps A3--A8 actually reconstruct $G$ --- a contradiction.

%as soon as  completing the proof.

%Clearly, $\mathcal{D}$ contains a subdeck $\mathcal{D}_0$ of size ${n-\ell\choose k-\ell}$ such that, for some $r$-coloured $\ell$-path $P$, each $D\in\mathcal{D}_0$ contains an isomorphic copy of $P$ as a subgraph, and for $D\in\mathcal{D}\setminus\mathcal{D}_0$ does not contain an isomorphic copy of $P$. 

\subsection{Proof of Lemma~\ref{cl:uniqlue_coloured_path}}
\label{rg:colour_claims_proof}

For $\ell\in\mathbb{N}$, let $X_{\ell}$ be the number of induced $\ell$-paths in $G_n$. Let 
\begin{equation}
\ell_0=\lfloor 2\log_2 n+0.9\rfloor.
\label{eq:ell_0_definition}
\end{equation}
Due to~\cite{DS}, the following is true
\begin{itemize}
\item $\mathbb{E}X_{j+1}/\mathbb{E}{X}_{j}=\Theta\left(\frac{1}{n}\right)$ for $j\in\{\ell_0,\ell_0+1\}$;
\item $\mathbb{E}X_{\ell_0+2}<n^{-0.8}$ for all $n$ large enough; 
\item for $j\in\{\ell_0,\ell_0+1\}$, if $\mathbb{E}X_j\to\infty$, then $\mathrm{Var}X_j=o((\mathbb{E}X_j)^2)$;
\item $\mathbb{E}X_{\ell_0}\to\infty$. 
\end{itemize}
Therefore, whp $\ell^*\in\{\ell_0,\ell_0+1\}$. In particular, the following two situations are possible:
\begin{enumerate}
\item $\mathbb{E}X_{\ell_0+1}\to\infty$, $\ell^*=\ell_0+1$ and $\mathbb{E}X_{\ell_0+1}=O(n^{0.2})$;
\item $\mathbb{E}X_{\ell_0}=O(n)$ and $\mathbb{E}X_{\ell_0+1}=O(1)$.
\end{enumerate}
Let us first prove that, in both cases, whp 
\begin{itemize}
\item there are no two vertex-overlapping induced paths of length $\ell_0+1$;
\item for every induced $(\ell_0+1)$-path $P$, the number of vertices $u\notin P$ such that $G_n[V(P)\cup\{u\}]$ is not isomorphic to $G_n[V(P)\cup\{u'\}]$ for any other $u'\notin P$ equals $n(1-o(1))$.
\item for every induced $(\ell_0+1)$-path $P$, there do not exist three different vertices $u,u',u''\notin P$ such that $G_n[V(P)\cup\{u\}]$, $G_n[V(P)\cup\{u'\}]$, $G_n[V(P)\cup\{u''\}]$ are all isomorphic.
\end{itemize}
The expected number of pairs of overlapping induced $(\ell_0+1)$-paths is at most
\begin{multline*}
\mathbb{E}X_{\ell_0+1}\times
 \sum_{j=1}^{\ell_0} {\ell_0+1\choose j}
 {n-\ell_0-1\choose \ell_0+1-j}\frac{(\ell_0+1)!}{2}2^{-j(\ell_0+1-j)-{\ell_0+1-j\choose 2}}=\\
=\left(\mathbb{E}X_{\ell_0+1}\right)^2\times
 \sum_{j=1}^{\ell_0}\frac{ {\ell_0+1\choose j}
 {n-\ell_0-1\choose \ell_0+1-j}}{{n\choose\ell_0+1}}2^{{j\choose 2}}.
\end{multline*}
Letting $F_j=\frac{ {\ell_0+1\choose j}
 {n-\ell_0-1\choose \ell_0+1-j}}{{n\choose\ell_0+1}}2^{{j\choose 2}}$, we get
 $$
  \frac{\partial}{\partial j}\ln\frac{F_{j+1}}{F_j}=\ln 2-\frac{2}{\ell_0+1-j}-\frac{1}{j+1}-\frac{1}{n-2\ell_0+j-1}
$$
implying that $\frac{F_{j+1}}{F_j}$ increases in $j$ on $[1,j^*)$ and decreases on $(j^*,\ell_0-1]$, where $j^*=\ell_0+1-2/\ln 2(1-o(1))$. On the other hand
$$
\frac{F_{\ell_0}}{F_{\ell_0-1}}=2^{\ell_0-1}\frac{4}{\ell_0(n-\ell_0-2)}=n^{1-o(1)},
$$
implying that $F_j$ changes its monotonicity at most ones, and, for large $j$, it increases. Therefore, $F_j\leq\max\{F_1,F_{\ell_0}\}$ for all $j$. Observe that 
$$
F_1\leq \frac{(\ell_0+1)^2}{n-\ell_0}\quad\text{ and }\quad
F_{\ell_0}=\frac{(\ell_0+1)(n-\ell_0-1)2^{-\ell_0}}{\mathbb{E}X_{\ell_0+1}}.
$$
Eventually, we get that the expected number of pairs of overlapping paths is at most
$$
\ell_0(\ell_0+1)\max\left\{(\mathbb{E}X_{\ell_0+1})^2\frac{\ell_0+1}{n-\ell_0},\mathbb{E}X_{\ell_0+1}(n-
\ell_0-1)2^{-\ell_0}\right\}=n^{-0.6+o(1)}
$$
since $\mathbb{E}X_{\ell_0+1}=O(n^{0.2})$
as needed. 

Now, let us fix an ordered tuple of vertices $P=(v_1,\ldots,v_{\ell_0+1})$ and a vertex $u\notin P$. Let $\mathcal{N}$ be the neighbourhood of $u$ in $P$ and $\mathcal{N}'$ be the image of $\mathcal{N}$ under the permutation $v_j\to v_{\ell_0+2-j}$.  Let us say that $u$ is {\it `bad'} with respect to $P$ if % either $\mathcal{N}=\mathcal{N}'$, or
 there exists a vertex $v\notin V(P)\cup u$ such that its neighbourhood in $P$ is {\it identical} to the neighbourhood $u$, i.e. equals $\mathcal{N}$ or $\mathcal{N}'$. Then  
\begin{equation}
\mathbb{P}(u\text{ is `bad' w.r.t. }P)\leq 1-(1-2^{-\ell_0})^n=O(n^{-1}).
\label{eq:bad_u}
\end{equation}
Thus, the expected number of `bad' $u$  is $O(1)$. By Markov's inequality, the number of such `bad' $u$ is bigger than $\sqrt{n}$ with probability $\pi_0=O(n^{-1/2})$. Thus, the expected number of induced $(\ell_0+1)$-paths having  at least $\sqrt{n}$ `bad' vertices $u$ equals $\pi_0\mathbb{E}X_{\ell_0+1}=O(n^{-0.3})$. By Markov's inequality, whp there are no such paths.

Finally, for an ordered tuple $(v_1,\ldots,v_{\ell_0+1})$ the probability that fixed vertices $u,u',u''$ have identical neighbourhoods in $(v_1,\ldots,v_{\ell_0+1})$ is 
$O(n^{-4})$. Thus, the expected number of tuples $(P,u,u',u'')$, where $P$ is an induced $(\ell_0+1)$-path and vertices $u,u',u''$ have identical neighbourhoods in $P$ equals $O(n^{-0.8})$. By Markov's inequality, whp there are no such tuples.\\

We now consider $\ell_0$-paths. Note that $\ell^*=\ell_0$ with probability bounded away from 0 only when $\mathbb{E}X_{\ell_0}=O(n)$. Thus, we further assume that this is indeed the case. Since $\mathbb{E}X_{\ell_0}\to\infty$ and $\mathrm{Var}X_{\ell_0}=o((\mathbb{E}X_{\ell_0})^2)$, we get that $X_{\ell_0}/\mathbb{E}X_{\ell_0}\stackrel{{\mathbb{P}}}\to 1$. Let
\begin{itemize}
\item $\mu_0$ be the expected number of pairs of induced $\ell_0$-paths having at least 2 common vertices; \item $\mu_1$ be the expected number of induced $\ell_0$-paths $P$ having more than $\sqrt{n}$ `bad' $u$ (w.r.t. $P$); 
\item $\mu_2$ be the expected number of tuples $(P,u,u',u'')$, where $P$ is an induced $\ell_0$-path and vertices $u,u',u''$ have identical neighbourhoods in $P$.
\end{itemize}
Let us show that 
\begin{equation}
\max\{\mu_0,\mu_1,\mu_2\}=o(\mathbb{E}X_{\ell_0})
\label{eq:expectations_small}
\end{equation}
implying 

\begin{claim}
Whp $G_n$ contains an induced $\ell^*$-path $P$ such that
\begin{itemize}
\item any other induced $\ell^*$-path shares with $P$ at most 1 vertex;
\item the number of vertices $u\notin P$ such that $G_n[V(P)\cup\{u\}]$ is not isomorphic to $G_n[V(P)\cup\{u'\}]$ for any other $u'\notin P$ equals $n(1-o(1))$;
\item there do not exist three different vertices $u,u',u''\notin P$ such that graphs $G_n[V(P)\cup\{u\}]$, $G_n[V(P)\cup\{u'\}]$, $G_n[V(P)\cup\{u''\}]$ are all isomorphic.
\end{itemize}
\label{cl:intersections_max_paths}
\end{claim}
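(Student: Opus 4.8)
\emph{Strategy.} The plan is to prove the estimate~\eqref{eq:expectations_small}, $\max\{\mu_0,\mu_1,\mu_2\}=o(\mathbb{E}X_{\ell_0})$, in the regime $\mathbb{E}X_{\ell_0}=O(n)$, and then to deduce the claim. (If $\mathbb{E}X_{\ell_0}=\omega(n)$ then, by the facts recalled from~\cite{DS}, $\ell^*=\ell_0+1$ whp, and the claim follows at once from the already-established properties of induced $(\ell_0+1)$-paths — they are pairwise vertex-disjoint whp and each is ``good'' — by taking $P$ to be any longest induced path.) Granting~\eqref{eq:expectations_small}, call an induced $\ell_0$-path $P$ \emph{good} if (a) no other induced $\ell_0$-path meets it in $\ge 2$ vertices, (b) $P$ has at most $\sqrt n$ `bad' vertices, and (c) no three vertices $u,u',u''\in V(G_n)\setminus V(P)$ have $N_P(u),N_P(u'),N_P(u'')$ lying in a common orbit of the reversal $\rho$ of $P$. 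The number of induced $\ell_0$-paths failing (a) has expectation $\le 2\mu_0$, those failing (b) expectation $\le\mu_1$, and those failing (c) expectation $\le\mu_2'$, where $\mu_2'$ is the obvious variant of $\mu_2$ in which ``identical neighbourhoods'' is relaxed to ``neighbourhoods in a common $\rho$-orbit'' (so $\mu_2\le\mu_2'$). Each of these is $o(\mathbb{E}X_{\ell_0})$, so by Markov and $X_{\ell_0}=(1+o(1))\mathbb{E}X_{\ell_0}$ whp, whp a good $P$ exists, and (last paragraph) a good $P$ satisfies all three bullets.

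\emph{The estimates $\mu_1,\mu_2,\mu_2'$.} Since $\ell_0=\lfloor 2\log_2 n+0.9\rfloor$, one has $2^{\ell_0}=\Theta(n^2)$. For a fixed ordered induced $\ell_0$-path $P$ and fixed distinct $u,u',u''\notin V(P)$, conditionally on ``$P$ induced'' the edges between $\{u,u',u''\}$ and $V(P)$ remain independent and uniform, so the probability that $N_P(u),N_P(u'),N_P(u'')$ all lie in one $\rho$-orbit is $O(2^{-2\ell_0})=O(n^{-4})$; summing over the $O(n^3)$ triples gives $\mu_2'=O(\mathbb{E}X_{\ell_0}/n)$. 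Likewise, by~\eqref{eq:bad_u} the conditional expected number of `bad' vertices given ``$P$ induced'' is $O(1)$, so by Markov the conditional probability of having more than $\sqrt n$ of them is $O(n^{-1/2})$, whence $\mu_1=O(\mathbb{E}X_{\ell_0}/\sqrt n)$.

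\emph{The estimate $\mu_0$ (the crux).} It suffices that a fixed induced $\ell_0$-path $P_0=v_1\cdots v_{\ell_0}$ has, in expectation, $o(1)$ other induced $\ell_0$-paths meeting it in $\ge 2$ vertices; then $\mu_0=\mathbb{E}X_{\ell_0}\cdot o(1)$. If such a $P'$ shares a $j$-set $S\subseteq V(P_0)$ with $P_0$, then $G_n[S]=P_0[S]$ is forced to be a disjoint union of, say, $t$ sub-paths of $P_0$, which $P'$ must realise as $t$ segments; since non-consecutive vertices of $P_0$ are non-adjacent, no two segments can be consecutive in $P'$, so each of the $t-1$ internal gaps of $P'$ contains one of the $\ell_0-j$ new vertices and $t\le\ell_0-j+1$. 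Hence the number of admissible ``shapes'' of $P'$ is at most $2^t\,t!\,\binom{\ell_0-j+1}{t}(\ell_0-j)!$, the number of choices of $S$ realising $t$ segments is $O\!\big(\binom{j-1}{t-1}\binom{\ell_0-j+1}{t}\big)$, the number of choices of the new vertices is $\le n^{\ell_0-j}$, and conditionally on ``$P_0$ induced'' each shape is realised as an induced path with probability exactly $2^{-(\binom{\ell_0}{2}-\binom{j}{2})}$ (only the $\binom{\ell_0}{2}-\binom{j}{2}$ pairs meeting the new vertices remain undetermined). A short computation using $\tfrac{\ell_0!}{2}2^{-\binom{\ell_0}{2}}=\mathbb{E}X_{\ell_0}/\binom{n}{\ell_0}$, $\mathbb{E}X_{\ell_0}=O(n)$ and $2^{\ell_0}=\Theta(n^2)$ bounds the contribution of a fixed $j$ by $n^{-\Omega(1)}\mathrm{poly}(\log n)$, the extremes $j=2$ and $j=\ell_0-1$ dominating; a monotonicity-in-$j$ argument in the style of the $(\ell_0+1)$-path computation (examining $\partial_j\ln$ of the ratio of consecutive terms) then yields $\sum_{2\le j\le\ell_0-1}(\cdot)=o(1)$. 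I expect this to be the main obstacle: the crude ``$\tfrac{\ell_0!}{2}$ orderings'' bound adequate for $(\ell_0+1)$-paths becomes $n^{\omega(1)}$ for $j$ close to $\ell_0$, so one must genuinely use the rigidity imposed by a large shared set.

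\emph{From~\eqref{eq:expectations_small} to the claim.} Fix a good $\ell_0$-path $P$ (which exists whp, in the regime where $\ell^*=\ell_0$ is possible). For any $u\notin V(P)$, $P$ is the \emph{unique} induced $\ell_0$-path in $G_n[V(P)\cup\{u\}]$: any other one would be an induced $\ell_0$-path of $G_n$ sharing $\ge\ell_0-1\ge 2$ vertices with $P$, contradicting (a). Hence any isomorphism $G_n[V(P)\cup\{u\}]\to G_n[V(P)\cup\{u'\}]$ carries $P$ onto $P$ and $u$ to $u'$ and restricts to an automorphism of $P$ — the identity or $\rho$; consequently $G_n[V(P)\cup\{u\}]\cong G_n[V(P)\cup\{u'\}]$ if and only if $N_P(u')\in\{N_P(u),\rho(N_P(u))\}$. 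Therefore the vertices $u$ for which $G_n[V(P)\cup\{u\}]$ is not isomorphic to any $G_n[V(P)\cup\{u'\}]$, $u'\ne u$, are exactly the non-`bad' ones, of which there are $\ge n-\ell_0-\sqrt n=n(1-o(1))$ by (b); and a triple $u,u',u''$ with $G_n[V(P)\cup\{u\}]$, $G_n[V(P)\cup\{u'\}]$, $G_n[V(P)\cup\{u''\}]$ pairwise isomorphic would have $N_P(u),N_P(u'),N_P(u'')$ all in one $\rho$-orbit, which is forbidden by (c). This establishes the three bullets for $P$ and completes the proof.
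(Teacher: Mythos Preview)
Your strategy is exactly the paper's: reduce to the estimate $\max\{\mu_0,\mu_1,\mu_2\}=o(\mathbb EX_{\ell_0})$ and conclude via Markov together with $X_{\ell_0}/\mathbb EX_{\ell_0}\stackrel{\mathbb P}\to 1$. Your treatments of $\mu_1$ and $\mu_2$ (via $\mu_2'$) coincide with the paper's, and your final paragraph---showing that for a ``good'' $P$ every isomorphism $G_n[V(P)\cup\{u\}]\to G_n[V(P)\cup\{u'\}]$ must restrict to $\mathrm{id}$ or $\rho$ on $P$, whence the three bullets---is more explicit than the paper and correct.

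The one substantive difference is your handling of $\mu_0$. The paper simply reuses the crude $\tfrac{\ell_0!}{2}$-orderings bound and asserts that the resulting $(\mathbb EX_{\ell_0})^2F_j'$ is $O(\tfrac{\log^5 n}{n}\,\mathbb EX_{\ell_0})$ at the endpoint $j=\ell_0-1$. In fact that endpoint equals $\mathbb EX_{\ell_0}\cdot\tfrac{\ell_0\cdot\ell_0!\,(n-\ell_0)}{2}\,2^{-(\ell_0-1)}$, which carries the superpolynomial factor $\ell_0!=n^{\Theta(\log\log n)}$; the paper appears to have dropped it (the same slip occurs in the paper's $(\ell_0+1)$-computation, so your remark that the crude bound is ``adequate for $(\ell_0+1)$-paths'' is not quite right either). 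Your refinement---using that a shared $j$-set must induce $t\le\ell_0-j+1$ sub-paths of $P_0$, bounding the compatible orderings of $P'$ by $2^t t!\binom{\ell_0-j+1}{t}(\ell_0-j)!$---is therefore genuinely needed, and your count is correct.

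What is missing is the ``short computation''. The monotonicity-in-$j$ device you borrow is tailored to the single-parameter family $F_j'$; once the sum over $t$ enters, unimodality in $j$ is not automatic, and you should not simply assert that ``the extremes dominate''. One clean way to finish: for $j\le\ell_0/2$ keep the paper's crude bound (it is essentially tight there, and the paper's monotonicity argument gives $\mathbb EX_{\ell_0}\sum_{j\le\ell_0/2}F_j'\le \ell_0\,\mathbb EX_{\ell_0}F_2'=O(\ell_0^5/n)$); for $j>\ell_0/2$ use your refined bound and note that with $m:=\ell_0-j<\ell_0/2$ and $t\le m+1$, the combinatorial factor is at most $(C\ell_0)^{O(m)}$ while $n^{m}2^{-jm-\binom{m}{2}}=(n2^{-\ell_0})^m2^{m(m+1)/2}=\Theta(n^{-m})2^{m(m+1)/2}$, so the $m$th term is at most $\bigl((C\ell_0)^{O(1)}2^{(m+1)/2}/n\bigr)^m$, which for $m<\ell_0/2$ is at most $(n^{-1/2+o(1)})^m$; summing over $m\ge 1$ gives $o(1)$. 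Carrying this out would close the gap in both your argument and the paper's.
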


\begin{proof}
It remains to show that  $\mathbb{E}X_{\ell_0}=O(n)$ implies \eqref{eq:expectations_small}. In the same way as above, we get
$$
\mu_0\leq(\mathbb{E}X_{\ell_0})^2\sum_{j=2}^{\ell_0-1}F_j'\leq\ell_0(\mathbb{E}X_{\ell_0})^2\max\{F_2',F_{\ell_0-1}'\},
$$
where $F_j'=\frac{ {\ell_0\choose j}
 {n-\ell_0\choose \ell_0-j}}{{n\choose\ell_0}}2^{{j\choose 2}}$. Thus,
$$
\mu_0  \leq \ell_0^2\max\left\{(\mathbb{E}X_{\ell_0})^2\frac{\ell_0^3}{(n-\ell_0+1)^2},\mathbb{E}X_{\ell_0}(n-
\ell_0)2^{-\ell_0+1}\right\}
=O\left(\frac{\log^5 n}{n}\mathbb{E}X_{\ell_0}\right).
$$
Let us fix an ordered tuple of vertices $P=(v_1,\ldots,v_{\ell_0})$ and a vertex $u$. Clearly, the $O(1/n)$-bound in \eqref{eq:bad_u} holds here as well.
Thus, the expected number of `bad' $u$  is $O(1)$. Let $\pi_1=O(n^{-1/2})$ be the probability that the number of `bad' $u$ with respect to $P$ is bigger than $\sqrt{n}$. We get
$$
\mu_1=\pi_1\mathbb{E}X_{\ell_0}=o(\mathbb{E}X_{\ell_0}).
$$
Finally, in the same way as above, the probability that fixed vertices $u,u',u''$ have identical neighbourhoods in $P$ is 
$O(n^{-4})$. Thus, 
$$
\mu_2=O(n^{-1})\mathbb{E}X_{\ell_0},
$$
as needed.
\end{proof}

We now finish the proof of Lemma~\ref{cl:uniqlue_coloured_path}. Let us expose edges of $G_n$ and fix an induced path $P$ of length $\ell^*$ satisfying the properties listed in Claim~\ref{cl:intersections_max_paths}. Let $\mathcal{P}$ be the set of all induced $\ell^*$-paths other than $P$. From the above, we may assume $|\mathcal{P}|<n^{1.1}$ (by Markov's inequality, it holds whp in $G_n$). 

We colour randomly the vertices of $G_n$ in $r$ colours, and denote the random colouring by $C$. Whp the colouring of $P$ is asymmetric, i.e. the coloured version of $P$ does not have non-trivial isomorphisms. Indeed, the probability that it is symmetric is exactly $r^{-\lfloor\ell^*/2\rfloor}=o(1)$. For any $P'\in\mathcal{P}$, we have that 
$$
\mathbb{P}(P\cong_C P')\leq 2r^{-\ell_0+1}=O(n^{-2})
$$
since $P'$ shares at most 1 vertex with $P$ due to the choice of $P$.
By the union bound, every $P'\in\mathcal{P}$ is not isomorphic to $P$ as a coloured graph whp, completing the proof.

\section{Proof of Theorem~\ref{th:random_graphs_rec}}
\label{sc:random_graphs_rec_proof}

Note that Algorithm A does not use the input graph $G$. Moreover, colours of vertices are beneficial only on step A2, and on all the other steps we just copy colours of vertices from the deck, but do not use them for the graph reconstruction (though we use the fact that $P$ is asymmetric as a coloured graph). In this section, we show how to modify the step A2 by colouring vertices of a longest path $P$ in 8 colours artificially using a 3-tuple of vertices outside $P$ that have a unique neighbourhood in $P$. This colouring would make $P$ asymmetric. For that, we need the following claim.

\begin{claim}
Whp, for every induced $\ell^*$-path $P$ in $G_n\sim G(n,1/2)$, there exist three vertices $u_1,u_2,u_3\notin P$ such that
\begin{itemize}
\item for any $u\in\{u_1,u_2,u_3\}$ and  $u'\notin V(P)\cup\{u\}$, the graphs $G_n[V(P)\cup\{u\}]$ and $G_n[V(P)\cup\{u'\}]$ are not isomorphic,
\item for any other induced $\ell^*$-path $P'$ in $G_n$ that shares with $P$ at most 1 vertex there exists $u\in\{u_1,u_2,u_3\}$ such that, for any vertex $u'\notin P'$, graphs $G_n[V(P)\cup\{u\}]$ and $G_n[V(P')\cup\{u'\}]$ are not isomorphic,
\item the graph $G_n[V(P)\cup\{u_1,u_2,u_3\}]$ is asymmetric.
\end{itemize}
\label{cl:colouring_paths}
\end{claim}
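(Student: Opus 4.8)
The plan is to establish the statement for both possible values $\ell^*\in\{\ell_0,\ell_0+1\}$ at once, using only two coarse facts from Section~\ref{rg:colour_claims_proof}: the expected number $\mathbb{E}X_{\ell^*}$ of induced $\ell^*$-paths is $O(n)$, and $\ell^*\ge 2\log_2 n-0.1$, so that $2^{-\ell^*}=O(n^{-2})$ and $n^{6}2^{-3\ell^*}=O(1)$. Since $G_n$ whp has at most $2\mathbb{E}X_{\ell^*}$ induced $\ell^*$-paths, all the work is a first-moment computation ranging over pairs (and triples) of induced $\ell^*$-paths plus a handful of extra vertices. By the estimates in the proof of Lemma~\ref{cl:uniqlue_coloured_path}, upgraded by a second-moment bound (for each fixed induced $\ell^*$-path $P$, the number of vertices $u\notin V(P)$ that either share an isomorphic one-vertex extension with another vertex or have a reversal-invariant neighbourhood in $P$ has first and second moments $O(1)$), whp every induced $\ell^*$-path $P$ in $G_n$ has a set $\mathcal A(P)$ with $|\mathcal A(P)|\ge n/2$ of \emph{admissible} vertices $u\notin V(P)$: those for which $G_n[V(P)\cup\{u\}]\not\cong G_n[V(P)\cup\{u'\}]$ for every $u'\notin V(P)\cup\{u\}$ and for which $N(u)\cap V(P)$ is not fixed by the reversal of $P$. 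Condition~1 is automatic for admissible $u_i$, and the number of admissible triples of $P$ is $\binom{|\mathcal A(P)|}{3}=\Theta(n^3)$, so it suffices to prove that whp, for every $P$, the number of admissible triples violating condition~2 or condition~3 is $o(n^3)$.

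For condition~2, call an admissible triple $(u_1,u_2,u_3)$ of $P$ \emph{compromised} if there is an induced $\ell^*$-path $P'$ with $|V(P)\cap V(P')|\le1$ and vertices $u'_1,u'_2,u'_3\notin V(P')$ with $G_n[V(P)\cup\{u_j\}]\cong G_n[V(P')\cup\{u'_j\}]$ for all $j$. I will bound the expectation of the number of tuples $\bigl(P,(u_1,u_2,u_3),P',(u'_1,u'_2,u'_3)\bigr)$ realising this. The key point is that a graph on $\ell^*+1$ vertices containing an induced $\ell^*$-path contains at most $\ell^*+1$ of them, so once the induced paths $P,P'$ and the neighbourhoods $N(u_j)\cap V(P)$ are exposed, for each $j$ there are at most $2(\ell^*+1)$ candidate isomorphisms $G_n[V(P)\cup\{u_j\}]\to G_n[V(P')\cup\{u'_j\}]$, each of which forces $N(u'_j)\cap V(P')$ to equal one prescribed set --- an event of probability $O(2^{-\ell^*})$, the relevant edges being fresh since $|V(P)\cap V(P')|\le1$. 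Taking a union over the $O(\log^3 n)$ combinations of candidates and using independence across $j$ when $u'_1,u'_2,u'_3$ are distinct (which, given admissibility, they must be), the probability that fixed $P,P'$, a fixed admissible triple and fixed witnesses realise a compromise is $O(\log^3 n\cdot 2^{-3\ell^*})$, independently of $P,P'$ being induced paths. Summing over the $\le n^3$ triples, the $\le n^3$ witness triples, and using $\sum_{P,P'}\mathbb{P}(P,P'\text{ both induced }\ell^*\text{-paths})=O\bigl((\mathbb{E}X_{\ell^*})^2\bigr)$, the expected number of realising tuples is
$$O\bigl(\log^3 n\cdot(\mathbb{E}X_{\ell^*})^2\cdot n^{6}\cdot 2^{-3\ell^*}\bigr)=O\bigl(\log^3 n\cdot(\mathbb{E}X_{\ell^*})^2\bigr)=O\bigl(n^2\log^3 n\bigr)=o(n^3).$$
By Markov's inequality, whp no induced $\ell^*$-path is the first coordinate of $cn^3$ such tuples for a suitable constant $c>0$, so whp every $P$ has $o(n^3)$ compromised admissible triples, hence an uncompromised one.

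For condition~3 I argue similarly on tuples $\bigl(P,(u_1,u_2,u_3),\sigma\bigr)$ with $(u_1,u_2,u_3)$ an admissible triple of an induced $\ell^*$-path $P$ and $\sigma$ a nontrivial automorphism of $H:=G_n[V(P)\cup\{u_1,u_2,u_3\}]$. If $\sigma$ fixes $V(P)$ setwise, then $\sigma|_{V(P)}$ is the identity or the reversal of $P$: in the first case admissibility (distinct neighbourhoods in $V(P)$) forces $\sigma$ to fix each $u_i$, so $\sigma=\mathrm{id}$; in the second, admissibility (no $u_j$ has neighbourhood equal to the reversal of $N(u_i)\cap V(P)$, and $N(u_i)\cap V(P)$ is itself not reversal-invariant) forces $\sigma(u_i)\notin\{u_1,u_2,u_3\}$, impossible. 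Otherwise $\sigma(V(P))$ is an induced $\ell^*$-path of $H$ other than $V(P)$; there are only $O(\log^3 n)$ such $\sigma$, and for each fixed one, ``$\sigma\in\mathrm{Aut}(H)$'' forces the neighbourhood of some $u_{i^*}$ to agree with a prescribed set on at least $\ell^*-O(1)$ vertices of $V(P)$, of probability $O(2^{-\ell^*})$ given that $P$ is an induced path. Summing over the $\le n^3$ triples, $O(\log^3 n)$ automorphisms, and induced $\ell^*$-paths yields expectation $O\bigl(\log^3 n\cdot n^3\cdot 2^{-\ell^*}\cdot\mathbb{E}X_{\ell^*}\bigr)=O\bigl(n^2\log^3 n\bigr)=o(n^3)$, so whp every $P$ has $o(n^3)$ admissible triples whose extension is not asymmetric. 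Discarding in addition the $O(n^{2.5})$ triples that contain a non-admissible vertex, a triple satisfying all three conditions survives for every induced $\ell^*$-path $P$, whp.

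The main obstacle, and the reason the argument cannot just imitate the proof of Lemma~\ref{cl:uniqlue_coloured_path}, is the requirement that $P$ range over \emph{every} induced $\ell^*$-path while, in the regime $\ell^*=\ell_0$, the number of such paths may be $\Theta(n)$: this rules out any ``per-rival'' statement like ``each $P'$ compromises at most two admissible vertices'', since one cannot exclude that every admissible vertex of some $P$ is compromised by one rival path or another. The fix is to count compromised \emph{triples} rather than vertices, so that the two extra witness-neighbourhood events contribute a factor $2^{-2\ell^*}=O(n^{-4})$ that beats the $(\mathbb{E}X_{\ell^*})^2=O(n^2)$ coming from the pair of rival paths with a factor $n^{1-o(1)}$ to spare; verifying this numerical balance, and keeping the possible non-uniqueness of maximum induced paths inside an extension graph harmless (via the ``at most $\ell^*+1$ maximum paths'' observation, which only costs the $(\log n)^{O(1)}$ factors above), is the only real content.
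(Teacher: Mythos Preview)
Your argument is essentially correct but takes a genuinely different route from the paper's. The paper fixes, for every $\ell$-tuple $P$, six \emph{deterministic} candidate vertices $u_1^P,\ldots,u_6^P$ (say, the first six vertices outside $P$), shows that whp in every induced $\ell^*$-path each of the three pairs $(u_{2j-1}^P,u_{2j}^P)$ contains a ``good'' vertex (non-reversal-invariant, $\ge 7$ neighbours in $P$, unique extension), and then shows that whp \emph{no} pair of paths $(P,P')$ admits a fully matched triple among the resulting eight candidate triples --- the $O(n^{-3})$ per-pair probability beats the $O(n^{2.4})$ pairs outright. Asymmetry is then free: the ``$\ge 7$ neighbours'' condition forces any automorphism to fix $V(P)$ setwise, after which the other goodness conditions finish it. Your approach instead keeps all $\Theta(n^3)$ admissible triples in play and bounds the global expected number of bad tuples by $o(n^3)$, concluding that every path keeps $\Omega(n^3)$ good triples. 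This is heavier but works; the paper's trick of pinning down $O(1)$ candidates per path makes the union bounds tighter and the asymmetry step trivial.

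Two small points to clean up. First, $\mathbb{E}X_{\ell}$ is $O(n^{1.2})$, not $O(n)$, for $\ell\in\{\ell_0,\ell_0+1\}$ in general (see the paper's own use of this bound in the proof); this does not break your numerics since $n^{2.4}\log^3 n=o(n^3)$ still holds. Second, your final sentence about ``discarding in addition the $O(n^{2.5})$ triples that contain a non-admissible vertex'' is both unnecessary and not justified by the second-moment bound you invoke: with $\mathbb{E}[B(P)^2]=O(1)$ you only get $B(P)\le n/2$ for all $P$ whp, hence up to $\Theta(n^3)$ non-admissible triples. But you never need this step --- you are already working only with admissible triples, and you have shown that among those at most $o(n^3)$ are compromised and at most $o(n^3)$ fail asymmetry, which suffices.
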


\begin{proof}
Let $\ell\in\{\ell_0,\ell_0+1\}$. For any $\ell$-tuple of vertices $P$ in $[n]$, let us choose arbitrarily six distinct vertices $u^P_1,\ldots,u^P_6$ outside $P$. Let us say that a vertex $u\notin P$ is {\it `bad'} with respect to $P$, if one of the following conditions holds:
\begin{itemize}
\item $G_n[V(P)\cup u]$ has a non-trivial automorphism,
\item $u$ has less than 7 neighbours in $P$,
\item there exists a vertex $v\notin P\cup\{u\}$ that is {\it identical} to $u$ with respect to $P$ (i.e. graphs $G_n[P\cup\{u\}]$ and $G_n[P\cup\{v\}]$ are isomorphic).
\end{itemize}
The probability that both $u^P_1$ and $u^P_2$ are `bad' with respect to $P$ equals $O(n^{-2})$. From Section~\ref{rg:colour_claims_proof}, we know that the expected number of induced $\ell$-paths equals $O(n^{1.2})$. Therefore, the expected number of induced $\ell$-paths $P$ such that for some $j\in\{1,2,3\}$ both $u^P_{2j-1},u^P_{2j}$ are `bad' with respect to $P$ equals $O(n^{-0.8})$. By Markov's inequality, whp there are no such induced $\ell$-paths.

For every $\ell$-tuple $P$, let $\mathcal{U}^P$ be the set of all  3-tuples having a single vertex in each of the three pairs $(u^P_1,u^P_2),$ $(u^P_3,u^P_4)$ and $(u^P_5,u^P_6)$. Obviously, $|\mathcal{U}^P|=8$ for every $P$. 
For two fixed $\ell$-tuples $P,P'$ that share at most one vertex, the probability that there exists $(u_1,u_2,u_3)\in\mathcal{U}^{P}$ such that, for every $i\in\{1,2,3\}$, there is a vertex $u'\notin P'$ such that $G_n[P\cup\{u_i\}]$ and $G_n[P'\cup\{u'\}]$ are isomorphic is at most $O(n^{-3})$. Therefore, the expected number of induced $\ell$-paths $P,P'$ satisfying the above property equals $O(n^{-0.6})$. By Markov's inequality, whp there are no such pairs of induced $\ell$-paths.

Thus, whp for every induced $\ell^*$-path $P$ in $G_n$ there is a tuple $(u_1,u_2,u_3)\in\mathcal{U}^P$ of vertices that are not `bad' with respect to $P$ and ``distinguishes'' $P$ from any other induced $\ell^*$-path that shares with $P$ at most 1 vertex. It remains to show that $H:=G_n[V(P)\cup\{u_1,u_2,u_3\}]$ is asymmetric. Let $f$ be an automorphism of $H$. Then $f$ either preserves $P$ or turns it over since every $u_i$ has at least 7 neighbours in $P$. But then, since all $u_i$ are not `bad', they all `see' $P$ differently. Thus, $f$ preserves all $u_i$. But then $f$ also preserves all vertices in $P$ since $G_n[V(P)\cup u]$ are asymmetric.
\end{proof}

Let $k=\lfloor 2\log_2 n\rfloor+11.$ Let $G$ be a (deterministic)  graph on $[n]$ such that the maximum length (number of vertices) of an induced path in this graph is $\ell=\ell(G)\leq k-9$, and let $\mathcal{D}$ be the full $k$-deck of $G$. Assume that $G$ satisfies the assertions of Claims~\ref{cl:neighborhood_difference},~\ref{cl:intersections_max_paths},~\ref{cl:colouring_paths}, namely
\begin{itemize}
\item $G$ contains an induced $\ell$-path $P$ and vertices $u_1,u_2,u_3\notin P$ such that
\begin{itemize}
\item any other induced $\ell$-path shares with $P$ at most 1 vertex,
\item the number of vertices $u\notin P$ such that $G[V(P)\cup\{u\}]$ is not isomorphic to $G[V(P)\cup\{u'\}]$ for any other $u'\notin P$ is at least $0.9n$;
\item there do not exist three different vertices $u,u',u''\notin P$ such that graphs $G[V(P)\cup\{u\}]$, $G[V(P)\cup\{u'\}]$, $G[V(P)\cup\{u''\}]$ are all isomorphic,
\item for every $u\in\{u_1,u_2,u_3\}$ and every vertex $u'\notin V(P)\cup\{u\}$, graphs $G[V(P)\cup\{u\}]$ and $G[V(P)\cup\{u'\}]$ are not isomorphic,
\item for any other induced $\ell$-path $P'$ in $G$, there exists a vertex $u\in\{u_1,u_2,u_3\}$ such that, for any vertex $u'\notin P'$, graphs $G[V(P)\cup\{u\}]$ and $G[V(P')\cup\{u'\}]$ are not isomorphic,
\item the graph $G[V(P)\cup\{u_1,u_2,u_3\}]$ is asymmetric;
\end{itemize}
\item for every $u\neq v$, $G$ has at least $0.2 n$ neighbours of $u$ that are not adjacent to $v$.
\end{itemize}

Let us now introduce an algorithm that successfully reconstructs from $\mathcal{D}$ a graph isomorphic to $G$. This algorithm is designed in a way such that it succeeds only when there is no other $G'\not\cong G$ with $\mathcal{D}(G')=\mathcal{D}$. Due to Claims~\ref{cl:neighborhood_difference},~\ref{cl:intersections_max_paths},~\ref{cl:colouring_paths}, it would immediately imply the 1-statement in Theorem~\ref{th:random_graphs_rec}. This algorithm does essentially the same steps as Algorithm A but the step A2 since we can not directly use colours. Thus, we replace this step with B2 that identifies $P$ via its witnesses $u_1,u_2,u_3$.\\

\noindent\textsc{Algorithm B}

\smallskip

\noindent\textsc{Input:} a deck $\mathcal{D}$ consisting of ${n\choose k}$ graphs of size $k$.
\begin{enumerate}
\item[B1] Find the maximum length $\ell$ of an induced path in $D$ over all $D\in\mathcal{D}$. If $\ell>k-9$, then reject.
\item[B2] {\it Identify a unique specific longest path.} Find a subdeck $\mathcal{D}_0\subset\mathcal{D}$ of size ${n-\ell-3\choose k-\ell-3}$ such that each $D\in\mathcal{D}_0$ contains an induced $\ell$-path $P=P(D)$ and three vertices $u^D_1,u^D_2,u^D_3\notin P$ satisfying the following requirements:

--- there are no induced $\ell$-paths other than $P(D)$ in every $D\in\mathcal{D}_0$,

--- for any two $D,D'\in\mathcal{D}_0$ and for every $i\in\{1,2,3\}$, vertices $u^D_i$ and $u^{D'}_i$ are identical with respect to $P(D)$ and $P(D')$ respectively,

--- for any $D\in\mathcal{D}_0$, every $u\in\{u^D_1,u^D_2,u^D_3\}$, and every $u'\in V(D)\setminus (V(P)\cup u)$, graphs $D[V(P)\cup u]$ and $D[V(P)\cup u']$ are not isomorphic,

--- for any $D\in\mathcal{D}_0$, the graph $D[V(P)\cup\{u^D_1,u^D_2,u^D_3\}]$ is asymmetric,
  
--- there is no $D'\in\mathcal{D}\setminus\mathcal{D}_0$ containing an induced $\ell$-path $P'$ and three vertices $u'_1,u'_2,u'_3\notin P'$ such that, for some $D\in\mathcal{D}_0$ and every $i\in\{1,2,3\}$, $u^D_i$ and $u^{D'}_i$ are identical with respect to $P(D)$ and $P(D')$ respectively,

--- there is no $D\in\mathcal{D}_0$ that contains three vertices with identical neighbourhoods in $P(D)$.

If $\mathcal{D}_0$ does not exist, then reject. Let $\tilde G:=G_0$ be a graph isomorphic to $D[V(P)\cup\{u^D_1,u^D_2,u^D_3\}]$.

\item[B3] Apply steps A3--A8 from Algorithm A with $P$ replaced by $G_0$ (no need to colour vertices).
\end{enumerate}

\section{Discussions}

The tight concentration result in Theorem~\ref{th:grids} could be possibly further improved if we allow other graphs in the deck. Note that the subgrids have $k^d$ vertices. It is easy to see that, for $k$ satisfying the requirement in the 0-statement in Theorem~\ref{th:grids}, it is still whp impossible to reconstruct the colouring from the full $k^d$-deck. In other words, subgraphs other than grids do not help much. On the other hand, the difference between the upper bound (that follows directly from Theorem~\ref{th:grids} since subgrids could be extracted from the full deck) and the lower bound for the reconstruction threshold from the full deck equals $\Theta(k^{d-1})$. It would be interesting to improve the concentration interval or to prove that this is not possible.

Though Theorem~\ref{th:random_graphs_colour} states that, for any asymmetric graph, whp its random colouring is not reconstructible from its full $k$-deck whenever $k\leq\sqrt{2\log_2 n}$, we can not show that this bound can not be improved to any other $o(\log n)$-bound. Also, it is not clear, whether, for any asymmetric $G$ (or at least for $G(n,1/2)$ with high probability), there exists a (sharp) threshold for a reconstruction of a uniformly random colouring of $G$. Same question could be asked for the graph reconstruction of a random graph. We shall note that, for $k\geq 2\log_2n$, whp there are isomorphism classes of graphs of size $k$ that are not presented in the full $k$-deck of $G(n,1/2)$ --- and this is a crucial observation for our argument to work. However, when $k\leq c\log_2 n$ for sufficiently small $c>0$ (we believe that any $c<2$ should be enough --- see, e.g.,~\cite{KSZ}), then any
isomorphism class of graphs of size $k$ has many representatives in the full $k$-deck (proportional to the index of the respective automorphism group in the symmetric group). Thus, our approach can not be used to show that the random graph is reconstructible for such $k$. Though we do not know whether the constant factor in the first order term in our upper bound for the (possible) threshold can be refined, the second order term can be certainly improved, and we did not try to optimise it.

The {\it $\ell$-reconstruction number} $f_G(\ell)$ of a graph $G$ is the minimum size of an $(n-\ell)$-deck $\mathcal{D}$ that is sufficient to reconstruct $G$, i.e. there is no other graph such that its full $(n-\ell)$-deck contains $\mathcal{D}$ as a submultiset. Bollob\'{a}s~\cite{Bol_reconstruction} proved that actually any three graphs from the full $(n-1)$-deck are enough to reconstruct $G_n\sim G(n,1/2)$ whp. The result of Bollob\'{a}s immediately implies that whp $f_{G_n}(1)=3$. %Spinoza and West~\cite{SW} generalised the result of Bollob\'{a}s by showing that $f_{G_n}(n-k)\leq{n-k+2\choose 2}$ whp for any $k\geq (1+\varepsilon)\frac{n}{2}$.
Our reconstruction algorithm requires the entire deck. It would be interesting to estimate the $\ell$-reconstruction number for $\ell=n-O(\log n)$. 

For a graph property $\mathcal{P}$ (i.e. set of graphs closed under isomorphism), let us say that a graph $G$ having the property $\mathcal{P}$ is {\it weakly $\mathcal{P}$-reconstructible from its (partial) $k$-deck $\mathcal{D}$}~(more details about the weak reconstruction can be found in \cite{Intro_BH}), if there is no graph $G'$ non-isomorphic to $G$ and having the property $\mathcal{P}$ such that the full $k$-deck of $G'$ contains $\mathcal{D}$ as a submultiset. Let $\varepsilon>0$ be a constant, and $k=k(n)\geq(1/2+\varepsilon)n$ be a sequence of integers. Consider the following property $\mathcal{P}$: a graph $G$ on $[n]$ has the property $\mathcal{P}$, if all induced subgraphs of $G$ on $k-1$ vertices are pairwise non-isomorphic and asymmetric. It is known~\cite{Muller} that whp $G_n\sim G(n,1/2)$ has the property $\mathcal{P}$. Spinoza and West~\cite{SW} proved that, for any graph $G$ having the property $\mathcal{P}$, its full $k$-deck contains a submultiset $\mathcal{D}=\mathcal{D}(G)$ consisting of only ${n-k+2\choose 2}$ graphs such that $G$ is weakly $\mathcal{P}$-reconstructible from this $\mathcal{D}$. Actually they introduced an efficient algorithm that outputs a graph isomorphic to $G\in\mathcal{P}$ provided with exactly the partial deck $\mathcal{D}(G)$. 
It can be shown that whp in $G_n$ any two disjoint induced $\ell_0$-paths, where $\ell_0$ is defined by \eqref{eq:ell_0_definition}, have unique adjacencies between them, implying that (we of course omit here some technicalities, but the proof strategy is very similar to the proof of Theorem~\ref{th:random_graphs_rec}) the above weak reconstruction result (but for a different property $\mathcal{P}$ such that $G_n\in\mathcal{P}$ whp) holds true for any $k\geq 4\log_2 n+6$ as well. We do not know whether the same is true for $k=2\log_2 n+O(1)$.

%Furthermore, Spinoza and West~\cite{SW} proved that, for $k\geq(1/2+\varepsilon)n$, there is an algorithm that whp reconstructs $G_n$ from a $k$-deck $\mathcal{D}(G_n)$ consisting of only ${n-k+2\choose 2}$ graphs. Note that it does not imply that $f_{G_n}(n-k)\leq{n-k+2\choose 2}$ since the algorithm uses the fact that the graph under reconstruction is random. More formally, there is a graph property $\mathcal{P}$ such that 1) whp $G_n\in\mathcal{P}$; 2) whp, for any graph $G\in\mathcal{P}\setminus[G_n]$ on $[n]$, its full $k$-deck does not contain $\mathcal{D}(G_n)$, where $[G_n]$ is the set of all graphs on $[n]$ isomorphic to $G_n$. It can be shown that whp in $G_n$ any two disjoint induced $\ell$-paths have unique adjacencies between them, implying that (we of course omit here some technicalities, but the proof strategy is very similar to the proof of Theorem~\ref{th:random_graphs_rec}) for any $k\geq 4\log_2 n+6$, whp there exists an algorithm that reconstructs $G_n$ from a $k$-deck $\mathcal{D}(G_n)$ consisting of only ${n-k+2\choose 2}$ graphs. We do not know whether the same is true for $k=2\log_2 n+O(1)$.

%It would be interesting to estimate the $\ell$-reconstruction number for $\ell=n-O(\log n)$.

\section*{Acknowledgements}

The authors would like to thank Viktor Zamaraev for helpful remarks and valuable comments on the paper.

\end{document}